\newcommand{\R}{\mathbb{R}}
\newcommand{\ol}{\overline}
\newtheorem{thm}{Theorem}
\newtheorem{lemma}[thm]{Lemma}
\newtheorem{prop}[thm]{Proposition}
\newtheorem{cor}[thm]{Corollary}
\newtheorem{assumption}[thm]{Assumption}
\theoremstyle{definition}
\newtheorem{definition}[thm]{Definition}
\theoremstyle{remark}
\newtheorem{remark}{Remark}
\newtheorem*{ack}{Acknowledgments}
\newtheorem*{outline}{Outline}
\DeclareMathOperator{\interior}{int}
\DeclareMathOperator{\Div}{div}
\DeclareMathOperator{\spt}{spt}
\DeclareMathOperator{\sing}{sing}
\def\madm{m_{\mathrm{ADM}}}
\def\miso{m_{\mathrm{iso}}}
\def\mh{m_{\mathrm{Haw}}}
\begin{document}

\title[Lower semicontinuity of mass under $C^0$ convergence]{Lower semicontinuity of mass under $C^0$ convergence and Huisken's isoperimetric mass}
\author{Jeffrey L. Jauregui}
\address{Dept. of Mathematics,
Union College, 807 Union St.,
Schenectady, NY 12308}
\email{jaureguj@union.edu}
\author{Dan A. Lee}
\address{Graduate Center and Queens College, City University of New York, 365 Fifth Avenue,
New York, NY 10016, USA}
\email{dan.lee@qc.cuny.edu}

\begin{abstract}
Given a sequence of asymptotically flat 3-manifolds of nonnegative scalar curvature with outermost minimal boundary, converging in the pointed $C^0$ Cheeger--Gromov sense to an asymptotically flat limit space, we show that the total mass of the limit is bounded above by the liminf of the total masses of the sequence. In other words, total mass is lower semicontinuous under such convergence. In order to prove this, we use Huisken's isoperimetric mass concept, together with a modified weak mean curvature flow argument. We include a brief discussion of Huisken's work before explaining our extension of that work. The results are all specific to three dimensions.
\end{abstract}

\maketitle

\section{Introduction}
In \cite{Jauregui}, the first-named author studied how the ADM mass behaves under geometric convergence of a sequence of asymptotically flat 3-manifolds, proving the following.

\begin{thm}[Lower semicontinuity of mass under $C^2$ convergence \cite{Jauregui}]\label{thm:Jauregui}
Let $(M_i, g_i, p_i)$ be a sequence of pointed smooth asymptotically flat\footnote{We define smooth asymptotic flatness for (one-ended) $3$-manifolds (possibly with boundary) in the usual way: outside a compact set, there is a coordinate chart for $M$ in which the smooth Riemannian metric $g$ satisfies $g_{ij}-\delta_{ij}=O_2(|x|^{-p})$ for some $p>\tfrac{1}{2}$. Here, $O_2$ means that the first two derivatives have appropriate decay. We also require that the scalar curvature is integrable so that the ADM mass is well-defined. Note that $g$ is automatically complete.} 3-manifolds without boundary, such that each $(M_i ,g_i)$ has nonnegative scalar curvature and contains no compact minimal surfaces.
Then if $(M_i, g_i, p_i)$ converges in the pointed $C^2$ Cheeger--Gromov sense to some pointed $C^2$ asymptotically flat 3-manifold $(N, h, q)$, then 
\[ \madm(N, h) \le \liminf_{i\to\infty} \madm(M_i, g_i).\]
\end{thm}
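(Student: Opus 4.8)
The plan is to route the argument through Huisken's isoperimetric mass $\miso$: unlike $\madm$, which is an asymptotic invariant with no obvious stability, $\miso$ is manufactured from volumes and perimeters of \emph{bounded} regions, and these vary continuously under pointed $C^2$ — indeed merely $C^0$ — Cheeger--Gromov convergence. So I would (i) produce a bounded region $\Omega\subset N$ whose isoperimetric quantity nearly recovers $\madm(N,h)$; (ii) transplant $\Omega$ into each $M_i$ by the Cheeger--Gromov diffeomorphisms, getting $\Omega_i\subset M_i$ with $\miso(\Omega_i)\to\miso(\Omega)$; and (iii) bound $\miso(\Omega_i)\le\madm(M_i,g_i)$ using nonnegative scalar curvature. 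Chaining these and sending the error to zero gives the theorem.

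The two substantive inputs, both essentially Huisken's, are: \textbf{the identity} $\miso=\madm$ on asymptotically flat $3$-manifolds with nonnegative scalar curvature, which I apply to $(N,h)$ — it inherits $\mathrm{scal}\ge 0$ since scalar curvature depends continuously on the $2$-jet of the metric and each $g_i$ has $\mathrm{scal}\ge 0$ — and, combined with the definition of $\miso$ as a supremum over exhaustions, this yields for each $\delta>0$ a bounded region $\Omega\subset N$ with regular enough boundary and $\miso(\Omega)\ge\madm(N,h)-\delta$; and \textbf{the monotonicity} $\miso(\Omega)\le\madm$ for any such region in an asymptotically flat $3$-manifold with nonnegative scalar curvature and no compact minimal surfaces, obtained by flowing $\partial\Omega$ outward by a (weak) mean-curvature-type flow — equivalently, comparing against the isoperimetric profile — and checking that the normalized isoperimetric deficit defining $\miso$ is nondecreasing along the flow, an estimate that consumes the scalar curvature hypothesis via the Gauss equation and is closed with Gauss--Bonnet.

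Granting these, the argument is short. Fix $\delta>0$ and $\Omega\subset N$ as above. For large $i$ the pointed $C^2$ convergence supplies diffeomorphisms $\phi_i$ from a neighborhood of $\ol\Omega$ onto open subsets of $M_i$ with $\phi_i^*g_i\to h$ in $C^2$ on $\ol\Omega$; set $\Omega_i:=\phi_i(\Omega)$. Since $\sqrt{\det(\phi_i^*g_i)}$ and the corresponding area density on the fixed hypersurface $\partial\Omega$ converge uniformly on $\ol\Omega$, we get $|\Omega_i|_{g_i}\to|\Omega|_h$ and $|\partial\Omega_i|_{g_i}\to|\partial\Omega|_h>0$, hence $\miso(\Omega_i)\to\miso(\Omega)$ by continuity of the defining formula. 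Applying the monotonicity inside $M_i$ gives $\miso(\Omega_i)\le\madm(M_i,g_i)$, so
\[
\liminf_{i\to\infty}\madm(M_i,g_i)\;\ge\;\liminf_{i\to\infty}\miso(\Omega_i)\;=\;\miso(\Omega)\;\ge\;\madm(N,h)-\delta,
\]
and $\delta\downarrow 0$ finishes the proof.

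The main obstacle is the monotonicity estimate and its weak formulation. One must single out the correct evolution of $\partial\Omega$ — the constant-mean-curvature boundaries of isoperimetric regions, moving with normal speed $\propto 1/H$ — and verify by direct computation that the normalized deficit is monotone, which is precisely where $\mathrm{scal}\ge 0$ (through the Gauss equation) and Gauss--Bonnet are needed; then one must make sense of the flow across its singular times and the jumps between isoperimetric regions, showing the monotone quantity survives the jumps; and one must exclude escape of volume to infinity, so that the limiting value of the monotone quantity is genuinely $\madm(M_i,g_i)$. Smaller points: fixing how regular $\partial\Omega$ must be, checking the $\phi_i$ preserve that regularity and the convergence of areas and volumes, and confirming that $\mathrm{scal}\ge 0$ — and, should Huisken's identity on $N$ require it, the absence of compact minimal surfaces, which for $N$ can be finessed by working outside the outermost one at the cost of a nonnegative correction — passes to the limit.
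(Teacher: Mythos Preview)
Your high-level strategy---route through $\miso$, transplant a near-optimal region $\Omega$ from $N$ into each $M_i$, and bound the transplanted quasilocal mass by $\madm(M_i,g_i)$---is exactly the skeleton the paper uses to prove its stronger $C^0$ result, Theorem~\ref{thm:main}, which together with Theorem~\ref{thm:iso-adm} recovers Theorem~\ref{thm:Jauregui} when the limit is smooth. (The paper does not reprove Theorem~\ref{thm:Jauregui} directly; it quotes the original argument from~\cite{Jauregui}, which used Hawking mass and Huisken--Ilmanen inverse mean curvature flow, an approach available under $C^2$ convergence because Hawking mass depends on second-order data.)

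The substantive gap is step~(iii). The bound $\miso(\Omega_i)\le\madm(M_i,g_i)$ is \emph{not} true for arbitrary bounded regions; the correct statement (Theorem~\ref{thm:iso-adm2}) is $\miso(\Omega)\le\madm+C/\sqrt{|\partial\Omega|}$ for \emph{outward-minimizing} $\Omega$ with sufficiently large boundary, where $C$ depends only on an ADM upper bound, a lower bound on the isoperimetric constant of $(\Omega,g)$, and an upper bound on the isoperimetric ratio of $\Omega$. Your transplants $\Omega_i=\phi_i(\Omega)$ are generally not outward-minimizing in $(M_i,g_i)$, so one must pass to the minimizing hull $\tilde\Omega_i$ and then prove that $\tilde\Omega_i$ stays inside $\phi_i(U)$ so that its isoperimetric constant is uniformly controlled by that of $(N,h)$ (Lemma~\ref{lemma:tentacle}); only then is the error term uniform in $i$, which is what the chain of inequalities needs. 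Also, the monotonicity mechanism you sketch is off: the flow in the paper is \emph{inward} mean curvature flow, not an outward speed-$1/H$ flow along isoperimetric boundaries, and the monotone quantity is $\phi_m(|\Sigma_t|)-|\Omega_t|$ with $\phi_m$ the Schwarzschild isoperimetric profile (Proposition~\ref{thm:huisken-monotonicity}). The Hawking-mass bound $\mh(\Sigma_t)\le m$ enters as a \emph{hypothesis} of that monotonicity, supplied by Theorem~\ref{thm:haw-adm}; the scalar-curvature and Gauss--Bonnet content you refer to lives inside that IMCF-based theorem, not in the MCF computation itself.
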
 

As pointed out in \cite{Jauregui}, such a result is not true without the hypothesis of nonnegative scalar curvature (or without the hypothesis of no compact minimal surfaces). Therefore this semicontinuity property is intimately connected to scalar curvature. In fact, the proof of Theorem \ref{thm:Jauregui} relies on  G. Huisken and T. Ilmanen's inverse mean curvature flow proof of the Riemannian Penrose equality \cite{Huisken-Ilmanen:2001} --- an argument that also suffices to prove the positive mass theorem \cites{Schoen-Yau:1979, Witten:1981}.  Indeed, it was also observed in \cite{Jauregui} that the positive mass theorem itself is an immediate consequence of Theorem \ref{thm:Jauregui}.

The purpose of the present work is to generalize Theorem \ref{thm:Jauregui} to allow for only $C^0$ Cheeger--Gromov convergence. Aside from independent interest, such a generalization is an important step forward if one wants to attack the Bartnik ``minimal mass extension'' conjecture \cites{Bartnik:1997,Bartnik:2002} using the direct method (see \cite{Jauregui}). Using only $C^0$ convergence as a hypothesis is significantly more difficult, because it essentially means that one cannot make any quantitative use of curvature. We find it natural to use Huisken's isoperimetric mass and quasilocal isoperimetric mass, since both only use $C^0$ data of the metric (area and volume, to be precise). Our main result is the following (which also generalizes Theorem \ref{thm:Jauregui} by allowing a minimal boundary). 

\begin{thm}[Lower semicontinuity of mass under $C^0$ convergence]\label{thm:main}
Let $(M_i, g_i, p_i)$ be a sequence of pointed smooth asymptotically flat 3-manifolds whose boundaries are empty or minimal, such that each 
$(M_i ,g_i)$ has nonnegative scalar curvature and contains no compact minimal surfaces in its interior. Then if $(M_i, g_i, p_i)$ converges in the pointed $C^0$ Cheeger--Gromov sense to some pointed $C^0$ asymptotically flat 3-manifold $(N, h, q)$, then 
\[ \miso(N, h) \le \liminf_{i\to\infty} \madm(M_i, g_i),\]
where $\miso$ is Huisken's isoperimetric mass.
\end{thm}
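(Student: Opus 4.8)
The plan is to reduce everything to Huisken's isoperimetric mass, exploiting that it depends only on $C^0$ data (areas and enclosed volumes), and to transport the ADM-mass information from the sequence to the limit via a weak inverse/mean curvature flow argument that survives the passage to $C^0$ convergence. Concretely, I would first recall Huisken's definition: for an exhaustion of $(N,h)$ by regions $\Omega$, one considers the quantity
\[
\frac{2}{|\partial\Omega|}\left(|\Omega| - \frac{|\partial\Omega|^{3/2}}{6\sqrt{\pi}}\right),
\]
and $\miso(N,h)$ is the $\limsup$ of this over a suitable exhaustion. The key structural fact I would invoke is that on a manifold of nonnegative scalar curvature with no compact minimal surfaces (and minimal boundary), Huisken's isoperimetric mass is bounded above by the ADM mass: $\miso(M_i,g_i)\le \madm(M_i,g_i)$. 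So it suffices to prove
\[
\miso(N,h)\le \liminf_{i\to\infty}\miso(M_i,g_i).
\]

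**Next**, I would try to establish that lower semicontinuity directly from the $C^0$ Cheeger--Gromov convergence. The point is that for a fixed large region $\Omega\subset N$ realizing (nearly) the supremal value in Huisken's functional, one pulls $\Omega$ back through the diffeomorphisms $\Phi_i\colon U_i\to M_i$ furnished by $C^0$ convergence, obtaining regions $\Omega_i = \Phi_i(\Omega)\subset M_i$. Because $\Phi_i^*g_i\to h$ in $C^0_{loc}$, the volumes $|\Omega_i|_{g_i}\to|\Omega|_h$ and the boundary areas $|\partial\Omega_i|_{g_i}\to|\partial\Omega|_h$ (the latter requires a little care: $C^0$ convergence of metrics gives convergence of areas of a fixed smooth hypersurface, but one must ensure $\partial\Omega$ is chosen so that no measure concentrates — choosing $\Omega$ with smooth boundary and using that areas of fixed smooth surfaces are continuous in the $C^0$ metric handles this). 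Hence Huisken's functional evaluated on $\Omega_i$ converges to its value on $\Omega$. Taking $\Omega$ to nearly realize $\miso(N,h)$ and then letting the approximation improve yields $\miso(N,h)\le\liminf_i\miso(M_i,g_i)$. The subtlety is that Huisken's mass is a $\limsup$ over exhaustions, so one cannot simply use one region; one must argue that the regions $\Omega_i$, while not exhausting $M_i$, still give lower bounds for $\miso(M_i,g_i)$ — this is where one needs that Huisken's quasilocal isoperimetric mass of a region is a lower bound for $\miso$, which requires controlling the behavior of the functional for regions that are ``large but not all of $M_i$.''

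**The main obstacle**, I expect, is precisely the inequality $\miso(M_i,g_i)\le\madm(M_i,g_i)$ in the generality needed here — namely, with minimal boundary allowed and with only the hypothesis of no compact minimal surfaces in the interior. Huisken's original argument for $\miso\le\madm$ uses a weak (inverse) mean curvature flow, and making this work with an outermost minimal boundary as an obstacle, and ensuring the flow does not run into interior minimal surfaces, is the technical heart. This is the ``modified weak mean curvature flow argument'' advertised in the abstract. I would handle it by running Huisken--Ilmanen's weak IMCF starting from (a small perturbation of) the minimal boundary, tracking the monotonicity of the relevant Geroch-type quantity, and extracting in the limit the comparison between the isoperimetric functional and $\madm$; the minimal boundary contributes favorably (it only helps the monotonicity, as in the Penrose inequality), while the no-interior-minimal-surfaces hypothesis guarantees the flow sweeps out a neighborhood of infinity without obstruction. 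A secondary obstacle is the area-convergence issue under merely $C^0$ metric convergence: one must be careful that the competitor regions $\Omega_i$ genuinely have boundary areas converging to $|\partial\Omega|_h$, which I would ensure by a good choice of $\Omega$ (e.g., a coordinate ball in the asymptotically flat chart, or a region bounded by a smooth surface transverse to the relevant structures) together with the elementary fact that, for a fixed compact smooth hypersurface, its area is continuous with respect to $C^0$ convergence of the ambient metric.
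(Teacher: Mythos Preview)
Your reduction to proving $\miso(N,h)\le \liminf_i \miso(M_i,g_i)$ and then using $\miso(M_i,g_i)\le \madm(M_i,g_i)$ has a genuine gap at exactly the point you flag as a ``subtlety.'' You need, for the transplanted region $\Omega_i=\Phi_i(\Omega)\subset M_i$, that $\miso(\Omega_i,g_i)\le \miso(M_i,g_i)$. You assert this follows from ``Huisken's quasilocal isoperimetric mass of a region is a lower bound for $\miso$,'' but that statement is false: the quasilocal isoperimetric mass of a single (even outward-minimizing) region can exceed the total isoperimetric mass. Indeed, the paper's quantitative estimate (Theorem~\ref{thm:iso-adm2}) only gives $\miso(\Omega)\le \madm(M,g)+C/\sqrt{|\partial\Omega|}$, with an error term that cannot in general be removed. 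So the chain $\miso(\Omega,h)\approx \miso(\Omega_i,g_i)\le \miso(M_i,g_i)\le \madm(M_i,g_i)$ breaks at the middle inequality, and no ``controlling the behaviour of the functional for regions that are large but not all of $M_i$'' will repair it without essentially reproving the hard part of the paper.

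The paper avoids $\miso(M_i,g_i)$ altogether. The real content is the \emph{uniform} quantitative bound $\miso(\tilde\Omega_i,g_i)\le \madm(M_i,g_i)+C/\sqrt{|\partial\tilde\Omega_i|}$, where the constant $C$ depends only on an upper bound for $\madm$, a lower bound for the isoperimetric constant of $(\tilde\Omega_i,g_i)$, and an upper bound for its isoperimetric ratio; all of these are $C^0$ quantities controllable via the limit $(N,h)$, so $C$ is uniform in $i$. Proving this is the ``modified weak mean curvature flow'' argument: note that it is mean curvature flow (inward, with Huisken's relative volume monotonicity), \emph{not} inverse mean curvature flow as you suggest; Huisken--Ilmanen's IMCF enters only as the black-box bound $\mh(\Sigma)\le \madm$. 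The serious work is handling disconnection of the weak flow (cured by convexity of the Schwarzschild profile $\phi_m$) and components of area below $36\pi m^2$ (cured by ``freezing'' them), and then separately showing that the minimizing hull $\tilde\Omega_i$ does not escape the region where the $C^0$ comparison is available. None of this appears in your outline.
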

The definitions of isoperimetric mass, $C^0$ asymptotic flatness, and $C^0$ Cheeger--Gromov convergence appear in the next section.

Note that this theorem allows for the possibility that the limit metric $h$ is not smooth. However, if $h$ happens to be smooth\footnote{In this case $h$ itself has nonnegative scalar curvature, since it is a $C^0$ limit of smooth metrics of nonnegative scalar curvature  \cites{Gromov:2014, Bamler:2015}.} and asymptotically flat, then $\miso$ can be replaced by $\madm$ in Theorem \ref{thm:main}, thanks to the following theorem.

\begin{thm}\label{thm:iso-adm}
If $(M,g)$ is a smooth asymptotically flat 3-manifold and has nonnegative scalar curvature, then 
\[ \miso(M,g)=\madm(M,g).\]
\end{thm}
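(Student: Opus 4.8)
Following Huisken, the plan is to establish $\miso(M,g)\ge\madm(M,g)$ and $\miso(M,g)\le\madm(M,g)$ separately; only the second uses $R\ge 0$. For the lower bound it suffices to produce a single exhaustion of $M$ along which the isoperimetric ratio converges to $\madm(M,g)$, since $\miso$ is a supremum over exhaustions. I would use the coordinate balls $B_r=\{|x|<r\}$ of an asymptotically flat chart and expand $|B_r|$ and $|\partial B_r|$ to the relevant order; when $1/2<p\le 1$ the required cancellations are exactly those forced by integrability of the scalar curvature (the same ingredients that make $\madm$ well defined and chart independent), and one obtains
\[
\lim_{r\to\infty}\frac{2}{|\partial B_r|}\left(|B_r|-\frac{|\partial B_r|^{3/2}}{6\sqrt{\pi}}\right)=\madm(M,g).
\]
This ``large sphere limit'' holds for every asymptotically flat $3$-manifold with no curvature hypothesis, and I would either carry it out directly or quote it from the literature.

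The real content is the reverse inequality. For fixed volume $V$ the isoperimetric region has least boundary area, and $t\mapsto\frac{2}{t}\bigl(V-\frac{1}{6\sqrt{\pi}}t^{3/2}\bigr)$ is decreasing, so every exhaustion is dominated by the isoperimetric profile $I(V)=\inf\{\,|\partial\Omega|:|\Omega|=V\,\}$; thus it is enough to bound $\frac{2}{I(V)}\bigl(V-\frac{1}{6\sqrt{\pi}}I(V)^{3/2}\bigr)$ from above as $V\to\infty$. By the structure theory of large isoperimetric regions in asymptotically flat $3$-manifolds, for all large $V$ the infimum is attained by a region $\Omega_V$ whose boundary $\Sigma_V$ is a connected, strictly mean-convex, nearly round sphere; it is outward-minimizing and encloses every compact minimal surface of $M$. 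Running the weak inverse mean curvature flow of \cite{Huisken-Ilmanen:2001} outward from $\Sigma_V$ and using $R\ge 0$, the Hawking mass is nondecreasing from $t=0$ with $\liminf_{t\to\infty}\mh(\Sigma_t)\le\madm(M,g)$, so $\mh(\Sigma_V)\le\madm(M,g)$. Since $\Sigma_V$ has constant mean curvature $H_V$, this reads $H_V^{\,2}\,I(V)\ge 16\pi-64\pi^{3/2}\,\madm(M,g)\,I(V)^{-1/2}$.

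It remains to run an ODE argument. The first variation of area identifies $H_V$ with $I'(V)$, which exists almost everywhere since $I$ is locally Lipschitz, and $I$ is absolutely continuous, so the previous inequality is a differential inequality for $I$. With $w=I^{3/2}$ it becomes $(w')^2\ge 36\pi-144\pi^{3/2}\madm\,w^{-1/3}$, hence $w'\ge 6\sqrt{\pi}-12\pi\madm\,w^{-1/3}+o(w^{-1/3})$ for large $V$. A short bootstrap---first the crude bound $w(V)\ge cV$, then reinserting it under the integral of the error term---gives
\[
I(V)^{3/2}=w(V)\ \ge\ 6\sqrt{\pi}\,V-18\pi\,\madm\,(6\sqrt{\pi})^{-1/3}\,V^{2/3}+o(V^{2/3}),
\]
and correspondingly $I(V)\ge(6\sqrt{\pi}\,V)^{2/3}(1+o(1))$. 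Feeding both estimates into $\frac{2}{I(V)}\bigl(V-\frac{1}{6\sqrt{\pi}}w(V)\bigr)$, the leading contributions cancel and the ratio tends to exactly $\madm(M,g)$; with the lower bound this yields $\miso=\madm$.

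The step I expect to be the main obstacle is the reduction to isoperimetric regions together with the structure theory it needs: that isoperimetric regions of all large volume exist (no volume escapes to infinity under the decay at hand), that their boundaries are connected outward-minimizing spheres, and that $I$ is regular enough---absolutely continuous, with $I'=H_V$ almost everywhere---to integrate the differential inequality. A secondary technical point is that the weak flow of \cite{Huisken-Ilmanen:2001} must be applied in a setting permitting compact minimal surfaces in the interior of $M$, since this theorem does not exclude them; this is harmless because $\Sigma_V$ encloses all of them and the flow moves only outward.
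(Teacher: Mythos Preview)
Your argument is essentially correct, but it takes a genuinely different route from the paper's. For the inequality $\miso\ge\madm$ both you and the paper appeal to the large-coordinate-sphere computation (the paper cites Fan--Shi--Tam via Miao; you propose to do it directly). The substantive difference is in the reverse inequality. You reduce to the isoperimetric profile $I(V)$, invoke the structure theory of large isoperimetric regions (existence, connectedness, outward-minimizing, CMC boundary), bound the Hawking mass of $\Sigma_V$ by $\madm$ via inverse mean curvature flow, and then run an ODE comparison for $I$ using $I'(V)=H_V$. The paper instead runs mean curvature flow \emph{inward} from an arbitrary large outward-minimizing region, uses Huisken's relative volume monotonicity $\frac{d}{dt}\bigl(\phi_m(|\Sigma_t|)-|\Omega_t|\bigr)\le 0$ with the Schwarzschild profile $\phi_m$, and develops a modified weak level-set flow that freezes small components to handle disconnection; the asymptotics of $\phi_m$ (Lemma~\ref{lemma_lim_phi_m}) then give the bound.

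What each approach buys: your route is arguably shorter for this particular theorem, trading the weak-MCF machinery for the (substantial) black box of isoperimetric-region structure theory in asymptotically flat manifolds. The paper's route, by contrast, yields the quantitative refinement Theorem~\ref{thm:iso-adm2}, in which the error $\miso(\Omega)-\madm$ is controlled by a constant depending only on coarse $C^0$ data (an isoperimetric-constant lower bound, an isoperimetric-ratio upper bound, and a mass upper bound). That uniformity is precisely what is needed for the paper's main lower-semicontinuity result under $C^0$ Cheeger--Gromov convergence, and your ODE argument---which requires isoperimetric minimizers in each specific $(M,g)$---does not obviously deliver it. You correctly flag the two places your argument leans hardest on outside input: the existence and structure of large-volume isoperimetric regions (including absolute continuity of $I$ and $I'=H_V$ a.e.), and the applicability of the Huisken--Ilmanen Hawking-mass bound when interior minimal surfaces are present.
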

Huisken announced this theorem when he first introduced his isoperimetric mass concept (see, \textit{e.g.}, \cite{Huisken:Morse}). P.\ Miao observed that $\miso(M,g)\ge \madm(M,g)$ follows from volume estimates of X.-Q.\ Fan, Y.\ Shi, and L.-F.\ Tam \cite{Fan-Shi-Tam:2009}\footnote{Miao's result showed equality, but for a different definition of the isoperimetric mass that restricts to coordinate balls.}.  A proof of the reverse inequality, following Huisken's approach, appears in this paper. 
In fact, we prove a stronger statement, Theorem \ref{thm:iso-adm2}, which is the main idea driving the proof of Theorem \ref{thm:main}. 

\begin{outline} Section \ref{sec:def} contains the basic definitions used in the paper. Section \ref{sec:isop} presents ideas of Huisken on how the isoperimetric profile function $\phi_m$ of Schwarzschild space motivates his definition of isoperimetric mass. It also includes a proof of Huisken's monotonicity result for $\phi_m$ under mean curvature flow. In Section \ref{sec:outline} we outline our broad approach to proving Theorem \ref{thm:main}, which naturally leads to a discussion of the key difficulties, including the issue of a mean curvature flow becoming singular and/or disconnected. In Section \ref{sec:convexity} we prove some additional properties of $\phi_m$ which are used later to show that Huisken's monotonicity result holds even for a disconnected flow. Our argument requires a weak version of mean curvature flow, which must be further modified to freeze any connected components of sufficiently small perimeter; this formalism is discussed in Sections \ref{sec:weak_flow} and \ref{sec:mod_flow_prop}. In the final two sections we prove a result of independent interest (Theorem \ref{thm:iso-adm2}), and finally we use this result to prove our main result (Theorem \ref{thm:main}).
\end{outline}

\begin{ack}
The authors thank Gerhard Huisken and Felix Schulze for helpful discussions.
\end{ack}

\section{Some definitions and notation}
\label{sec:def}

\begin{definition} A pair $(M,g)$ is a  \emph{$C^0$ asymptotically flat 3-manifold} if $M$ is a smooth 3-manifold (possibly with boundary), $g$ is a continuous Riemannian metric on $M$, and the following property holds: there exists a compact set $K \subset M$ and a diffeomorphism $\Phi: M \smallsetminus K \longrightarrow \R^3 \smallsetminus B$, for a closed ball $B$, such that in the coordinates $(x^i)$ determined by $\Phi$,
$$|g_{ij} - \delta_{ij}| = O(|x|^{-p}),$$
for some constant $p > 0$, where $|x| = \sqrt{(x^1)^2 + (x^2)^2 + (x^3)^2}$.
\end{definition}

Note that continuous metrics allow one to define Hausdorff $k$-measure (and in particular volumes and perimeters of regions) in the usual way. We use the notation $|\cdot |_g$ to denote both the area (2-dimensional Hausdorff measure) and volume (3-dimensional Hausdorff measure), with the meaning to be inferred from the context. When the metric is understood, we may omit the subscript. 

For now, let us assume that $M$ has no boundary.  Given a bounded open set $\Omega$ in $M$, we define $\partial \Omega$ to be its topological boundary, and $\partial^* \Omega \subset \partial \Omega$ to be its \emph{reduced boundary}\footnote{See \cite{Ambrosio-et-al:2000}, for instance,  for definitions of reduced boundary and perimeter.}. If $\Omega$ is mildly regular, \textit{e.g.} $C^1$, then $\partial^* \Omega = \partial \Omega$.  We will typically assume that $\Omega$ has finite perimeter, so that $|\partial^* \Omega|_g$ is the \emph{perimeter} of $\Omega$. 
We say that $\Omega$ is \emph{outward-minimizing} if it minimizes perimeter among all bounded open sets $\Omega' \supset \Omega$ of finite perimeter in $M$, that is, if
$|\partial^* \Omega'|_g \geq |\partial^* \Omega|_g$
for all such $\Omega'$. 

\begin{thm}[Regularity Theorem 1.3(iii) of \cite{Huisken-Ilmanen:2001}]
\label{regularity-thm}
Let $\Omega$ be a bounded open subset of a smooth asymptotically flat 3-manifold $(M,g)$ without boundary such that  $\partial \Omega$ is $C^2$. Then there
 exists an outward-minimizing open set $\tilde \Omega \supset \Omega$ such that $\partial \tilde\Omega$ is $C^{1,1}$ and $\tilde\Omega$ has the least possible perimeter among all regions containing $\Omega$. In fact, $\tilde \Omega$ can be taken as the intersection of all outward-minimizing sets that contain $\Omega$. Moreover, $\partial \tilde \Omega \smallsetminus \partial \Omega$, if nonempty, is a smooth minimal surface.
\end{thm}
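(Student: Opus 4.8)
The plan is to realize $\tilde\Omega$ as a perimeter minimizer subject to the constraint of containing $\Omega$, and then to read off its stated properties from the regularity theory of area-minimizing hypersurfaces together with the one-sided obstacle problem, both in the favorable ambient dimension three.

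\textbf{Existence.} First I would work in the asymptotically flat chart, fix a large coordinate ball $B_R \supset \ol\Omega$, and minimize $E \mapsto |\partial^* E|_g$ over the (nonempty, $L^1_{loc}$-closed) class of finite-perimeter sets with $\Omega \subseteq E \subseteq B_R$. Lower semicontinuity of perimeter and the direct method produce a minimizer, and after passing to the representative consisting of points of density one we may take it open and literally containing $\Omega$. To remove the artificial cap $E \subseteq B_R$, I would use asymptotic flatness: large coordinate spheres have nearly Euclidean area, so for $R$ large a minimizer cannot meet $\partial B_R$, since truncating any competitor at a suitable large coordinate sphere does not increase its perimeter. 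Hence $\tilde\Omega$ in fact minimizes perimeter among all bounded finite-perimeter sets containing $\Omega$, and lies in a fixed compact set.

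\textbf{Outward-minimizing property and intersection characterization.} The outward-minimizing property is then immediate: any bounded open finite-perimeter $\Omega' \supseteq \tilde\Omega$ also contains $\Omega$, so $|\partial^*\Omega'|_g \ge |\partial^*\tilde\Omega|_g$ by minimality. For the identification of $\tilde\Omega$ with the intersection $\hat\Omega$ of all outward-minimizing sets containing $\Omega$ (realized up to measure zero by a countable subfamily), I would invoke submodularity of perimeter, $|\partial^*(A \cup B)|_g + |\partial^*(A \cap B)|_g \le |\partial^* A|_g + |\partial^* B|_g$: since $A \cup B \supseteq B$ forces $|\partial^*(A \cup B)|_g \ge |\partial^* B|_g$ when $B$ is outward-minimizing, an induction shows that finite intersections of such sets have perimeter no larger than that of any of the factors; taking $\tilde\Omega$ to be one factor and passing to the $L^1$ limit by lower semicontinuity yields $|\partial^*\hat\Omega|_g \le |\partial^*\tilde\Omega|_g$. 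Thus $\hat\Omega$ is itself a perimeter minimizer among regions containing $\Omega$, hence outward-minimizing, hence the smallest outward-minimizing set containing $\Omega$; I would then rename $\tilde\Omega := \hat\Omega$.

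\textbf{Regularity.} Away from $\partial\Omega$ the obstacle $\ol\Omega$ sits at a positive distance, so $\tilde\Omega$ is a genuine two-sided local perimeter minimizer there; its reduced boundary is stationary, hence minimal, and by the De Giorgi--Federer--Almgren interior regularity theory --- with the ambient dimension $3$ forcing the potential singular set to be empty --- $\partial\tilde\Omega \smallsetminus \partial\Omega$ is a smooth embedded minimal surface. Along $\partial\Omega$ one faces a one-sided obstacle problem with a $C^2$ obstacle; here I expect $C^{1,1}$ to be precisely the optimal regularity, because the mean curvature of $\partial\tilde\Omega$ jumps from that of $\partial\Omega$ on the contact set to $0$ on the free part, which excludes $C^2$ but is consistent with a bound on the second fundamental form, and the standard obstacle-problem argument --- comparing from above with the obstacle and from below with solutions of the minimal surface equation --- shows $\partial\tilde\Omega$ is a $C^{1,1}$ graph over $\partial\Omega$ near contact points. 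This boundary analysis, establishing $C^{1,1}$ regularity and controlling the interface where the free minimal part detaches from $\partial\Omega$, is the step I expect to be the main obstacle; the $C^2$ hypothesis on $\partial\Omega$ and the low dimension both enter here, whereas the rest of the argument is soft compactness plus the submodularity inequality.
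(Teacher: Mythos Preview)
The paper does not prove this statement; it is quoted verbatim as a black box from \cite{Huisken-Ilmanen:2001} and used without further justification. So there is no in-paper proof to compare against.

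Your sketch is nonetheless essentially the argument Huisken--Ilmanen give in that reference: existence via the direct method in a large ball, the intersection characterization via submodularity of perimeter (their inequality (1.2) and the discussion following it), and the regularity split into the free part (smooth minimal by codimension-one regularity in ambient dimension $\le 7$) and the coincidence part ($C^{1,1}$ via the one-sided obstacle problem with a $C^2$ obstacle). Your identification of the obstacle regularity as the main technical step is accurate; Huisken--Ilmanen themselves outsource it. One point to tighten: the removal of the artificial outer barrier $B_R$ is cleanest not by a truncation/competitor argument but by noting that large coordinate spheres in a smooth asymptotically flat manifold are strictly mean-convex and hence act as barriers for the minimal free boundary via the strong maximum principle; your truncation phrasing would need a coarea/volume argument to be made precise.
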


Such $\tilde \Omega$ is called the \emph{minimizing hull} of $\Omega$.  Theorem \ref{regularity-thm} requires smoothness, but we will only need to take minimizing hulls with respect to smooth metrics.

For technical reasons it will be convenient to work in manifolds without boundary. If $(M^3,g)$ is a $C^0$ asymptotically flat manifold with  boundary, we can ``fill in'' the components of $\partial M$ by gluing smooth, connected 3-manifolds $W_1, \ldots, W_n$ along their boundaries to the components of $\partial M$,  in such a way that $M^{\mathrm{fill}} = M \cup W_1 \cup \ldots \cup W_n$ is a smooth, connected manifold without boundary, and $W= W_1 \cup \ldots \cup W_n$ is an open set in $M^{\mathrm{fill}}$ with compact closure.  We also extend the metric $g$ continuously to all of $M^{\mathrm{fill}}$ (and smoothly if $g$ is smooth on $M$).  We refer to $W$ as the \emph{fill-in region} and $M^{\mathrm{fill}}$ as the \emph{filled-in} manifold of $M$. (If $\partial M$ is empty, let $M^{\mathrm{fill}} = M$.) Note that this is the same procedure used in \cite[Section 6]{Huisken-Ilmanen:2001}, and that the choice of fill-in is not important.

\begin{definition} 
A bounded open set of finite perimeter $\Omega$ in $M^{\mathrm{fill}}$ is called an \emph{allowable region in $M$} if each component of the fill-in region $W$ is either contained in $\Omega$ or disjoint from $\Omega$.
\end{definition}
In particular, if $\Omega$ is allowable in $M$, then $\partial \Omega$ is a subset of $M$. Note that the minimizing hull of an allowable region need not be allowable. For an illustration, see Figure \ref{fig:allowable}.
\begin{figure}[ht]
\begin{center}
\includegraphics[scale=0.60]{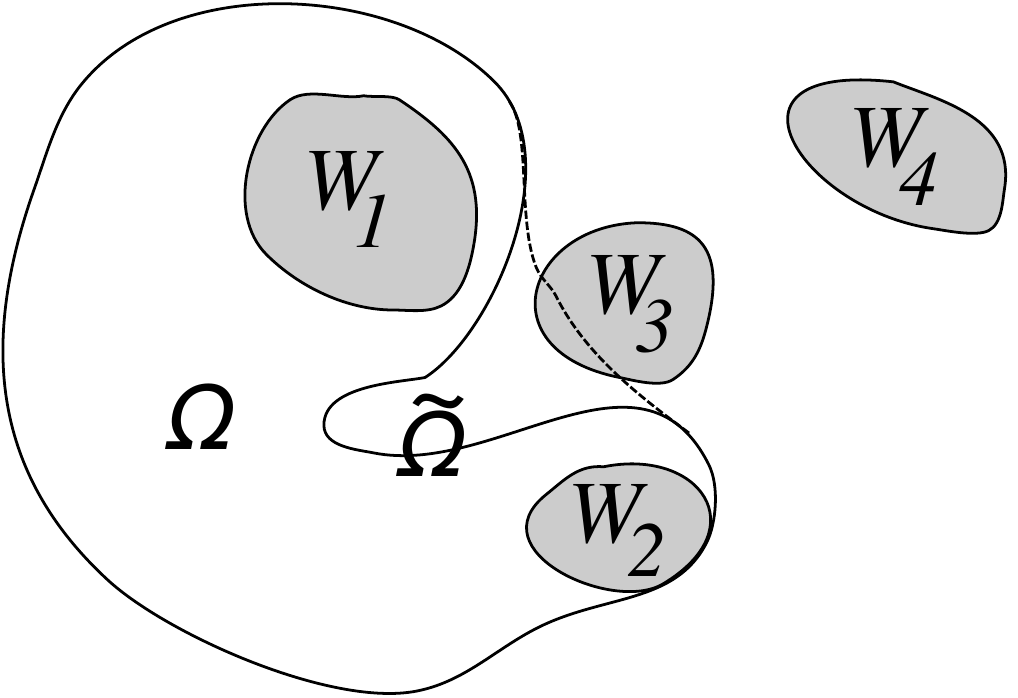}
\caption{\small Above, $W_1$, $W_2$, $W_3$, and $W_4$ represent the fill-in regions, whose boundaries comprise $\partial M$. The open set $\Omega$ is allowable, because it contains or is disjoint from each of the fill-in regions. The minimizing hull, $\tilde \Omega$, is not an allowable region, because it cuts through $W_3$.
 \label{fig:allowable}}
\end{center}
\end{figure}

We define the volume $|\Omega|$ of an allowable region $\Omega$ to be the Hausdorff 3-measure in $M$ of $\Omega \smallsetminus W$, so as to neglect to contribution from the fill-in components (and make the definition independent of the choice of fill-in). The \emph{isoperimetric ratio} of an allowable region $\Omega$ is $\frac{|\partial^* \Omega|^{3/2}}{ |\Omega|}$.  The \emph{isoperimetric constant of a manifold}, $c=c(M,g)$, is the infimum of the isoperimetric ratios over all allowable regions in $M$. For instance, $\R^3$ has isoperimetric constant $6\sqrt{\pi}$. In general, the isoperimetric inequality states that $|\partial^* \Omega|^{3/2} \geq c |\Omega|$ for every allowable region $\Omega$.

Some facts immediately follow from the definition of $C^0$ asymptotic flatness.
\begin{lemma}
\label{lemma_AF_asymptotics}
Suppose $(M,g)$ is a $C^0$ asymptotically flat 3-manifold. Let $S_r$ denote the coordinate sphere in $M$ defined by $|x|=r$, and let $B_r$ be the bounded open set
with $\partial B_r = S_r$. Then
\begin{enumerate}
\item $\displaystyle\lim_{r \to \infty} \frac{|S_r|_g}{4\pi r^2} = 1$.
\item $\displaystyle\lim_{r \to \infty} \frac{|S_r|_g^{3/2}}{|B_r|_g} = 6 \sqrt{\pi}$.
\end{enumerate}
\end{lemma}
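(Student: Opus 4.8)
The plan is to reduce everything to the asymptotic control $|g_{ij}-\delta_{ij}| = O(|x|^{-p})$ available in the end coordinates, comparing the $g$-area of coordinate spheres and the $g$-volume of coordinate balls against their Euclidean counterparts, for which the stated limits are classical. First I would fix a compact set $K$ and diffeomorphism $\Phi: M\smallsetminus K \to \R^3\smallsetminus B$ as in the definition of $C^0$ asymptotic flatness, so that for $r$ large the sphere $S_r$ and the ``annular'' part of $B_r$ lie entirely in the chart. Set $\varepsilon(r) = \sup_{|x|\ge r}|g_{ij}(x)-\delta_{ij}|$, which tends to $0$ as $r\to\infty$; at a point with $|x|\ge r$, every eigenvalue of $g$ relative to $\delta$ lies in $[1-C\varepsilon(r), 1+C\varepsilon(r)]$ for a dimensional constant $C$.

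For part (1): the $g$-area element on $S_r$ and the Euclidean area element differ by a factor bounded between $(1-C\varepsilon(r))^{1/2}$ and $(1+C\varepsilon(r))^{(n-1)/2}$ (here $n-1 = 2$), since the area form of the restricted metric $g|_{S_r}$ versus $\delta|_{S_r}$ is controlled by the eigenvalue pinching of $g$ itself. Integrating over $S_r$ and using $|S_r|_\delta = 4\pi r^2$ gives
\[
(1-C\varepsilon(r))\,4\pi r^2 \;\le\; |S_r|_g \;\le\; (1+C\varepsilon(r))\,4\pi r^2
\]
for $r$ large (after adjusting constants). Dividing by $4\pi r^2$ and letting $r\to\infty$ yields the first limit. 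For part (2), I would split $B_r = (B_r\cap K') \cup (B_r\smallsetminus K')$ for a fixed large compact $K'$ containing $K$: the first piece has $g$-volume bounded by a constant independent of $r$, while on the second piece the $g$-volume element and Euclidean volume element differ by a factor in $[(1-C\varepsilon(r_0))^{n/2}, (1+C\varepsilon(r_0))^{n/2}]$ where $r_0$ is the coordinate radius of $K'$. Since $|B_r|_\delta = \tfrac{4}{3}\pi r^3$, this gives $|B_r|_g = \tfrac{4}{3}\pi r^3 (1+o(1))$ as $r\to\infty$, and combining with part (1),
\[
\frac{|S_r|_g^{3/2}}{|B_r|_g} \;=\; \frac{(4\pi r^2)^{3/2}(1+o(1))}{\tfrac{4}{3}\pi r^3(1+o(1))} \;\longrightarrow\; \frac{(4\pi)^{3/2}}{\tfrac{4}{3}\pi} \;=\; 6\sqrt{\pi}.
\]

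I do not expect a serious obstacle here; the only mild subtlety is that $g$ is merely continuous (not $C^1$), so one cannot invoke coarea or first-variation-type identities, and must argue purely by comparing volume/area elements pointwise via eigenvalue pinching — but this is exactly the kind of estimate that continuity of $g$ and the decay $O(|x|^{-p})$ supply. A second minor point is bookkeeping of the contribution of the compact core $K$ (and, in the boundary case, any fill-in), which is harmless since it is a fixed bounded quantity that washes out after dividing by $r^2$ or $r^3$. One should also note $p>0$ is all that is used; no integrability or faster decay is needed for these two limits.
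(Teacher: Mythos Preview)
Your argument is correct and is exactly the kind of elementary pointwise comparison the paper has in mind; the paper itself gives no proof, stating only that the lemma ``immediately follows from the definition of $C^0$ asymptotic flatness.'' One small point worth tightening: as written, your volume bound for $B_r \smallsetminus K'$ carries the \emph{fixed} error factor $(1\pm C\varepsilon(r_0))^{3/2}$ with $r_0$ determined by $K'$, which does not by itself give $|B_r|_g = \tfrac{4}{3}\pi r^3(1+o(1))$. To close the argument you need the standard extra step of first choosing $r_0$ large (depending on a given $\epsilon>0$) so that $C\varepsilon(r_0)<\epsilon$, then letting $r\to\infty$, and finally $\epsilon\to 0$. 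This is routine and does not affect the substance of your proof.
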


\begin{lemma}\label{sop-lower-bound}
The isoperimetric constant of a $C^0$ asymptotically flat 3-manifold $(M,g)$ is strictly positive.
\end{lemma}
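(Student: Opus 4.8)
The plan is to produce one constant $c>0$ with $|\partial^*\Omega|_g^{3/2}\ge c\,|\Omega|$ for every allowable region $\Omega$, by controlling three ranges of sizes and letting $c$ be the least of the three constants so obtained. I would begin by fixing the chart at infinity and choosing $R_1$ large enough that $g$ is $\lambda$-bi-Lipschitz to the Euclidean metric $\delta$ on $U:=\{|x|>R_1\}$, which $C^0$ asymptotic flatness permits; then $K:=M\smallsetminus U$ is compact and connected with $V_K:=|K|_g<\infty$, and I would work in $M^{\mathrm{fill}}$, where the compact fill-in $W$ becomes part of the core.

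Large regions are where asymptotic flatness enters. Given a bounded set of finite perimeter $\Omega$ in $M^{\mathrm{fill}}$, push $\Omega\cap U$ forward by $\Phi$ to a bounded set $E\subset\R^3\smallsetminus\overline{B_{R_1}}$ and apply the Euclidean isoperimetric inequality to $E\cup\overline{B_{R_1}}$. Since $\Omega$ and $\Omega\cap U$ agree inside $U$, the reduced boundary of $E\cup\overline{B_{R_1}}$ meets $\{|x|>R_1\}$ exactly in the image of $\partial^*\Omega\cap U$, of Euclidean area at most $\lambda\,|\partial^*\Omega|_g$, and meets $S_{R_1}$ in a set of area at most $|S_{R_1}|_\delta$. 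Comparing volumes by the bi-Lipschitz bound and absorbing the resulting additive constant into the core term gives $|\Omega|=|\Omega\smallsetminus W|_g\le V'+C\,|\partial^*\Omega|_g^{3/2}$ for suitable $V',C$ and every allowable $\Omega$; in particular the isoperimetric ratio is at least $1/(2C)$ whenever $|\partial^*\Omega|_g^{3/2}\ge V'/C$, which covers all $\Omega$ with $|\Omega|\ge 2V'$.

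For small regions I would use a uniform local isoperimetric inequality. Continuity of $g$ gives near each point a chart $2$-bi-Lipschitz to a Euclidean ball, and compactness of $K\cup\overline W$ together with the explicit comparison on $U$ lets these be taken of a fixed size; a covering argument, using the Euclidean isoperimetric inequality in each chart and subadditivity of $t\mapsto t^{2/3}$, then yields $v_0,c_1>0$ with $|\partial^*\Omega|_g^{3/2}\ge c_1\,|\Omega|_g$ whenever $|\Omega|\le v_0$.

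The intermediate range $v_0<|\Omega|<2V'$ is the crux, and I expect it to be the main obstacle. For such $\Omega$ the large-region estimate forces $|\Omega\cap U|_g$ to be small, so fixing a compact connected $\tilde K\subset M$ obtained from $K$ by attaching a collar in $U$, the set $F:=\Omega\cap\interior\tilde K$ has $\mathrm{Per}_g(F;\interior\tilde K)\le|\partial^*\Omega|_g$ while its $g$-volume is trapped in a compact subinterval of $(0,|\tilde K|_g)$: bounded below by roughly $v_0/2$ and bounded above strictly away from $|\tilde K|_g$. Were the isoperimetric ratio not bounded below on this class, a sequence of such $F$ with perimeters tending to zero would, by $BV$ compactness on the bounded set $\interior\tilde K$ (where $g$ is bi-Lipschitz to a smooth reference metric), converge in $L^1$ to a set of zero perimeter, hence null or conull in the connected $\interior\tilde K$, contradicting the volume bounds; thus $|\partial^*\Omega|_g\ge p_0>0$ there, and since $|\Omega|<2V'$ the ratio exceeds $p_0^{3/2}/(2V')$. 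The essential point is that this only becomes a compactness problem on a fixed compact piece because the large-region estimate---the sole use of asymptotic flatness---confines every intermediate region there; nothing a priori prevents a region from filling a definite fraction of the core with arbitrarily small perimeter. Taking $c=\min\{1/(2C),\,c_1,\,p_0^{3/2}/(2V')\}$ finishes the argument.
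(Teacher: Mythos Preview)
Your sketch is essentially correct, but it takes a markedly different route from the paper's proof. The paper disposes of the lemma in two lines: extend $g$ to $M^{\mathrm{fill}}$, observe that $g$ is uniformly equivalent to a \emph{smooth} metric that is Euclidean outside a compact set, and invoke Croke's isoperimetric estimates for such smooth complete manifolds. No size decomposition, no BV compactness, no covering argument.

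What you gain by your approach is self-containment: you use only the Euclidean isoperimetric inequality, a finite bi-Lipschitz cover, and BV compactness on a bounded domain, never appealing to a black-box result about smooth manifolds. What you lose is brevity, and several of the steps you compress deserve care. In particular, in the intermediate range your assertion that ``the large-region estimate forces $|\Omega\cap U|_g$ to be small'' is correct in the contradiction argument (where $|\partial^*\Omega|_g\to 0$), but only because of the \emph{sharp} constant in the Euclidean isoperimetric inequality: when you apply it to $E\cup\overline{B_{R_1}}$, the leading term $\tfrac{1}{6\sqrt{\pi}}|S_{R_1}|_\delta^{3/2}$ equals $|B_{R_1}|_\delta$ exactly, so that $|E|_\delta\le C\,|\partial^*\Omega|_g + o(|\partial^*\Omega|_g)$ rather than merely bounded. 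Without this cancellation the lower bound on $|F|_g$ would fail, so it is worth making explicit. Likewise, your covering step for small regions works, with the subadditivity $(\sum a_j)^{2/3}\le\sum a_j^{2/3}$ applied to the volumes $|\Omega\cap B_j|$ against a cover with bounded overlap, but you should say that the relative isoperimetric inequality is what you use in each chart. For the purposes of this paper the two-line reduction to the smooth case is evidently preferable, but your argument would be the one to use if one needed to track constants or avoid smoothness entirely.
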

\begin{proof}
Let $(M^3,g)$ be $C^0$ asymptotically flat, and extend $g$ to the filled-in manifold $M^{\mathrm{fill}}$. It suffices to show that $(M^{\mathrm{fill}},g)$ has positive isoperimetric constant. Since $g$ is uniformly equivalent to a smooth metric that is Euclidean outside of a compact set, this can be proved using isoperimetric estimates in \cite{Croke:1980}, for example.
\end{proof}

We conclude this section by recalling the definition of $C^0$ pointed Cheeger--Gromov convergence. In the following, a $C^0$ Riemannian manifold refers to a smooth manifold (possibly with boundary) equipped with a continuous Riemannian metric.
\begin{definition}
\label{def:CG}
A sequence of complete pointed $C^0$ Riemannian $n$-manifolds $(M_i,g_i,p_i)$  converges to a complete pointed $C^0$ Riemannian $n$-manifold $(N,h,q)$ \emph{in the $C^0$ pointed Cheeger--Gromov sense} if, given any $R> 0$, there exists an open set $U$ in $N$ containing the ball $B_h(q,R)$ and smooth embeddings $\Phi_i : U \longrightarrow M_i$ (for all $i$ sufficiently large) whose image contains the ball $B_{g_i}(p_i,R)$ in $M_i$, such that $\Phi_i^* g_i$ converges in $C^0$ (\textit{i.e.}, uniformly) to $h$ on $U$, as $i \to \infty$.
\end{definition}

\section{Huisken's isoperimetric mass}
\label{sec:isop}

Most of the content of this section is based on material that Huisken has either presented in public lectures \cites{Huisken:Morse, Huisken:2006} or communicated to us personally \cite{Huisken:private}. We are grateful to him for allowing us to describe some of his unpublished work here. 

Recall that we can choose coordinates $x=(x^1, x^2, x^3)$ for the Schwarzschild metric $g$ of mass $m>0$ so that 
\begin{equation}\label{coordinates}
g_{ij}(x) = \left( 1+ \frac{m}{2r}\right)^4\delta_{ij},
\end{equation}
where $r=|x| \geq \frac{m}{2}$. In this paper we consider the totally geodesic sphere $r=\frac{m}{2}$, often called the horizon, to be the boundary of the Schwarzschild manifold. Note that the horizon has area $16\pi m^2$.

Consider an $r=$ constant sphere of area $A$ in a Schwarzschild space of mass $m>0$ enclosing volume $V$, where enclosed volume means the volume inside the given sphere and outside the horizon. This sets up a relationship between the quantities $A$, $V$, and $m$. Define $\phi_m(A)$ to be the volume $V$ as a function of $m$ and $A$, \textit{i.e.}\ the isoperimetric profile\footnote{H. Bray proved that the isoperimetric surfaces in the Schwarzschild space of mass $m>0$ are in fact the rotationally symmetric spheres \cite{Bray:1997}.} of the Schwarzschild space of mass $m$. It is defined whenever $A \ge 16\pi m^2$ and $m\ge 0$. (Note: the Schwarzschild manifold of mass zero is Euclidean 3-space.)
Obviously, $A\mapsto \phi_m(A)$ grows roughly like the Euclidean isoperimetric profile $\phi_0(A)=\frac{1}{6\sqrt{\pi}}A^{3/2}$ at the top order, but the next highest order behavior in $A$ turns out to be $\tfrac{1}{2}mA$, so that the asymptotics of the isoperimetric profile function ``see'' the mass. More precisely, we have the following.

\begin{lemma} 
\label{lemma_lim_phi_m}
\[  \frac{2}{A}\left(\phi_{m}(A) - \frac{1}{6\sqrt{\pi}}A^{3/2}\right) = m + O(A^{-1/2}),\]
where the error estimate $O(A^{-1/2})$ is uniform for $m \in [0,\mu]$, where $\mu> 0$ is any constant.
\end{lemma}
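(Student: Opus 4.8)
The plan is to parametrize everything by the Euclidean coordinate radius $r$ and to exploit the fact that the relevant difference, written as a function of $r$, has an elementary antiderivative. Set $u = u(r) = 1 + \tfrac{m}{2r}$. From the form of the Schwarzschild metric in \eqref{coordinates}, the coordinate sphere $S_r$ has area $A(r) = 4\pi r^2 u^4$, the volume it encloses (between the horizon $r = \tfrac m2$ and $S_r$) is $V(r) = 4\pi\int_{m/2}^{r} s^2 u(s)^6\,ds$, and $A$ is strictly increasing on $[\tfrac m2,\infty)$ since $A'(r) = 8\pi u^3(r-\tfrac m2)$. Hence $r = r(A)$ is well defined for $A > 16\pi m^2$ and $\phi_m(A) = V(r(A))$ by definition. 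I would first record the algebraic identity $\tfrac{1}{6\sqrt\pi}A^{3/2} = \tfrac{4\pi}{3}r^3 u^6 =: f(r)$, which is immediate from $(4\pi)^{3/2} = 8\pi^{3/2}$.

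The key step is that $V-f$ has an elementary derivative. Using $ru = r + \tfrac m2$,
\[ \frac{d}{dr}\bigl(V(r)-f(r)\bigr) = 4\pi r^2 u^6 - 4\pi r u^5\Bigl(r-\tfrac m2\Bigr) = 4\pi m\,r\,u^5. \]
Integrating from $\tfrac m2$ to $r$ and using $V(\tfrac m2)=0$ together with $f(\tfrac m2) = \tfrac{4\pi}{3}(\tfrac m2)^3\cdot 2^6 = \tfrac{32\pi m^3}{3}$ yields the \emph{exact} identity
\[ \phi_m(A) - \frac{1}{6\sqrt\pi}A^{3/2} = -\frac{32\pi m^3}{3} + 4\pi m\int_{m/2}^{r} s\,u(s)^5\,ds,\qquad r = r(A). \]

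It then remains to estimate the integral and convert from $r$ to $A$. Expanding $u(s)^5$ by the binomial theorem and integrating term by term gives $\int_{m/2}^{r} s\,u(s)^5\,ds = \tfrac{r^2}{2} + O(r)$, with implied constant depending only on $\mu$; the only term needing a second look is the $\tfrac{5m^2}{2}\log\tfrac{2r}{m}$ contribution, which is $O(\log r)=O(r)$ uniformly for $m\in[0,\mu]$, while the boundary terms at $s=\tfrac m2$ are bounded in terms of $\mu$. Substituting back and absorbing the bounded term $-\tfrac{32\pi m^3}{3}$ gives $\phi_m(A) - \tfrac{1}{6\sqrt\pi}A^{3/2} = 2\pi m r^2 + O(r)$. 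Multiplying by $\tfrac 2A = \tfrac{1}{2\pi r^2 u^4}$, the leading term becomes $\tfrac{m}{u^4} = m(1+\tfrac m{2r})^{-4} = m + O(r^{-1})$ (since $0\le 1-(1+t)^{-4}\le 4t$ for $t\ge 0$, applied with $t=\tfrac m{2r}\le\tfrac{\mu}{2r}$), and the error term becomes $O(r^{-1})$. Finally, $r\ge\tfrac m2$ forces $\tfrac m{2r}\le 1$, so $A = 4\pi r^2 u^4 \le 64\pi r^2$ and hence $r^{-1}\le 8\sqrt\pi\,A^{-1/2}$ with a universal constant; combined with the $\mu$-dependent constants above, this gives the claimed estimate with all implied constants depending only on $\mu$.

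The computation is entirely elementary, and the only real obstacle is recognizing that $V-\tfrac{1}{6\sqrt\pi}A^{3/2}$, viewed as a function of $r$, has the explicit antiderivative $4\pi m\,r\,u^5$; after that the lemma reduces to a one-variable binomial expansion. It is also worth noting that the statement is trivial at $m=0$, where $\phi_0(A) = \tfrac{1}{6\sqrt\pi}A^{3/2}$ exactly, so the uniformity claim over $[0,\mu]$ is consistent.
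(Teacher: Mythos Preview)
Your argument is correct and takes a genuinely different route from the paper's. The paper proceeds by direct Taylor expansion: it writes both $\phi_m(A)=\int_{m/2}^r 4\pi\rho^2(1+\tfrac{m}{2\rho})^6\,d\rho$ and $\tfrac{1}{6\sqrt\pi}A^{3/2}=\tfrac{4\pi}{3}r^3(1+\tfrac{m}{2r})^6$, expands $(1+\tfrac{m}{2\rho})^6=1+\tfrac{3m}{\rho}+O(\rho^{-2})$ inside both expressions, and subtracts to find that the $r^3$ terms cancel, leaving $2\pi m r^2+O(r)$ in the numerator. Your approach instead differentiates the difference $V(r)-f(r)$ in $r$ and observes the exact simplification $(V-f)'=4\pi m\,r\,u^5$, which yields the closed-form identity $\phi_m(A)-\tfrac{1}{6\sqrt\pi}A^{3/2}=-\tfrac{32\pi m^3}{3}+4\pi m\int_{m/2}^r s\,u(s)^5\,ds$ before any estimation. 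This makes the factor of $m$ (and hence the $m=0$ case) transparent from the outset and reduces the asymptotic work to a single lower-degree integral; the paper's brute-force expansion is conceptually simpler but requires tracking two sixth-power expansions simultaneously. One small wording slip: in your final paragraph you call $4\pi m\,r\,u^5$ the ``antiderivative'' of $V-f$, when of course it is the derivative.
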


\begin{proof}
With the choice of coordinates (\ref{coordinates}) in the Schwarzschild space, the coordinate sphere of radius $r$ has area
\begin{equation}\label{area}
 A = 4\pi r^2 \left(1 + \frac{m}{2r}\right)^4,
 \end{equation}
 while the enclosed volume between the horizon at radius $m/2$ and the coordinate sphere of radius $r$ is
 \begin{equation}\label{volume}
 \phi_m(A)= \int_{m/2}^r 4\pi \rho^2  \left(1 + \frac{m}{2\rho}\right)^6 d\rho. 
 \end{equation} 
We estimate 
\[\left(1 + \frac{m}{2r}\right)^6 = 1 + \frac{3m}{r} + O(r^{-2}),\]
which is uniform in $m$ because $0 \leq m \leq \mu$. Now,
\begin{align*}
\frac{2}{A}\left(\phi_{m}(A) - \frac{1}{6\sqrt{\pi}}A^{3/2}\right) &= \frac{2}{A}\left[\int_{m/2}^r 4\pi \rho^2  \left(1 + \frac{m}{2\rho}\right)^6 d\rho -\frac{(4\pi)^{3/2}r^3}{6\sqrt{\pi}}\left(1 + \frac{m}{2r}\right)^6\right]\\
&= \frac{2}{A}\left[\int_{m/2}^r 4\pi \rho^2  \left(1 + \frac{3m}{\rho} + O(\rho^{-2})\right) d\rho\right.\\
&\qquad\qquad\qquad\qquad\qquad \left. -\frac{4\pi r^3}{3}\left(1 + \frac{3m}{r} + O(r^{-2})\right)\right]\\
&= \frac{2}{A}\left[\left( \frac{4\pi r^3}{3} + 6\pi mr^2 +O(r)\right) - \left(  \frac{4\pi r^3}{3} + 4\pi mr^2 +O(r)\right) \right]\\
&= \frac{2}{4\pi r^2 \left(1 + \frac{m}{2r}\right)^4}\left[2\pi m r^2 + O(r)\right]\\
&= m + O(r^{-1})\\
&= m+O(A^{-1/2})
\end{align*}
where we use the bound $m\le \mu$ to see that $O(A^{-1/2})$ is uniform among $m \in [0,\mu]$.
\end{proof}
This calculation motivates the following.
\begin{definition}[Huisken]
\label{def:isop_mass}
Let $(M,g)$ be a $C^0$ asymptotically flat 3-manifold. The \emph{quasilocal isoperimetric mass} of an allowable region
$\Omega$ is
\[ \miso(\Omega)=\miso(\Omega,g):=   \frac{2}{|\partial^* \Omega|}\left(|\Omega| - \frac{1}{6\sqrt{\pi}}|\partial^* \Omega|^{3/2} \right).\] 
The \emph{isoperimetric mass} of $(M,g)$ is defined by
\[\miso(M,g) = \sup_{\{\Omega_i\}_{i=1}^\infty} \left(\limsup_{i \to \infty} \miso(\Omega_i,g)\right),\]
where the supremum is taken over all exhaustions $\{\Omega_i\}_{i=1}^\infty$ of $M$ by allowable regions. Note that $\miso(M,g) \in [-\infty,\infty]$ is manifestly a geometric invariant.
\end{definition}
\begin{remark}
Other definitions of the isoperimetric mass could be used. For instance, instead of requiring exhaustions of $M$, one might instead consider using sequences of bounded open sets whose perimeters approach infinity. In the Appendix, we prove that these approaches are equivalent under the assumption of positive mass.
\end{remark}
One key feature of the quasilocal isoperimetric mass and the isoperimetric mass is that they only require notions of volume and area in order to make sense, and therefore they have great potential for applications to low regularity situations or, as is the case in this paper, low regularity convergence. On the other hand, the isoperimetric mass agrees with the usual definition of mass as described in Theorem \ref{thm:iso-adm}. The proof uses Hawking mass in an essential way.

\begin{definition}
Given a compact $C^2$ surface $\Sigma$ in a smooth Riemannian 3-manifold $(M, g)$, recall that the \emph{Hawking mass} of $\Sigma$ is
\begin{equation}\label{hawking}
 \mh(\Sigma) := \sqrt{\frac{|\Sigma|}{16\pi}} \left(1 - \frac{1}{16\pi} \int_\Sigma H^2\right). 
 \end{equation}
\end{definition}
The following important theorem (for which the Riemannian Penrose inequality is a corollary) is not explicitly stated in \cite{Huisken-Ilmanen:2001}, but it follows directly from the results there. 

\begin{thm}[Huisken--Ilmanen \cite{Huisken-Ilmanen:2001}]\label{thm:haw-adm}
Let $(M,g)$ be a smooth asymptotically flat 3-manifold whose boundary is empty or minimal, with nonnegative scalar curvature and no compact minimal surfaces in its interior. Then for any outward-minimizing allowable region with connected, $C^2$ boundary $\Sigma$, we have
\[ \mh(\Sigma) \leq \madm(M,g).\]
\end{thm}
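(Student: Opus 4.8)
The plan is to deduce this from Huisken and Ilmanen's proof of the Riemannian Penrose inequality \cite{Huisken-Ilmanen:2001}, the only new feature being that the weak inverse mean curvature flow (IMCF) is started at a general outward-minimizing surface rather than at a horizon; the outermost-minimal-surface hypotheses on $(M,g)$ are precisely what makes this cause no trouble.

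First I would set up the flow. Extend $g$ continuously (indeed smoothly) over the filled-in manifold $M^{\mathrm{fill}}$. Since $\Omega$ is outward-minimizing in $M$ with $C^2$ boundary, it coincides with its own minimizing hull in $M$; replacing it by its minimizing hull in $M^{\mathrm{fill}}$ alters it only inside the fill-in region, and because $\partial M$ is outermost the modification consists of minimal pieces which change neither the perimeter nor the Hawking mass. Thus we may apply the weak IMCF of \cite{Huisken-Ilmanen:2001} to obtain a nested increasing family $\{E_t\}_{t\ge 0}$ of precompact sets of finite perimeter with $\partial^* E_0 = \Sigma$ and $|\partial^* E_t| = |\Sigma|\, e^{t}$, each $\partial^* E_t$ outward-minimizing, solving IMCF weakly and smoothly away from a closed set of jump times. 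Because the flow only expands it never reaches $\partial M$; and whenever it is about to run into a fill-in component not already contained in $E_0$ it instead jumps over that component all at once, since the enclosed minimal surface is outward-minimizing. Hence $\partial^* E_t \subset M$ for every $t$ --- so the evolving surfaces see only the region where $R_g \ge 0$ --- and $\{E_t \smallsetminus W\}$ exhausts the asymptotically flat end of $M$.

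Next comes the monotonicity $\mh(\partial^* E_{t_1}) \le \mh(\partial^* E_{t_2})$ for $t_1 \le t_2$, which is Geroch's computation in the weak form of \cite{Huisken-Ilmanen:2001}. On the intervals of smooth flow one differentiates $\mh(\partial E_t)$, using $|\partial E_t| = |\Sigma|e^{t}$, the traced Gauss equation, $R_g \ge 0$, and the Gauss--Bonnet bound $\int_{\partial E_t} K \le 4\pi$; at a jump time the area stays continuous while $\int H^2$ cannot increase, because the surface jumps to a minimizing hull whose new portions are minimal. The Gauss--Bonnet bound is the delicate point: it requires $\partial^* E_t$ to be connected. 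Connectedness propagates from $\Sigma$ along the weak flow because a minimizing hull has no ``inner'' boundary component (filling such a component would strictly decrease perimeter) and because the jump mechanism fattens incipient necks rather than letting the flow pinch off --- this is the ``connectedness of the surfaces'' argument of \cite{Huisken-Ilmanen:2001}. Finally, letting $t\to\infty$, monotonicity gives $\mh(\Sigma) = \mh(\partial^* E_0) \le \lim_{t\to\infty}\mh(\partial^* E_t)$, while the asymptotic analysis of the weak flow in an asymptotically flat end in \cite{Huisken-Ilmanen:2001} --- the surfaces become round at the rate permitted by the decay of $g$, and are compared with large coordinate spheres --- shows this limit is at most $\madm(M,g)$. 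Combining gives $\mh(\Sigma)\le \madm(M,g)$.

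I expect the main obstacle to be not any single new estimate but verifying that the whole Huisken--Ilmanen apparatus --- existence of the weak flow, the jump formalism, connectedness of the evolving surfaces, monotonicity of the Hawking mass across jumps, and the asymptotic comparison with coordinate spheres --- carries over when the flow is initiated at an arbitrary outward-minimizing $C^2$ surface inside $M$ and in the presence of the fill-in region. Each of these points is either already established in \cite{Huisken-Ilmanen:2001} or follows from the outermost-minimal-boundary and no-interior-minimal-surface hypotheses in the statement, which together guarantee that the fill-in is innocuous and that the flow behaves exactly as in the Penrose inequality proof.
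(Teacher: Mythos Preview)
The paper does not actually give a proof of this theorem; it simply remarks that the statement, while not explicitly formulated in \cite{Huisken-Ilmanen:2001}, follows directly from the results there. Your sketch --- run weak IMCF from the outward-minimizing $\Sigma$ in the filled-in manifold, invoke Geroch monotonicity using connectedness of the evolving surfaces and nonnegative scalar curvature, and compare with $\madm$ as $t\to\infty$ --- is precisely the argument one extracts from that reference, so your approach and the paper's are the same.
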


We will also make use of the Riemannian Penrose inequality itself, stated below. In the case that $\partial M$ is connected, it follows immediately from Theorem \ref{thm:haw-adm}.
\begin{thm}[Bray \cite{Bray:2001}, cf. Huisken--Ilmanen \cite{Huisken-Ilmanen:2001}]\label{thm:RPI}
Let $(M,g)$ be a smooth asymptotically flat 3-manifold whose boundary is nonempty and minimal, with nonnegative scalar curvature and no compact minimal surfaces in its interior. Then
\[ \madm(M,g) \geq \sqrt{\frac{|\partial M|_g}{16\pi}}.\]
\end{thm}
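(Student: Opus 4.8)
The plan is to split into the connected and disconnected boundary cases. When $\partial M$ is connected, the statement is immediate from Theorem~\ref{thm:haw-adm}: since $(M,g)$ has no compact minimal surfaces in its interior, $\partial M$ is the outermost minimal surface and is therefore outward-minimizing in $(M,g)$ (equivalently, in the filled-in manifold the fill-in region $W$ is outward-minimizing with $\partial W=\partial M$), so $W$ is an outward-minimizing allowable region whose boundary $\Sigma=\partial M$ is connected and $C^2$. As $\partial M$ is minimal, $H\equiv 0$ there, hence $\mh(\partial M)=\sqrt{|\partial M|_g/16\pi}$, and Theorem~\ref{thm:haw-adm} gives $\sqrt{|\partial M|_g/16\pi}\le\madm(M,g)$. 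I would record this as a one-line deduction.

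For the general case $\partial M$ may be disconnected, and then Theorem~\ref{thm:haw-adm} (or, equivalently, the inverse mean curvature flow) only yields $\madm(M,g)\ge\sqrt{|\Sigma_0|_g/16\pi}$ for the single largest component $\Sigma_0$ of $\partial M$; capturing the \emph{full} area requires Bray's conformal flow of metrics \cite{Bray:2001}, which I would sketch as follows. Starting from $g_0:=g$, one deforms through conformally related metrics $g_t=u_t^4 g_0$, where $u_t>0$ is $g_0$-harmonic in the region exterior to an evolving minimal-area enclosure $\Sigma_t$ of $\partial M$, vanishes on $\Sigma_t$, has a prescribed (exponentially decaying in $t$) constant limit at spatial infinity, and whose $t$-derivative is a companion $g_0$-harmonic function determined by the pair $(u_t,\Sigma_t)$. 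Because $u_t$ is harmonic, one checks that the scalar curvature of $g_t$ stays nonnegative; that $\Sigma_t$ remains minimal and outward-minimizing in $g_t$ with $|\Sigma_t|_{g_t}$ \emph{constant} in $t$ and equal to $|\partial M|_{g_0}$; and that, as $t\to\infty$, the portion of $(M,g_t)$ outside $\Sigma_t$ converges, after rescaling, to an exterior Schwarzschild region, whose mass $m_\infty$ must therefore equal $\sqrt{|\partial M|_g/16\pi}$ since the horizon area is preserved.

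The crux, and the step I expect to be the main obstacle, is the monotonicity $\frac{d}{dt}\madm(M,g_t)\le 0$. One computes this derivative as a flux integral over the coordinate sphere at infinity and shows it is nonpositive by invoking the positive mass theorem on (a conformally modified copy of) the manifold obtained by doubling $(M\smallsetminus\Sigma_t,g_t)$ across the minimal surface $\Sigma_t$: this double has two asymptotically flat ends, has nonnegative scalar curvature away from $\Sigma_t$, and, being minimal from both sides, is only Lipschitz across $\Sigma_t$ in a way that produces no negative distributional scalar curvature there, so the version of the positive mass theorem admitting such corners applies. Granting the monotonicity, $\madm(M,g)=\madm(M,g_0)\ge\lim_{t\to\infty}\madm(M,g_t)=m_\infty=\sqrt{|\partial M|_g/16\pi}$, as desired. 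The genuinely technical parts --- existence and regularity of the flow across times at which the topology of $\Sigma_t$ jumps or its components merge, and the correct measure-theoretic definition of $\Sigma_t$ --- are exactly what make Bray's proof long; rather than reproduce them I would simply cite \cite{Bray:2001}.
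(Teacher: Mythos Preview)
Your proposal is correct and matches the paper's treatment: the paper does not prove Theorem~\ref{thm:RPI} but merely cites Bray~\cite{Bray:2001} for the general case and remarks that the connected-boundary case follows immediately from Theorem~\ref{thm:haw-adm}, exactly as you do. Your added explanation of why the fill-in region is an outward-minimizing allowable region with $\mh(\partial M)=\sqrt{|\partial M|_g/16\pi}$, together with the sketch of Bray's conformal flow, simply fleshes out what the paper leaves implicit in its citation.
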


We consider the proof of the inequality $\miso(M,g)\le\madm(M,g)$, which is the part of Theorem \ref{thm:iso-adm} that is most relevant to our work. The key ingredient is a monotonicity formula for mean curvature flow (MCF). A similar monotonocity had been used by F. Schulze for application to the isoperimetric inequality \cite{Schulze:2008}.

\begin{prop}[Huisken's relative volume monotonicity for MCF \cite{Huisken:Morse}] 
Let $(M,g)$ be a smooth asymptotically flat 3-manifold, and let $\{\Sigma_t\}_{t \in [0,T)}$ be a smooth mean curvature flow of compact surfaces, where $\Sigma_t = \partial \Omega_t$ for bounded open sets $\Omega_t \subset M$.  Let $m\ge0$ be a constant such that $|\Sigma_t|> 16\pi m^2$ and  $\mh(\Sigma_t)\le m$ for all $t\in[0,T)$.  Then
\[ \frac{d}{dt} \Big(\phi_m(|\Sigma_t|) - |\Omega_t|\Big) \leq 0.\]
\label{thm:huisken-monotonicity}
\end{prop}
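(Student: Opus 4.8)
The plan is to differentiate $\phi_m(|\Sigma_t|)-|\Omega_t|$ directly and reduce everything to two short facts about the Schwarzschild profile $\phi_m$. Write $A_t=|\Sigma_t|$ and $V_t=|\Omega_t|$, and let $H$ be the scalar mean curvature of $\Sigma_t$ with respect to the outward unit normal, with the same sign convention as in \eqref{hawking} (so $H>0$ on a round sphere). Under mean curvature flow the standard first-variation formulas give $\tfrac{d}{dt}A_t=-\int_{\Sigma_t}H^2$ and $\tfrac{d}{dt}V_t=-\int_{\Sigma_t}H$, so
\[
\frac{d}{dt}\Bigl(\phi_m(A_t)-V_t\Bigr)\;=\;-\,\phi_m'(A_t)\!\int_{\Sigma_t}\!H^2\;+\;\int_{\Sigma_t}\!H .
\]
Hence it suffices to prove, for each fixed $t$, the inequality $\phi_m'(A_t)\int_{\Sigma_t}H^2\ge\int_{\Sigma_t}H$.

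The first fact I would establish is the identity $\phi_m'(A)=1/\bar H(A)$, where $\bar H(A)$ denotes the (constant) mean curvature of the coordinate sphere of area $A$ in the Schwarzschild space of mass $m$; here the hypothesis $A>16\pi m^2$ enters, guaranteeing that this sphere lies strictly outside the horizon so that $\bar H(A)>0$. This is immediate from the chain rule $\phi_m'(A)\,A'(r)=V'(r)$ applied to \eqref{area} and \eqref{volume}: one computes $V'(r)=4\pi r^2(1+\tfrac{m}{2r})^6$ and $A'(r)=8\pi r(1+\tfrac{m}{2r})^3(1-\tfrac{m}{2r})$, and a direct computation of the mean curvature of the coordinate sphere of radius $r$ gives $\bar H(A)=\tfrac{2}{r}(1+\tfrac{m}{2r})^{-3}(1-\tfrac{m}{2r})$; comparing proves the claim. (Equivalently, this is the general first-variation identity $\phi'=1/H$ for an isoperimetric profile, valid here since Schwarzschild coordinate spheres are isoperimetric.) The case $m=0$ is included, with $\phi_0'(A)=A^{1/2}/(4\sqrt\pi)$ the reciprocal of the mean curvature of a Euclidean sphere of area $A$.

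The second fact is that the hypothesis $\mh(\Sigma_t)\le m$ converts into the lower bound $\int_{\Sigma_t}H^2\ge\bar H(A_t)^2A_t$. Indeed, at fixed area the quantity $\mh$ is, by \eqref{hawking}, a strictly decreasing function of $\int H^2$; the coordinate sphere $S$ of area $A_t$ in Schwarzschild of mass $m$ has Hawking mass exactly $m$ and, being umbilic with constant mean curvature, satisfies $\int_S H^2=\bar H(A_t)^2A_t$; since $\mh(\Sigma_t)\le m=\mh(S)$, it follows that $\int_{\Sigma_t}H^2\ge\int_S H^2=\bar H(A_t)^2A_t$. Combined with $\phi_m'(A_t)=1/\bar H(A_t)$ this reads $\phi_m'(A_t)^2\int_{\Sigma_t}H^2\ge A_t$. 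Feeding this into Cauchy--Schwarz on $\Sigma_t$, namely $\int_{\Sigma_t}H\le\bigl(A_t\int_{\Sigma_t}H^2\bigr)^{1/2}$ (trivially true when $\int_{\Sigma_t}H\le 0$), yields
\[
\phi_m'(A_t)\int_{\Sigma_t}H^2\;=\;\Bigl(\phi_m'(A_t)^2\!\int_{\Sigma_t}\!H^2\Bigr)^{1/2}\Bigl(\int_{\Sigma_t}\!H^2\Bigr)^{1/2}\;\ge\;\Bigl(A_t\!\int_{\Sigma_t}\!H^2\Bigr)^{1/2}\;\ge\;\int_{\Sigma_t}H ,
\]
with the degenerate case $\int_{\Sigma_t}H^2=0$ (so $H\equiv 0$) trivial. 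This is exactly the inequality needed above.

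I expect essentially no conceptual obstacle: the work is entirely in the sign bookkeeping for the first-variation formulas and in the two short Schwarzschild computations. The real point --- and the reason the estimate is so rigid --- is that Huisken's profile $\phi_m$ is engineered so that the Schwarzschild coordinate spheres simultaneously realize equality in $\phi_m'=1/\bar H$, saturate the Hawking-mass hypothesis, and saturate Cauchy--Schwarz; any deviation of $\Sigma_t$ from such a sphere only helps. If one wanted a fully invariant statement one could also phrase the second fact as ``$\mh(\Sigma_t)\le m$ forces $\int_{\Sigma_t}H^2$ to be at least its value on the Schwarzschild comparison sphere of the same area,'' avoiding any explicit formula.
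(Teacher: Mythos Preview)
Your proof is correct and follows essentially the same route as the paper: both compute $\tfrac{d}{dt}(\phi_m(A_t)-V_t)=-\phi_m'(A_t)\int H^2+\int H$, evaluate $\phi_m'(A)$ via the chain rule from \eqref{area}--\eqref{volume}, invoke Cauchy--Schwarz on $\int H$, and use the Hawking-mass hypothesis to bound $\int H^2$ from below. The only difference is cosmetic: the paper carries the explicit Schwarzschild expression $\phi_m'(A)=\sqrt{A/16\pi}\,\tfrac{1+m/2r}{1-m/2r}$ through the computation and checks at the end that the resulting bound vanishes, whereas you interpret this quantity as $1/\bar H(A)$ and phrase the Hawking-mass bound as $\int_{\Sigma_t}H^2\ge\bar H(A_t)^2A_t$, which makes the cancellation transparent from the outset.
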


\begin{remark}
Note that this result requires neither nonnegative scalar curvature nor connectedness of $\Sigma_t$. We will be interested in the case in which $m$ is the ADM mass of $(M,g)$.
\end{remark}

\begin{proof}
Let $A = A(t) = |\Sigma_t|> 16\pi m^2$.  Using the coordinates \eqref{coordinates},
 define $r=r(t)>\frac{m}{2}$ to be the radius of the unique coordinate sphere in the Schwarzschild manifold of mass $m$ whose area equals $A(t)$. 
Then
\begin{align*}
\frac{d}{dt} \Big(\phi_m(|\Sigma_t|) - |\Omega_t|\Big) &= \frac{d\phi_m}{dr} \frac{dr}{dA}\frac{dA}{dt} + \int_{\Sigma_t} H_t
\end{align*}
where $H_t$ is the mean curvature of $\Sigma_t$ (and equals the inward flow speed). From equation~\eqref{volume}, we have
\[\frac{d\phi_m}{dr} = 4\pi r^2\left(1+\frac{m}{2r}\right)^6,\]
and from equation \eqref{area}, we can derive
\[\frac{dA}{dr} = 8\pi r \left(1 + \frac{m}{2r}\right)^3\left(1 - \frac{m}{2r}\right).\]
Next, under mean curvature flow,
\[\frac{dA}{dt} = -\int_{\Sigma_t} H_t^2.\]
Thus,
\begin{align}
\frac{d}{dt} \Big(\phi_m(|\Sigma_t|) - |\Omega_t|\Big) &= 4\pi r^2\left(1+\frac{m}{2r}\right)^6 \frac{1}{8\pi r \left(1 + \frac{m}{2r}\right)^3\left(1 - \frac{m}{2r}\right)} \left(-\int_{\Sigma_t} H_t^2 \right) + \int_{\Sigma_t} H_t \nonumber\\
&=\sqrt{\frac{A}{16\pi}} \left(\frac{1+ \frac{m}{2r}}{1-\frac{m}{2r}}\right)\left(-\int_{\Sigma_t} H_t^2 \right) + \int_{\Sigma_t} H_t \nonumber\\
& \leq \left(A \int_{\Sigma_t} H_t^2 \right)^{1/2} \left(1- \left(\frac{1+ \frac{m}{2r}}{1-\frac{m}{2r}}\right)\left(\frac{1}{16\pi}\int_{\Sigma_t} H_t^2 \right)^{1/2}\right), \label{plugin}
\end{align}
where we used the Cauchy--Schwarz inequality for the last step. From the definition of Hawking mass \eqref{hawking} and the  assumption $m_H(\Sigma_t) \leq m$, we have
\begin{equation}\label{hawking-bound}
 \frac{1}{16\pi} \int_\Sigma H^2 \ge 1-\sqrt{\frac{16\pi m^2}{A}}. 
 \end{equation}
Since $A > 16\pi m^2$, we can take the square root of both sides and substitute this into \eqref{plugin} to obtain
\begin{align*}
\frac{d}{dt} \Big(\phi_m(|\Sigma_t| - |\Omega_t|\Big) &\leq \left(A \int_{\Sigma_t} H_t^2 dA_t\right)^{1/2} \left(1- \left(\frac{1+ \frac{m}{2r}}{1-\frac{m}{2r}}\right)\sqrt{1-\sqrt{\frac{16\pi m^2}{A}}}\right).
\end{align*}
This last expression vanishes identically. This can be checked explicitly or by observing that equality holds in \eqref{plugin} and \eqref{hawking-bound} for the coordinate spheres evolving by MCF in the Schwarzschild manifold of mass $m$, in which case $\phi_m(|\Sigma_t|) - |\Omega_t|$ is constant.
\end{proof}

We now sketch Huisken's approach to proving $\miso(M,g)\le \madm(M,g)$ (assuming nonnegative scalar curvature) using Proposition \ref{thm:huisken-monotonicity} (with some parts explained in greater detail later in the paper). This type of argument will be used in the proof of our main theorem.

Given $\epsilon>0$, we want to show that for any sufficiently large region $\Omega$, we have $\miso(\Omega) < \madm(M,g) +\epsilon$. Without loss of generality, we can assume that $\Omega$ is outward-minimizing (since replacing a set with its minimizing hull only increases the quasilocal isoperimetric mass). Moreover, we may assume that $\Sigma:=\partial \Omega$ is connected (see Lemma \ref{lemma:isop_mass} below) and lies in the asymptotically flat region of $M$. We consider a MCF $\Sigma_t=\partial\Omega_t$ with initial condition $\Sigma$. In particular, $\Sigma_t$ remains outward-minimizing. (This is explained in Section \ref{sec:weak_flow}.) 
Notice that if we set $m:=\madm(M,g)$, then by Theorem  \ref{thm:haw-adm}, we have $\mh(\Sigma_t)\le m$ for all $t$. 
\emph{Suppose} that we can flow $\Sigma_t$ until some time $t^*$ when it encloses a volume smaller than some constant $\nu>0$ independent of the initial region $\Omega$. If that were the case, then Huisken's relative volume monotonicity (Proposition \ref{thm:huisken-monotonicity}) tells us that
\[ \phi_m(|\Sigma|) - |\Omega| \ge \phi_m(|\Sigma_{t^*}|) - |\Omega_{t^*}| \ge -\nu.\] 
Then 
\begin{align*}
\miso(\Omega) &= \frac{2}{|\Sigma|} \left( |\Omega| - \frac{1}{6\sqrt{\pi}}|\Sigma|^{3/2}\right)\\
&\le \frac{2}{|\Sigma|} \left( \phi_m(|\Sigma|) +\nu - \frac{1}{6\sqrt{\pi}}|\Sigma|^{3/2}\right),
\end{align*}
which we know converges to $m$ as $|\Sigma|\to\infty$, by Lemma \ref{lemma_lim_phi_m}. Huisken suggests that this argument can be made rigorous by working solely in the asymptotically flat region where the metric is close to Euclidean, using curvature estimates for MCF \cite{Huisken:private}. However, in this paper we will take a different approach that uses weak MCF. One reason why we choose a different approach is that the $C^0$ convergence in Theorem \ref{thm:main} means that we cannot rely on curvature estimates. See Section \ref{outline} for details.

We close this section with a useful restatement of the definition of $\miso(M,g)$.

\begin{lemma}
\label{lemma:isop_mass}
Let $(M,g)$ be a $C^0$ asymptotically flat 3-manifold. The isoperimetric mass 
\[\miso(M,g) = \sup_{\{\Omega_i\}_{i=1}^\infty} \left(\limsup_{i \to \infty} \miso(\Omega_i,g)\right),\]
can be computed by taking the supremum over exhaustions by allowable regions $\Omega_i$ as in Definition \ref{def:isop_mass} with the additional property that each $\partial\Omega_i$ is smooth and connected.

Moreover, given any $\delta>0$, we can further restrict to exhaustions such that $\Omega_i$ has isoperimetric ratio at most $6\sqrt{\pi}+\delta$.
\end{lemma}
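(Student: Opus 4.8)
The plan is to start from an arbitrary exhaustion $\{\Omega_i\}$ realizing a value close to $\miso(M,g)$ (or witnessing that it is $+\infty$) and to successively modify each $\Omega_i$ so as to achieve the three extra properties — smoothness, connectedness, and controlled isoperimetric ratio — while never decreasing $\limsup_i \miso(\Omega_i)$ and while keeping the sets an exhaustion. The guiding principle throughout is that $\miso(\Omega)$ can only increase when we replace $\Omega$ by a larger set with no more perimeter: indeed $\miso(\Omega) = \tfrac{2}{|\partial^*\Omega|}(|\Omega| - \tfrac{1}{6\sqrt\pi}|\partial^*\Omega|^{3/2})$, and the map $P \mapsto \tfrac{2}{P}(V - \tfrac{1}{6\sqrt\pi}P^{3/2})$ is decreasing in $P$ for fixed $V$ (its $P$-derivative is $-\tfrac{2V}{P^2} - \tfrac{1}{3\sqrt\pi}\,\tfrac{1}{\sqrt P}<0$ once $V>0$, which holds for large $\Omega$) and increasing in $V$ for fixed $P$; so enlarging $\Omega$ to a superset of no greater perimeter, or to a superset of the same perimeter and larger volume, can only help.

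First I would address smoothness and the perimeter bound simultaneously by passing to minimizing hulls. Fix a large $i$, and work in the filled-in manifold $M^{\mathrm{fill}}$ with a fixed smooth background metric $\bar g$ uniformly equivalent to $g$; approximate $\Omega_i$ from outside by a slightly larger open set with smooth boundary (a sublevel set of a mollified distance/indicator function), losing only an arbitrarily small amount in perimeter and volume, hence changing $\miso$ arbitrarily little. Now take the minimizing hull $\tilde\Omega$ of this smooth set with respect to $g$: by Theorem~\ref{regularity-thm} its boundary is $C^{1,1}$ (and can be further smoothed slightly from outside as above), it has perimeter no larger than that of $\Omega_i$, and it contains $\Omega_i$, so $\miso$ does not decrease. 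The minimizing hull may fail to be allowable — it can slice through a fill-in region $W_j$ — but each such $W_j$ is relatively compact, so enlarging $\tilde\Omega$ to include all the $W_j$ it meets costs bounded perimeter and volume, negligible as $i\to\infty$; after this correction we have an allowable, outward-minimizing region with smooth boundary. Outward-minimizing implies the isoperimetric ratio bound automatically for large $i$: comparing with a large coordinate ball $B_r \supset \tilde\Omega$ gives $|\partial^*\tilde\Omega| \le |S_r|$, while $|\tilde\Omega| \le |B_r|$, and Lemma~\ref{lemma_AF_asymptotics} forces the ratio $|\partial^*\tilde\Omega|^{3/2}/|\tilde\Omega|$ to be at most $6\sqrt\pi + \delta$ once $i$ is large; this handles the "moreover" clause with the same construction.

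It remains to make $\partial\Omega_i$ connected, which I expect to be the main obstacle, since throwing away components of $\Omega_i$ or filling in "holes" must be done without losing volume or gaining perimeter. Here is the approach: write $\partial\tilde\Omega = \Sigma^{(0)} \sqcup \Sigma^{(1)} \sqcup \cdots$ where $\Sigma^{(0)}$ is the unique noncompact-side boundary component "facing infinity" (the outermost one, enclosing all of $\tilde\Omega$), and the other $\Sigma^{(k)}$ bound the bounded complementary components — the "holes" — of $\tilde\Omega$. Fill in all of the holes: let $\Omega^\sharp$ be the union of $\tilde\Omega$ with all bounded components of $M^{\mathrm{fill}}\setminus\tilde\Omega$. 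Then $\partial\Omega^\sharp = \Sigma^{(0)}$ is connected, $\Omega^\sharp \supset \tilde\Omega$, and crucially $|\partial^*\Omega^\sharp| = |\Sigma^{(0)}| \le |\partial^*\tilde\Omega|$ while $|\Omega^\sharp| \ge |\tilde\Omega|$, so $\miso(\Omega^\sharp) \ge \miso(\tilde\Omega)$; allowability is preserved since filling in a hole either absorbs a whole $W_j$ or none of it. (If $\Sigma^{(0)}$ were disconnected one would first group components into "nested families" and fill accordingly, but in a one-ended manifold the outer boundary of the filled region facing the single end is a single component; a short topological argument using that $M$ has one end makes this precise — this is the delicate point to get right.) Finally, $\{\Omega^\sharp_i\}$ is still an exhaustion because each contains $\Omega_i$, and the resulting regions have smooth connected boundary, are allowable, satisfy the isoperimetric ratio bound, and have $\limsup_i \miso(\Omega^\sharp_i) \ge \limsup_i \miso(\Omega_i)$, which can be taken arbitrarily close to $\miso(M,g)$. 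Running the reverse trivial inequality (the restricted supremum is over a subclass, hence $\le \miso(M,g)$) completes the proof.
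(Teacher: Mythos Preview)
Your proposal has two genuine gaps, and in both places the paper takes a simpler route.

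\textbf{Isoperimetric ratio.} From $|\partial^*\tilde\Omega| \le |S_r|$ and $|\tilde\Omega| \le |B_r|$ you cannot bound $|\partial^*\tilde\Omega|^{3/2}/|\tilde\Omega|$ from above: the second inequality points the wrong way. In fact outward-minimizing exhaustions need not have bounded isoperimetric ratio. In $\R^3$, take $\Omega_i = B_i \cup C_i$ where $C_i$ is a long thin convex body placed far from $B_i$ with $|\partial C_i| \gg i^2$; the union is its own minimizing hull when the pieces are well separated, it exhausts $\R^3$, yet its ratio blows up. The paper's fix is elementary and quite different: if $\Omega_i$ has ratio exceeding $6\sqrt\pi + \delta$, \emph{replace it outright} by the coordinate ball $B_{r_i}$ with $|\partial B_{r_i}| = |\partial\Omega_i|$. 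By Lemma~\ref{lemma_AF_asymptotics} this ball has ratio below $6\sqrt\pi+\delta$ for large $i$, and since it has the same perimeter but at least as much volume, $\miso(B_{r_i}) \ge \miso(\Omega_i)$.

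\textbf{Connectedness.} Filling in the bounded complementary components fails whenever $\tilde\Omega$ is itself disconnected: for two disjoint balls in $\R^3$ the complement has no bounded component, so $\Omega^\sharp = \tilde\Omega$ and $\partial\Omega^\sharp$ is still two spheres. Your one-endedness remark does not rescue this, and nothing in the construction forces $\tilde\Omega$ to be connected (the minimizing hull of a disconnected set can stay disconnected). The paper instead connects the smooth components of $\partial\Omega_i$ by thin tubes avoiding the fill-in region, which perturbs area and volume---hence $\miso$---by an arbitrarily small amount while producing a smooth connected boundary.

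Finally, the minimizing-hull detour is unnecessary for this lemma and forces you to repair allowability afterwards; the paper never takes hulls here. For smoothness it simply invokes the standard approximation of a finite-perimeter set by one with smooth boundary and nearly the same volume and perimeter.
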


\begin{proof}
First, note that any exhausting sequence $\Omega_i$ always becomes allowable for large enough $i$.

To see that we can assume smooth boundaries, we just need to use the well-known fact that any bounded open set $\Omega$ of finite perimeter can be approximated by a set with smooth boundary whose volume and perimeter are arbitrarily close to that of $\Omega$.\footnote{This is proved in \cite[Theorem 3.42]{Ambrosio-et-al:2000} for the case of $\R^n$ by approximating the characteristic function of $\Omega$ in $L^1$ by smooth functions $u_i$ and considering super-level sets of $u_i$. A similar argument works in a smooth Riemannian manifold.  Finally, it must also hold for continuous metrics since they can be uniformly approximated by smooth metrics.} 
To see that we can assume connected boundaries, we can just connect the boundaries by thin tubes that avoid the fill-in region---a procedure that changes volume and perimeter by arbitrarily small amounts and can maintain the smoothness of the boundary.

For the second claim, let $\delta>0$. Consider an exhaustion $\{\Omega_i\}_{i=1}^\infty$, and note that $|\partial\Omega_i|\to\infty$ as $i\to\infty$ by the isoperimetric inequality. We will construct a competing sequence $\Omega_i'$ whose isoperimetric ratios are less than $6\sqrt{\pi}+\delta$, meanwhile 
\[ \limsup_{i\to\infty} \miso(\Omega_i') \ge  \limsup_{i\to\infty} \miso({\Omega}_i).\]

For large $i$, define $B_{r_i}$ to be the region enclosed by a coordinate sphere $\partial B_{r_i}$ whose area is exactly equal to $|\partial\Omega_i|$. By Lemma \ref{lemma_AF_asymptotics}, we know that $B_{r_i}$ has isoperimetric ratio less than $6\sqrt{\pi}+\delta$ for sufficiently large $i$. If $\Omega_i$ has smaller isoperimetric ratio than $B_{r_i}$, then there is nothing to do, and we can define $\Omega_i'=\Omega_i$. Otherwise, since $|\partial\Omega_i|= |\partial B_{r_i}|$, we see that $|\Omega_i|\le |B_{r_i}|$, and we can define
 $\Omega_i' = B_{r_i}$.  It then follows that $\miso(\Omega_i') = \miso(B_{r_i}) \ge \miso(\Omega_i)$, and the sequence $\Omega_i'$ has the desired property (after truncating a finite number of terms). 
\end{proof}

\section{Outline of proof and discussion of key difficulties}\label{outline}
\label{sec:outline}

In this section we outline our approach to the proof of Theorem \ref{thm:main} and describe the difficulties that must be overcome. Assume we have a sequence  $(M_i, g_i, p_i)$ converging to $(N, h, q)$ in the $C^0$ Cheeger--Gromov sense, satisfying all of the hypotheses of Theorem \ref{thm:main}. Let $\epsilon>0$, and choose a large bounded region $U\subset N$  such that $\miso(N,h) < \miso(U) + \epsilon$. Using the $C^0$ convergence of the metrics, for large enough $i$, we can find a bounded region $U_i\subset M_i$  such that $\miso(U)< \miso(U_i) +\epsilon$. The final step is to show that as long as $|\partial U|$ was chosen to be large enough, $\miso(U_i) < \madm(M_i, g_i)+\epsilon$. The main difficulty is that we need this to hold for $\epsilon$ independent of $i$.

Our approach is as follows: Choose $m= \displaystyle\liminf_{i\to\infty} \madm(M_i,g_i)+\epsilon$ and then pass to a subsequence so that $m\ge \madm(M_i, g_i)$ for large enough $i$. (Note that if $m=+\infty$, the claim is trivial.) We replace $U_i$ by its outward-minimizing hull $\tilde{U}_i$ so that $\miso(U_i)\le \miso(\tilde{U}_i)$. We then attempt to flow its boundary $\Sigma_0:= \partial\tilde U_i$ via mean curvature flow to obtain a family $\Sigma_t=\partial\Omega_t$ (where we have suppressed $i$ from the notation for clarity). As described in the previous section, \emph{suppose} that we can flow it until some time $t^*$ when it encloses a volume smaller than some constant $\nu>0$ independent of $i$. If that were the case, then since $m\ge \madm(M_i,g_i)\ge \mh(\Sigma_{t^*})$ (by Theorem \ref{thm:haw-adm}), Huisken's relative volume monotonicity (Proposition \ref{thm:huisken-monotonicity}) tells us that
\[ \phi_m(|\Sigma_0|) - |\tilde{U}_i| \ge \phi_m(|\Sigma_{t^*}|) - |\Omega_{t^*}| \ge -\nu.\] 
Then 
\begin{align}
\miso(\tilde{U}_i) &= \frac{2}{|\Sigma_0|} \left( |\tilde{U}_i| - \frac{1}{6\sqrt{\pi}}|\Sigma_0|^{3/2}\right)\nonumber\\
&\le \frac{2}{|\Sigma_0|} \left( \phi_m(|\Sigma_0|) +\nu - \frac{1}{6\sqrt{\pi}}|\Sigma_0|^{3/2}\right).\label{ineq_nu}
\end{align}
By Lemma \ref{lemma_lim_phi_m}, for large enough $|U|$, $|\tilde{U}_i|$ is large enough to ensure that this quantity is less than $m+\epsilon$, where $\epsilon$ is independent of $i$. Finally, putting everything together, 
\begin{align*}
\miso(N,h)&<\miso(U)+\epsilon\\
&< \miso(U_i)+2\epsilon\\
&\leq \miso(\tilde{U}_i)+2\epsilon\\
&< m+3\epsilon =  \liminf_{i\to\infty}\madm(M_i,g_i)+4\epsilon,
\end{align*}
which would give Theorem \ref{thm:main}.

Unfortunately, we cannot expect things to work so nicely, for a variety of reasons. For this proof to work, $\nu$ must be independent of $i$, and therefore it can only depend on $C^0$ properties of the metric. In particular, since singularities of mean curvature flow may form arbitrarily early, a weak flow is required. Moreover, we expect the weak mean curvature flow to become disconnected, in which case we no longer expect the Hawking mass to be bounded above by $\madm(M_i, g_i)$, as required by Proposition \ref{thm:huisken-monotonicity}, which is a more serious problem. We overcome this issue by noting that whenever the flow is smooth, we still have monotonicity on each component of the flow, and that when the flow disconnects the surface, this disruption essentially has ``good sign'' due to the convexity of $\phi_m(A)$ for large $A$. Unfortunately, this creates a new problem when boundary components become too small, especially when their areas dip below $16\pi m^2$, in which case  $\phi_m$ is not even well-defined. Because of this, we will introduce a modified weak mean curvature flow that ``freezes'' the components that have small area. Finally, we have to argue that our modified MCF eventually encloses a volume smaller than $\nu$, which must be independent of $i$, which is not obvious because of the components that have been frozen. In order to do this, we will show that the isoperimetric ratio of the flow is controlled.

In the end we will obtain the following refinement of one of the inequalities in Theorem~\ref{thm:iso-adm}, which will be a key ingredient in the proof of Theorem \ref{thm:main}. 
\begin{thm} \label{thm:iso-adm2}
Given constants $\mu_0 > 0$, $I_0 > 0$, $c_0>0$, there exists a constant $C$ depending only on $\mu_0, I_0,$ and $c_0$ with the following property. Let $(M,g)$ be a smooth asymptotically flat 3-manifold whose boundary is empty or minimal, with  nonnegative scalar curvature and no compact minimal surfaces in its interior, with $\madm(M,g) \leq \mu_0$. Let $\Omega$ be an outward-minimizing allowable region in $M$ with $C^{1,1}$ boundary $\partial \Omega$ that does not touch $\partial M$. Assume that $|\partial \Omega| \geq 36\pi \mu_0^2$, the isoperimetric ratio of $\Omega$ is at most $I_0$, and the isoperimetric constant\footnote{Note that this is $c(\Omega,g)$, not $c(M,g)$: the distinction is important in the proof of Theorem \ref{thm:main}, in which we will have control on the isoperimetric constant only on a bounded set. Here, $c(\Omega,g)$ is computed in the same way as $c(M,g)$, by neglecting the volume contribution of any fill-in regions in $\Omega$.} of $(\Omega,g)$ is at least $c_0$. Then \[ \miso(\Omega) \leq \madm(M,g) + \frac{C}{\sqrt{|\partial \Omega|}}. \]
\end{thm}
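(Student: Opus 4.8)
The plan is to run a modified weak mean curvature flow starting from $\Sigma_0 = \partial\Omega$, track the quantity $\phi_m(|\Sigma_t|) - |\Omega_t|$ along the flow (where $m = \madm(M,g) \le \mu_0$), and show that after a controlled amount of time the flow encloses volume at most some $\nu = \nu(\mu_0, I_0, c_0)$. Combining this with the monotonicity from Proposition~\ref{thm:huisken-monotonicity} (applied component-by-component when the flow is smooth) and the asymptotic expansion in Lemma~\ref{lemma_lim_phi_m} will yield the bound $\miso(\Omega) \le m + C/\sqrt{|\partial\Omega|}$ exactly as in the heuristic computation~\eqref{ineq_nu} of Section~\ref{outline}. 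Concretely, I would first invoke that $\Omega$ outward-minimizing implies the weak MCF $\Sigma_t = \partial\Omega_t$ stays outward-minimizing (Section~\ref{sec:weak_flow}), so by Theorem~\ref{thm:haw-adm}, $\mh(\Sigma_t) \le m$ on every component of $C^2$-regularity, making the per-component monotonicity available; the condition $|\partial\Omega| \ge 36\pi\mu_0^2 > 16\pi m^2$ guarantees $\phi_m(|\Sigma_0|)$ is defined, and the freezing mechanism keeps $\phi_m$ defined on all unfrozen components thereafter.

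The heart of the argument is to produce the volume bound $\nu$ depending only on $\mu_0, I_0, c_0$. Here I would use the isoperimetric constant hypothesis: $c(\Omega,g) \ge c_0$ means every allowable subregion satisfies $|\partial^*\Omega'|^{3/2} \ge c_0 |\Omega'|$. Since the weak flow decreases perimeter (in fact each smooth piece satisfies $\frac{d}{dt}|\Sigma_t| = -\int H_t^2 \le 0$, and jump times only decrease it further) and the frozen components contribute only a bounded amount of perimeter — one needs to choose the freezing threshold and show the total frozen perimeter is controlled by $\mu_0$ — the enclosed volume is forced down: $|\Omega_t| \le c_0^{-1} |\partial^*\Omega_t|^{3/2}$, and once $|\partial^*\Omega_t|$ drops below a threshold determined by $m$ (comparable to $16\pi m^2$, at which point monotonicity is essentially saturated and the unfrozen part is about to vanish or freeze), we get $|\Omega_t| \le c_0^{-1}(\text{threshold})^{3/2} =: \nu$. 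To see the flow actually reaches such a time, I would argue by the finiteness of the total time: either some unfrozen component shrinks to a point in finite time (standard for MCF with bounded geometry on the relevant compact region), or it freezes; iterating, after finitely many freezings the unfrozen perimeter is exhausted. The isoperimetric ratio hypothesis (ratio $\le I_0$) enters to control $|\Omega_0|$ relative to $|\Sigma_0|$ at the start, which is needed to make the right-hand side of~\eqref{ineq_nu} genuinely $m + O(|\partial\Omega|^{-1/2})$ with constant depending on $I_0$.

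I would then assemble the estimate: from monotonicity (valid across freezings because freezing a small-perimeter component only helps, by the convexity properties of $\phi_m$ established in Section~\ref{sec:convexity}) we get $\phi_m(|\Sigma_0|) - |\Omega| \ge -\nu$, hence
\[
\miso(\Omega) = \frac{2}{|\Sigma_0|}\left(|\Omega| - \frac{1}{6\sqrt\pi}|\Sigma_0|^{3/2}\right) \le \frac{2}{|\Sigma_0|}\left(\phi_m(|\Sigma_0|) + \nu - \frac{1}{6\sqrt\pi}|\Sigma_0|^{3/2}\right) = m + \frac{2\nu}{|\Sigma_0|} + O(|\Sigma_0|^{-1/2}),
\]
where Lemma~\ref{lemma_lim_phi_m} provides the last step with error uniform for $m \in [0,\mu_0]$; since $\nu$ and the implied constant depend only on $\mu_0, I_0, c_0$, we obtain $\miso(\Omega) \le \madm(M,g) + C|\partial\Omega|^{-1/2}$.

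The main obstacle I anticipate is making the volume-collapse argument fully quantitative and $i$-independent in the presence of the freezing mechanism: one must show that the total perimeter locked up in frozen components never exceeds a bound depending only on $\mu_0$ (this is where the Riemannian Penrose inequality, Theorem~\ref{thm:RPI}, and the outward-minimizing property should be invoked — a frozen component bounds a region whose boundary area is controlled by the mass), and simultaneously that the monotonicity statement $\frac{d}{dt}(\phi_m(|\Sigma_t|) - |\Omega_t|) \le 0$ survives both the disconnection jumps (good sign from convexity of $\phi_m$ for large $A$, needing the $A \ge 16\pi m^2$ regime to be respected) and the freezing operations. Carefully setting up the modified weak flow so that all these pieces cohere — in particular choosing the freezing threshold large enough that $\phi_m$ stays defined and convexity applies, yet small enough that the residual volume bound $\nu$ is genuinely uniform — is the delicate technical core, and is exactly what Sections~\ref{sec:weak_flow}–\ref{sec:mod_flow_prop} are built to handle.
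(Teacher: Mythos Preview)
Your overall architecture is right: run the modified weak flow, use the monotonicity of $\phi_m(|\partial^*\Omega_t|)-|\Omega_t|$ (Proposition~\ref{thm_monotonicity_general}), bound the terminal volume by a constant $\nu=\nu(\mu_0,I_0,c_0)$, and finish with Lemma~\ref{lemma_lim_phi_m}. The gap is in how you propose to obtain $\nu$. You want the \emph{total} frozen perimeter to drop below a threshold depending only on $\mu_0$, and then apply the isoperimetric inequality once. But the paper does not prove any such bound on total frozen perimeter, and there is no reason it should hold: the flow can disconnect into many frozen pieces, each of perimeter $\le 36\pi m^2$, with total perimeter a priori as large as $|\partial\Omega_0|$. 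Applying $|\Omega_T|\le c_0^{-1}|\partial^*\Omega_T|^{3/2}$ directly would then give a bound depending on $|\partial\Omega|$, not a uniform $\nu$. Relatedly, the Riemannian Penrose inequality is not used to bound frozen perimeter; it enters only in Lemma~\ref{lemma_area} to guarantee the flow eventually freezes completely.

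The paper's mechanism is different and is where $I_0$ actually enters. One shows (Corollary~\ref{cor_isoperimetric}, via Lemma~\ref{lemma_A_monotone}) that the \emph{isoperimetric ratio} $I(t)=|\partial^*\Omega_t|^{3/2}/|\Omega_t|$ is non-increasing along the modified flow, provided $\phi_m(|\partial\Omega_0|)\le|\Omega_0|$; thus $I(T)\le I_0$. Combining this with the \emph{per-component} perimeter bound $\alpha=36\pi m^2$ on $\Omega_T=U_\infty$ and the isoperimetric constant $c_0$, Lemma~\ref{lemma_isoperimetric} gives $|\Omega_T|\le c_0^{-3}\alpha^{3/2}I_0^2$, which is the uniform $\nu$. (The case $\phi_m(|\partial\Omega_0|)>|\Omega_0|$ is handled trivially from the definition of $\miso$, so the case split is harmless.) In short: $I_0$ is not used to control $|\Omega_0|$ at the start but to control the isoperimetric ratio of the terminal state, and the volume bound comes from the interplay of the small per-component perimeter with the controlled ratio, not from any smallness of total perimeter.
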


For our purposes, the significance of the above is that the constant $C$ depends only on coarse $C^0$ data of the metric.

\section{Convexity of $\phi_m$ and consequences}
\label{sec:convexity}

In this section we derive some key properties of $\phi_m$ that will be essential to establishing Huisken's relative volume monotonicity for a disconnected mean curvature flow (Theorem~\ref{thm_monotonicity_general}) and showing that the isoperimetric ratio of the flow is controlled (Corollary \ref{cor_isoperimetric}).

Note that $(m,A) \mapsto \phi_m(A)$ is continuous on $\{(m,A) \in \R^2\; |\; m \geq 0, A \geq 16\pi m^2\}$, and for $m \geq 0$ fixed, $A \mapsto \phi_m(A)$ is strictly increasing and twice-differentiable on $(16\pi m^2, \infty)$. It is intuitively clear that $\phi_m$ should be convex for large values of $A$, based on the $O(A^{3/2})$ asymptotics, but it cannot be convex for all $A$ since the derivative blows up to $+\infty$ at $16\pi m^2$. Precisely, we have the following.
\begin{lemma}
\label{lemma_concave_up} 
\begin{align*}
\phi''_m(A)< 0, &\quad\textrm{ for }\quad 16\pi m^2 < A< 36\pi m^2\\
\phi''_m(A) =  0,&\quad\textrm{ for }\quad A= 36\pi m^2  \\
\phi''_m(A)>0, &\quad\textrm{ for }\quad A> 36\pi m^2.
\end{align*}
\end{lemma}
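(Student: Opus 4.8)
The plan is to compute $\phi_m''(A)$ explicitly by parametrizing both the area $A$ and the volume $\phi_m$ by the Schwarzschild coordinate radius $r$, and then reduce the sign of $\phi_m''$ to the sign of an elementary quadratic. First I would dispose of the degenerate case $m=0$: there $\phi_0(A)=\frac{1}{6\sqrt\pi}A^{3/2}$ is strictly convex on $(0,\infty)$, and both thresholds $16\pi m^2$ and $36\pi m^2$ collapse to $0$, so the statement reduces to $\phi_0''>0$, which is immediate. Assume now $m>0$ and introduce the dimensionless variable $x=2r/m\in(1,\infty)$. From \eqref{area} one has $A=\pi m^2 (x+1)^4 x^{-2}$, so $x=1$ corresponds to the horizon $A=16\pi m^2$, and from $\frac{dA}{dx}=\pi m^2\,\frac{2(x+1)^3(x-1)}{x^3}>0$ the map $x\mapsto A$ is an increasing bijection from $(1,\infty)$ onto $(16\pi m^2,\infty)$. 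Using the expressions $\frac{d\phi_m}{dr}=4\pi r^2(1+\tfrac{m}{2r})^6$ and $\frac{dA}{dr}=8\pi r(1+\tfrac{m}{2r})^3(1-\tfrac{m}{2r})$ recorded in the proof of Proposition~\ref{thm:huisken-monotonicity}, one gets after simplification $\phi_m'(A)=\dfrac{d\phi_m/dr}{dA/dr}=\dfrac{m(x+1)^3}{4x(x-1)}$.

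Next I would differentiate once more in $x$. Since $\phi_m''(A)=(d\phi_m'/dx)/(dA/dx)$, a routine quotient-rule computation gives $\dfrac{d}{dx}\!\left[\dfrac{(x+1)^3}{x(x-1)}\right]=\dfrac{(x+1)^2(x^2-4x+1)}{x^2(x-1)^2}$, and combining this with the formula for $dA/dx$ above yields, after cancellation,
\[
\phi_m''(A)=\frac{x\,(x^2-4x+1)}{8\pi m\,(x-1)^3(x+1)}.
\]
For $x>1$ every factor in this expression other than $x^2-4x+1$ is positive, so $\operatorname{sign}\phi_m''(A)=\operatorname{sign}(x^2-4x+1)$. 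The quadratic $x^2-4x+1$ has roots $2\pm\sqrt3$; since $2-\sqrt3<1$, only $x=2+\sqrt3$ lies in $(1,\infty)$, so $x^2-4x+1$ is negative on $(1,2+\sqrt3)$, zero at $2+\sqrt3$, and positive on $(2+\sqrt3,\infty)$.

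Finally I would pull these thresholds back to the $A$-variable using the monotonicity of $A(x)$. The one computation needing care is verifying that $x=2+\sqrt3$ corresponds to $A=36\pi m^2$: there $\frac{(x+1)^4}{x^2}=\frac{(3+\sqrt3)^4}{(2+\sqrt3)^2}=\frac{252+144\sqrt3}{7+4\sqrt3}$, and since $(7+4\sqrt3)(7-4\sqrt3)=1$, multiplying through by $7-4\sqrt3$ gives $252\cdot 7-1728=36$, i.e.\ $A=36\pi m^2$. Together with the sign table for $x^2-4x+1$ this produces exactly the three cases claimed. There is no genuine obstacle in the argument---the content is a finite calculation---but the mildly unexpected fact that the inflection point sits precisely at $A=36\pi m^2$ is the one place to be careful, and it is of course the reason for the hypothesis $|\partial\Omega|\ge 36\pi\mu_0^2$ in Theorem~\ref{thm:iso-adm2}.
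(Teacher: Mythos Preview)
Your proof is correct and follows essentially the same approach as the paper: both compute $\phi_m'(A)$ via the chain rule from the Schwarzschild parametrization, differentiate once more, and reduce the sign of $\phi_m''$ to that of the quadratic $x^2-4x+1$ (equivalently $1-\tfrac{2m}{r}+\tfrac{m^2}{4r^2}$), whose relevant root $x=2+\sqrt3$ corresponds to $A=36\pi m^2$. The only cosmetic differences are that you work in the dimensionless variable $x=2r/m$ and carry the computation all the way to an explicit formula for $\phi_m''(A)$, whereas the paper stays in $r$ and stops at the sign of $\tfrac{d}{dr}\phi_m'(A)$; you also treat $m=0$ separately and verify the $A=36\pi m^2$ identification in more detail.
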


\begin{proof}
Starting with equations \eqref{area} and \eqref{volume} describing $A$ and $\phi_m$ as functions of the coordinate radius $r$, we obtain
\begin{align*}
\frac{dA}{dr} &= 
8\pi r \left( 1+ \frac{m}{2r}\right)^3
\left( 1- \frac{m}{2r}\right)\\
\frac{d\phi_m}{dr} &=  \left( 1+ \frac{m}{2r}\right)^6 4\pi r^2\\
\frac{d\phi_m}{dA} &= \frac{\left( 1+ \frac{m}{2r}\right)^3 r}{2\left(   1- \frac{m}{2r}\right)}\\
\frac{d}{dr}\frac{d\phi_m}{dA} 
&= 
 \frac{  \left( 1+ \frac{m}{2r}\right)^2}{2 \left( 1- \frac{m}{2r}\right)^2} 
 \left[1+  \frac{-2m}{r}+\frac{m^2}{4r^2} \right]. 
\end{align*}
Since this last expression has the same sign as $\phi_m''(A)$, we can see by solving the quadratic that it is zero when $r=\left(1+\frac{\sqrt{3}}{2}\right)m$, negative for smaller $r$, and positive for larger $r$. The value of $A$ corresponding to $r= \left(1+\frac{\sqrt{3}}{2}\right)m$ is $36\pi m^2$.
\end{proof}

We now present some useful consequences of the convexity of $\phi_m$ for $A\ge 36\pi m^2$.
\begin{lemma}
\label{lemma_phi_multi_monotonicity}
Let $n$ be a positive integer, and let $a_k \geq b_k \geq 36\pi m^2$ be real numbers for $k=1,\ldots, n$. Let $\gamma \geq 0$ be a constant. Then
\[\phi_m\left(\gamma+ \sum_{k=1}^n a_k\right) - \sum_{k=1}^n \phi_m(a_k) \geq \phi_m\left(\gamma+ \sum_{k=1}^n b_k\right) - \sum_{k=1}^n \phi_m(b_k).\]
\end{lemma}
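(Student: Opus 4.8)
The plan is to reduce everything to the convexity of $\phi_m$ on $[36\pi m^2,\infty)$ furnished by Lemma~\ref{lemma_concave_up}, via a ``change one summand at a time'' argument. The natural object to look at is the function
\[ \Psi(A_1,\dots,A_n) := \phi_m\!\left(\gamma + \sum_{k=1}^n A_k\right) - \sum_{k=1}^n \phi_m(A_k) \]
on the domain $D := [36\pi m^2,\infty)^n$. The desired inequality is exactly $\Psi(a_1,\dots,a_n) \geq \Psi(b_1,\dots,b_n)$, and since $(a_1,\dots,a_n)$ dominates $(b_1,\dots,b_n)$ coordinatewise and both points lie in $D$, it suffices to show that $\Psi$ is nondecreasing in each variable separately on $D$.

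To see this, fix $j$ and differentiate in $A_j$ at an interior point of $D$: $\partial_j \Psi = \phi_m'\!\left(\gamma + \sum_{k} A_k\right) - \phi_m'(A_j)$. Now $\gamma + \sum_k A_k \geq A_j$ because $\gamma \geq 0$ and every $A_k \geq 36\pi m^2 \geq 0$; and $\phi_m'$ is nondecreasing on $[36\pi m^2,\infty)$, which is precisely the content $\phi_m'' \geq 0$ of Lemma~\ref{lemma_concave_up}. Hence $\partial_j \Psi \geq 0$. Integrating along the coordinate-axis-parallel polygonal path in $D$ from $(b_1,\dots,b_n)$ to $(a_1,\dots,a_n)$ — equivalently, telescoping $\Psi(a_1,\dots,a_j,b_{j+1},\dots,b_n) - \Psi(a_1,\dots,a_{j-1},b_j,\dots,b_n)$ over $j=1,\dots,n$, each difference being of the form $[\phi_m(\gamma'+a_j)-\phi_m(a_j)] - [\phi_m(\gamma'+b_j)-\phi_m(b_j)] \geq 0$ for an appropriate $\gamma' \geq 0$ by the one-variable case — gives the claim.

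There is no real obstacle here; the inequality is just an incarnation of convexity. The only point requiring a moment's care is the regularity of $\phi_m$ at the endpoint $A = 36\pi m^2$ (that is, $r = (1+\tfrac{\sqrt3}{2})m$), since Lemma~\ref{lemma_concave_up} is phrased on the open interval $A > 16\pi m^2$: one reads off from the explicit formula $\tfrac{d\phi_m}{dA} = \tfrac{(1+m/2r)^3 r}{2(1-m/2r)}$ in the proof of Lemma~\ref{lemma_concave_up} that $\phi_m'$ is finite and continuous there, so ``$\phi_m'$ nondecreasing on the closed half-line $[36\pi m^2,\infty)$'' is legitimate; by continuity of $\Psi$ on $D$ the monotonicity conclusion then extends to the boundary. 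If one wishes to sidestep this discussion entirely, one runs the telescoping argument above and invokes only the single-variable statement $\phi_m(\gamma'+a)-\phi_m(a) \geq \phi_m(\gamma'+b)-\phi_m(b)$ for $a \geq b \geq 36\pi m^2$ and $\gamma' \geq 0$, which follows by noting that $t \mapsto \phi_m(\gamma'+t)-\phi_m(t)$ has nonnegative derivative on $(36\pi m^2,\infty)$ and is continuous up to the endpoint. The degenerate case $m=0$ (where $36\pi m^2 = 0$ and $\phi_0(A) = \tfrac{1}{6\sqrt\pi}A^{3/2}$ is merely $C^1$, not $C^2$, at $A=0$) is handled identically, since $\phi_0$ is still convex and $C^1$ on $[0,\infty)$.
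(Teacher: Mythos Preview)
Your proof is correct and follows essentially the same route as the paper: both reduce to the single-variable fact that $x \mapsto \phi_m(\gamma'+x) - \phi_m(x)$ is nondecreasing on $[36\pi m^2,\infty)$ (a consequence of the convexity in Lemma~\ref{lemma_concave_up}), and then telescope by changing one coordinate at a time. Your added remarks on endpoint regularity at $A = 36\pi m^2$ and the $m=0$ case are a welcome bit of extra care that the paper leaves implicit.
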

\begin{proof}
By Lemma \ref{lemma_concave_up}, $x \mapsto \phi_m(\gamma'+x) - \phi_m(x)$ is increasing on $[36\pi m^2, \infty)$ for any constant $\gamma'\geq0$. Using this repeatedly,
\begin{align*}
\phi_m\left(\gamma+ \sum_{k=1}^n a_k\right) - \sum_{k=1}^n \phi_m(a_k) &\geq \phi_m\left(\gamma+ b_1+\sum_{k=2}^n a_k\right) - \phi_m(b_1)-\sum_{k=2}^n \phi_m(a_k)\\
&\geq \phi_m\left(\gamma+ b_1+b_2+\sum_{k=3}^n a_k\right) - \phi_m(b_1)-\phi_m(b_2)-\sum_{k=3}^n \phi_m(a_k)\\
&\geq \ldots\\
&= \phi_m\left(\gamma+ \sum_{k=1}^n b_k\right) - \sum_{k=1}^n \phi_m(b_k).
\end{align*}
\end{proof}

The following is a monotonicity result that will be used for estimating the isoperimetric ratio in Corollary \ref{cor_isoperimetric}.
\begin{lemma}
\label{lemma_A_monotone}
Let $a \geq 0$ be a constant. Then for $A \geq 36\pi m^2$,
$$\frac{d}{dA} \left(\frac{A^{3/2}}{a+\phi_m(A)}\right) > 0.$$
\end{lemma}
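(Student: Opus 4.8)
\textbf{Proof plan for Lemma \ref{lemma_A_monotone}.}

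The plan is to compute the derivative directly and reduce the positivity claim to an inequality about $\phi_m$ and $\phi_m'$ that follows from the explicit Schwarzschild formulas. Writing $F(A) = A^{3/2}/(a + \phi_m(A))$, the quotient rule gives
\[
F'(A) = \frac{\tfrac{3}{2}A^{1/2}\bigl(a + \phi_m(A)\bigr) - A^{3/2}\phi_m'(A)}{\bigl(a + \phi_m(A)\bigr)^2},
\]
so since the denominator is positive and $a \geq 0$, it suffices to show that $\tfrac{3}{2}\phi_m(A) - A\phi_m'(A) > 0$ for $A \geq 36\pi m^2$; the extra $\tfrac{3}{2}a$ term only helps. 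Equivalently, I want $\tfrac{d}{dA}\bigl(\phi_m(A)/A^{3/2}\bigr) < 0$, i.e. the isoperimetric ratio $A^{3/2}/\phi_m(A)$ of Schwarzschild coordinate spheres is strictly increasing in $A$ on this range. This is geometrically plausible: as $A \to \infty$ the ratio tends to $6\sqrt\pi$ from below (by Lemma \ref{lemma_lim_phi_m}, $\phi_m(A) - \tfrac{1}{6\sqrt\pi}A^{3/2} \sim \tfrac{m}{2}A > 0$), and near the horizon the spheres are better-than-Euclidean isoperimetrically.

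The cleanest route is to pass to the coordinate radius $r$ as the paper does elsewhere. Using $A = 4\pi r^2(1 + \tfrac{m}{2r})^4$ and the formulas for $dA/dr$ and $d\phi_m/dr$ already recorded in the proof of Lemma \ref{lemma_concave_up}, I would express $G(r) := \tfrac{3}{2}\phi_m - A\phi_m' $ as a function of $r$ and check its sign. One has $A\phi_m' = A\cdot \tfrac{d\phi_m/dr}{dA/dr}$, which is an explicit rational expression in $r$ and $m$, and $\tfrac{3}{2}\phi_m(r)$ is the integral in \eqref{volume}; so $G$ is (integral) minus (elementary function). I would either (a) differentiate $G$ in $r$ and show $G'(r)$ has a sign that, combined with the boundary behavior $G \to +\infty$-order growth as $r \to \infty$ and the value at $r = (1+\tfrac{\sqrt3}{2})m$, forces $G > 0$; or (b) observe that $G(r)/(\text{something positive})$ can be written so that its $r$-derivative is manifestly signed. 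A slicker variant: since $\tfrac{d}{dA}(A^{3/2}) = \tfrac{3}{2}A^{1/2}$ and we want to compare this with $\phi_m'(A)$ up to the factor $\phi_m(A)/A$, note $\phi_m'(A) = \tfrac{(1 + m/2r)^3 r}{2(1 - m/2r)}$ (recorded above) while $\tfrac{3}{2}A^{1/2} = 3\sqrt\pi\, r(1+m/2r)^2$; so the desired inequality $A\phi_m'(A) < \tfrac32 \phi_m(A)$ becomes, after substituting $A = 4\pi r^2(1+m/2r)^4$,
\[
\frac{2\pi r^3 (1+\tfrac{m}{2r})^7}{1 - \tfrac{m}{2r}} < \frac{3}{2}\int_{m/2}^r 4\pi \rho^2 \Bigl(1 + \frac{m}{2\rho}\Bigr)^6 \, d\rho.
\]
I would verify this by showing the difference (RHS minus LHS), as a function of $r \geq (1+\tfrac{\sqrt3}{2})m$, is positive: it suffices to check it is nonnegative at $r = (1+\tfrac{\sqrt3}{2})m$ (or handle the endpoint of the stated range directly) and that its $r$-derivative is positive, the latter reducing to an algebraic inequality in the single variable $s = m/2r \in (0, \tfrac{1}{2 + \sqrt3})$ after dividing through by common positive powers of $r$ and $(1+s)$.

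\textbf{Main obstacle.} The content is entirely in that final one-variable algebraic inequality: after clearing denominators it becomes a polynomial inequality in $s \in (0, \tfrac{1}{2+\sqrt3})$, and one must confirm the relevant polynomial stays of the correct sign on that interval (rather than, say, dipping negative near an endpoint). I expect this to be routine but slightly delicate bookkeeping — the natural pitfall is the endpoint $A = 36\pi m^2$ itself, where $\phi_m'' = 0$, so monotonicity of $F$ is not automatic there and the strict inequality must be extracted carefully from the first-order information. An alternative, softer argument avoiding the polynomial grind: combine Lemma \ref{lemma_concave_up} (so $\phi_m$ is convex on $[36\pi m^2,\infty)$ with $\phi_m'' (36\pi m^2) = 0$) with the asymptotic lower bound $\phi_m(A) \geq \tfrac{1}{6\sqrt\pi}A^{3/2}$ coming from Bray's isoperimetric characterization and Lemma \ref{lemma_lim_phi_m}; convexity plus the correct sign of the leading correction should pin down the sign of $\tfrac32\phi_m - A\phi_m'$ without computing it explicitly. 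I would try the soft argument first and fall back on the explicit computation if the constants do not cooperate.
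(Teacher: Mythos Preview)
Your plan is correct and matches the paper's proof almost exactly: reduce via the quotient rule to showing $\phi_m(A) - \tfrac{2}{3}A\,\phi_m'(A) > 0$ for $A \ge 36\pi m^2$, prove this quantity is increasing by computing its $A$-derivative through the $r$-variable (which collapses to the manifestly positive expression $\tfrac{4}{3}\pi r^2 \tfrac{dr}{dA}(1+\tfrac{m}{2r})^6 \cdot \tfrac{m}{r(1-m/2r)^2}$, not a polynomial grind), and then check positivity at the endpoint $A = 36\pi m^2$. The one point you misjudged is where the work lies: the derivative step is clean, whereas the endpoint value does not simplify nicely --- the paper handles it by numerical evaluation at $m=1$ (obtaining $\approx 19.6$) together with the scaling invariance of Schwarzschild; your proposed ``soft'' alternative is unlikely to help, since $\phi_m(A) \ge \tfrac{1}{6\sqrt\pi}A^{3/2}$ fails near the horizon and would itself need to be checked at $A = 36\pi m^2$.
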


\begin{proof}
\begin{align*}
\frac{d}{dA} \left(\frac{A^{3/2}}{a+\phi_m(A)}\right) &= \frac{3}{2}A^{1/2}\left(a+\phi_m(A)-\frac{2}{3} A\frac{d\phi_m}{dA} \right)(a+\phi_m(A))^{-2}.
\end{align*}
We complete the proof by establishing that $\phi_m(A)-\frac{2}{3} A\frac{d\phi_m}{dA}>0$ for $A \geq 36\pi m^2$. This is done in two parts. First, this function is increasing on $(16\pi m^2, \infty)$: borrowing our formulae for $\frac{d\phi_m}{dA}$ and $\frac{d}{dr} \frac{d\phi_m}{dA}$ from the proof of Lemma \ref{lemma_concave_up}, we compute:
\begin{align*}
\frac{d}{dA} \left(\phi_m(A)-\frac{2}{3} A\frac{d\phi_m}{dA}\right)&=\frac{1}{3}\frac{dr}{dA}\left( \frac{d \phi_m}{dr} -2A \frac{d}{dr} \frac{d\phi_m}{dA}\right)\\
&= \frac{1}{3}\frac{dr}{dA}\left(4\pi r^2\left(1+\frac{m}{2r}\right)^6-4\pi r^2 \frac{\left(1+\frac{m}{2r}\right)^6}{\left(1-\frac{m}{2r}\right)^2}\left(1-\frac{2m}{r}+\frac{m^2}{4r^2}\right)\right)\\
&=\frac{4}{3}\pi r^2 \frac{dr}{dA} \left(1+\frac{m}{2r}\right)^6\left( \frac{m}{r\left(1-\frac{m}{2r}\right)^2}\right)>0, \qquad \text{ for } r > m/2.
\end{align*}
Second, using Mathematica, it was verified that $\phi_m(A)-\frac{2}{3} A\frac{d\phi_m}{dA} \approx 19.6 > 0$ at the value $A=36\pi$ when $m=1$. Thus, since $\phi_m(A)-\frac{2}{3} A\frac{d\phi_m}{dA}$ is defined purely in terms of the geometry of the Schwarzschild manifolds of mass $m \in (0,\infty)$, which differ only by constant rescalings, this quantity is positive at $A=36\pi m^2$.
\end{proof}

\section{A modified weak mean curvature flow}
\label{sec:weak_flow}

As explained in Section \ref{sec:outline}, it will be necessary to use a weak version of mean curvature flow in order to flow past singularities. The level set formulation of weak mean curvature flow in $\R^n$, $n \geq 3$, was pioneered separately by L.C.\ Evans and J.\ Spruck \cite{Evans-Spruck:1991} and Y.G.\ Chen, Y.\ Giga, and S.\ Goto \cite{Chen-Giga-Goto:1991}. When the flow starts at a smooth, strictly mean-convex initial surface $\partial K_0$, with $K_0$ compact, the flow moves inward, and so the level set formulation can be described in terms of the \emph{arrival time} function $u:K_0 \longrightarrow [0,\infty]$, which assigns to each point $x\in K_0$ the unique time $u(x)$ at which the flow reaches that point (or $+\infty$ if the point is never reached).  In particular, $u\geq 0$ vanishes precisely on $\partial K_0$. In the case of smooth MCF, the level sets $u^{-1}(t)$ evolving by MCF translates into $u$ satisfying the equation   
\begin{equation}
\label{eqn:weak_MCF}
\Div\left( \frac{\nabla u}{|\nabla u|}\right) = \frac{-1}{|\nabla u|}.
\end{equation}
From Theorem 7.4 of \cite{Evans-Spruck:1991}, it is known that there exists a locally Lipschitz\footnote{It is now known that $u$ is actually twice differentiable \cite{Colding-Minicozzi:2015}, at least in $\R^n$.} function $u\geq 0$ satisfying (\ref{eqn:weak_MCF}) in a weak sense on $K_0$ (whose details are not essential to this work) with boundary condition $0$ at $\partial K_0$. Ilmanen proved that the same result in the setting of Riemannian manifolds with a lower bound on Ricci curvature  \cite[Theorem 6.4]{Ilmanen:1992}. We will be interested in running the weak MCF in the following situation.

\begin{assumption}
\label{assumption1}
$(M,g)$ is a smooth asymptotically flat 3-manifold whose boundary is empty or minimal, with nonnegative scalar curvature and no compact minimal surfaces in its interior. $m \geq 0$ is the ADM mass of $(M,g)$. The initial region $K_0$ (in $M^{\mathrm{fill}}$) is the closure of an outward-minimizing allowable region $\Omega_0$ whose boundary $\partial \Omega_0$ is smooth, strictly mean-convex, and disjoint from $\partial M$.
\end{assumption}

By Ilmanen's result  \cite[Theorem 6.4]{Ilmanen:1992}, under Assumption \ref{assumption1}, we can use $K_0\subset M^{\mathrm{fill}}$  as our initial data for a mean-convex level set mean curvature flow.  Note that since the initial region $K_0$ is compact with smooth boundary, the level set flow agrees with MCF for at least a short time. Although $u$ is defined on the filled-in manifold $K_0 \cap M^{\mathrm{fill}}$, we will later see that we may restrict $u$ to $K_0 \cap M$.

\subsection{Regularity properties of the level set flow}
We will require a number of regularity properties of mean-convex level set flow. 
Following \cite{White:2000}, let 
\[ \mathcal{K} =\{(x,t) \in K_0 \times [0,\infty) \; | \; u(x) \geq t\}, \] 
be the subset of ``spacetime'' contained by the flow. A point $(x,t) \in \partial \mathcal{K}$ is a \emph{regular point} if $\mathcal{K}$ is locally a smooth manifold-with-boundary near $(x,t)$ and the tangent plane to $\partial \mathcal{K}$ is not horizontal at $(x,t)$. Let $\sing(\mathcal{K})$ consist of the points of $\partial \mathcal{K}$ that are not regular (which must have $t>0$). 
By White's regularity theorem for mean convex level set flow \cite[Theorem 1.1]{White:2000}, the parabolic Hausdorff dimension of $\sing(\mathcal{K})$ is at most one. We define the  \emph{spatial singular set} $\mathcal{S}\subset K_0$ to be the projection of $\sing(\mathcal{K})$ onto the spatial factor.

\begin{lemma}\label{lemma:sing_set} Under Assumption \ref{assumption1}, for the solution $u$ to level set flow on $K_0$, 
\begin{enumerate}
\item the singular set $\mathcal{S}$ has zero Lebesgue measure in $K_0$, and
\item each level set $u^{-1}(t)$ has zero Lebesgue measure in $K_0$ (\textit{i.e.}, the flow is ``non-fattening.'')
\end{enumerate}
\end{lemma}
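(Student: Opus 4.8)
The plan is to derive both parts of the lemma from White's regularity theorem recalled above, using only two further facts already in hand: that the arrival-time function $u$ is continuous (indeed locally Lipschitz, by \cites{Evans-Spruck:1991,Ilmanen:1992}), and that near a regular point of $\partial\mathcal{K}$ the set $\partial\mathcal{K}$ is a smooth spacetime hypersurface with non-horizontal tangent plane --- equivalently, writing $\partial\mathcal{K}$ locally as a graph $\{t=u(y)\}$, that $u$ is smooth there with nowhere-vanishing spatial gradient.

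For (1): consider the spatial projection $\pi\colon K_0\times[0,\infty)\to K_0$, $\pi(x,t)=x$. In local coordinates, $\pi$ is $1$-Lipschitz from the parabolic metric $d_P\big((x,t),(y,s)\big)=\max\{|x-y|,\,|t-s|^{1/2}\}$ to the Euclidean metric, since $|x-y|\le d_P\big((x,t),(y,s)\big)$. A Lipschitz map cannot raise Hausdorff dimension, so, as $\sing(\mathcal{K})$ has parabolic Hausdorff dimension at most $1$ by White's theorem, the image $\mathcal{S}=\pi\big(\sing(\mathcal{K})\big)$ has Hausdorff dimension at most $1$ in the $3$-manifold $K_0$. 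In particular $\mathcal{S}$ is $\mathcal{H}^3$-null, i.e.\ has zero Lebesgue measure.

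For (2): since $u^{-1}(0)=\partial\Omega_0$ is a smooth surface, it is enough to treat $t\in(0,\infty)$ (the value $t=\infty$ does not give a level set). Decompose $u^{-1}(t)=\big(u^{-1}(t)\cap\mathcal{S}\big)\cup\big(u^{-1}(t)\smallsetminus\mathcal{S}\big)$; the first set is null by (1). For the second, fix $x\in u^{-1}(t)\smallsetminus\mathcal{S}$. Since $u$ is continuous with $u(x)=t$, the spacetime point $(x,t)$ lies on $\partial\mathcal{K}$: it belongs to $\mathcal{K}$, while $(x,t+\varepsilon)\notin\mathcal{K}$ for every $\varepsilon>0$. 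Moreover $x\notin\mathcal{S}$ forces the entire fiber $\{x\}\times[0,\infty)$ to miss $\sing(\mathcal{K})$, so $(x,t)$ is a regular point. As $t>0$ implies $x\in\Omega_0=\interior K_0$, near $(x,t)$ the set $\partial\mathcal{K}$ is a smooth hypersurface with non-horizontal tangent plane, hence $u$ is smooth with $\nabla u\ne 0$ on a spatial neighborhood $U_x\ni x$; then $u^{-1}(t)\cap U_x$ is a smooth $2$-submanifold of the $3$-manifold $U_x$ and therefore Lebesgue-null. Covering $u^{-1}(t)\smallsetminus\mathcal{S}$ by such neighborhoods and extracting a countable subcover (using second countability of $K_0$) exhibits it as a countable union of null sets. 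Hence $u^{-1}(t)$ has zero Lebesgue measure, which is the asserted non-fattening; this last property is in any case also contained in White's results \cite{White:2000}.

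Given White's partial regularity theorem the argument is short, so I do not anticipate a serious obstacle. The points that deserve care are: (a) that White's theorem, originally formulated for flows in Euclidean space, is being applied in the asymptotically flat Riemannian setting --- which we take for granted, exactly as in the statement recalled above; and (b) the measure-theoretic bookkeeping relating parabolic Hausdorff dimension in spacetime to ordinary Lebesgue measure in $K_0$, which the Lipschitz-projection observation in (1) handles. A third, routine point is the translation in (2) of ``non-horizontal tangent plane'' into smoothness and non-degeneracy of $u$, which is immediate from the local graphical description of $\partial\mathcal{K}$.
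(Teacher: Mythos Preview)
Your proof is correct and follows essentially the same approach as the paper's. Both argue that the projection of a set of parabolic Hausdorff dimension at most one has ordinary Hausdorff dimension at most one (you make the Lipschitz-projection reasoning explicit), and both deduce non-fattening by splitting $u^{-1}(t)$ into its intersection with $\mathcal{S}$ and its complement, observing that the latter is a smooth $2$-surface because $\partial\mathcal{K}$ meets the time slice transversally at regular points. Your version simply spells out more of the measure-theoretic bookkeeping (the countable subcover) and routes the transversality through ``$u$ smooth with $\nabla u\ne 0$'' rather than stating it directly as a transverse intersection; these are equivalent once one uses that in the mean-convex setting $\partial\mathcal{K}$ is globally the graph of $u$.
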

\begin{proof}
Since $\sing(\mathcal{K})$ has parabolic Hausdorff dimension at most one, its projection $\mathcal{S}$ has Hausdorff dimension at most one, and thus its 3-dimensional Hausdorff measure is zero, proving the first claim.  (A version of this claim was proved directly in \cite[Lemma 2.3]{Metzger-Schulze:2008} for the case $M=\R^n$.)

Now we address the second claim, which Ilmanen dubbed ``non-fattening'' \cite[Section 11]{Ilmanen:1994}. For each $t$, we know that $u^{-1}(t)\smallsetminus \mathcal{S}$ is a smooth $2$-surface, being the projection of the transverse intersection of the regular set of $\mathcal{K}$ with the $K_0 \times \{t\}$ slice. In particular, $u^{-1}(t) \smallsetminus \mathcal{S}$ has Lebesgue measure zero in $M$. Claim (2) now follows from Claim (1). 
\end{proof}

Define the compact sets $K_t = \{x \in K_0 \;|\; u(x) \geq t\}$ and the open sets $\Omega_t =\{x \in K_0 \;|\; u(x) > t\}$. By the second part of the previous lemma,
$\partial^* K_t = \partial^* \Omega_t$ for all $t \geq 0$, and we define $\Sigma_t$ to be this common reduced boundary. One may think of $\Sigma_t$ as evolving by ``weak MCF.'' 

Define the set of \emph{singular times} to be the projection of $\sing(\mathcal{K})$ to the time factor. All other times are \emph{regular times}, and at a regular time $t$, $\partial K_t$ is a smooth 2-manifold or empty. Note that at any regular time $t$, we have $\Sigma_t = u^{-1}(t) = \partial K_t =\partial\Omega_t,$ and that on any interval of regular times, the flow agrees with the classical smooth mean curvature flow. 

We have the following additional regularity properties for mean-convex mean curvature flow under Assumption \ref{assumption1}, established by White:
\begin{enumerate}
\item The set of singular times has Lebesgue measure zero in $\R$ \cite[Corollary following Theorem 1.1]{White:2000}.
\item  $\Omega_t$ is outward-minimizing (see below). 
\item $K_{t_2} \subset \interior(K_{t_1})$ if $t_2 > t_1$ \cite[Theorem 3.1]{White:2000}.
\item If $K \subset K'$ are compact sets, then the level set flow of $K$ for time $t$ is contained in the level set flow of $K'$ for time $t$ (see \cite{White:2000}, section 2.1).
\item The perimeter of $\Omega_t$ is non-increasing (follows from (2) above).
\item Long-term behavior: Either the flow vanishes in finite time, or $\Sigma_t$ converges as $t\to\infty$ to a union of minimal surfaces (and the convergence is smooth on the set of regular times) \cite[Theorem 11.1]{White:2000}. 
Under Assumption \ref{assumption1}, we know that there are no interior compact minimal surfaces in $M$.  Furthermore, by (4) and the fact that the mean curvature flow of the minimal surface $\partial M$ is stationary, $\Sigma_t$ may not cross $\partial M$ into the fill-in region. Therefore it must be the case that  $\bigcap_{t>0} K_t$ is the closure of the fill-in regions that are contained in $K_0$. In particular, the regions $\Omega_t$ are allowable, and $u$ is finite on $K_0$ except on the closure of the fill-in regions contained in $K_0$. Thus, we may regard $K_0$ as a subset of $M$, and restrict $u$ accordingly, and say that $u$ is finite on $K_0$ except at $\partial M \cap K_0$.  
\end{enumerate}

To justify (2), we have $\Omega_t \subset \tilde \Omega_t \subset \Omega_0$ by definition of minimizing hull, since $\Omega_0$ is outward-minimizing. By \cite[Theorem 3.5]{White:2000}, $\partial \tilde \Omega_t \smallsetminus \partial \Omega_t \subset K_t$, which implies $\tilde \Omega_t \subset K_t$. Since $u^{-1}(t)$ has zero Lebesgue measure by the second part of Lemma \ref{lemma:sing_set}, we conclude that $\tilde \Omega_t= \Omega_t$.

\subsection{Further properties of level set flow}
The main goal of this section is to develop a modified version of the level set flow as explained in Section \ref{sec:outline}. Before doing so, there are additional properties of level set flow we need to establish. Henceforth we use ``$P$'' to denote the perimeter (Hausdorff 2-measure of the reduced boundary).

\begin{prop}
\label{prop_u_properties}
Under the hypotheses of Assumption \ref{assumption1}, the level set flow satisfies the following properties:
\begin{enumerate}
\item[(i)] If $O \subset K_0$ is an open subset of $M$, then the infimum of $u$ on $O$ is not attained. 
\item[(ii)] $\ol{\Omega_{t_2}} \subset \Omega_{t_1}$ if $t_2 > t_1$.
\item[(iii)] $t \mapsto |\Omega_t|$ is absolutely continuous and $t \mapsto \Omega_t$ is continuous in the sense that 
$\lim_{t \to t_0} |\Omega_{t} \triangle \Omega_{t_0}| = 0$.
\item[(iv)] Let $\{U_i\}$ denote a finite or countably infinite collection of disjoint open sets such that each $U_i$ is a component of $\Omega_{t_i}$ for some $t_i \geq 0$ (where the $t_i$ can be repeated, but the $U_i$ must be distinct).
Then: 
$$P\left(\bigcup_i U_i\right) = \sum_i P( U_i).$$
\item[(v)] Suppose $t_2 > t_1$ and that $\Omega_{t_2}'$ and $\Omega_{t_1}'$ are (finite or countable) unions of components of $\Omega_{t_2}$ and $\Omega_{t_1}$, respectively. If $\Omega_{t_2}' \subset \Omega_{t_1}'$, then $P( \Omega_{t_2}') \leq P( \Omega_{t_1}')$.

\end{enumerate}
\end{prop}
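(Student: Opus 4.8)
The plan is to treat the five items more or less independently, since each draws on different standard facts about mean-convex level set flow, and then to use the earlier items to bootstrap the later ones. For (i), I would argue by contradiction: if the infimum $t_0$ of $u$ on an open set $O$ were attained at a point $x_0$, then $x_0$ would lie on $\partial K_{t_0}$ with an entire open neighborhood in $O$ contained in $K_{t_0}$, forcing $K_{t_0}$ to have an interior point of $\partial \mathcal{K}$ where the spacetime boundary is locally horizontal—contradicting either White's avoidance/strong-maximum principle for mean-convex flow or property (3) above ($K_{t_2}\subset\interior(K_{t_1})$ for $t_2>t_1$), which says the regions strictly shrink. I would phrase it via the strong maximum principle for the degenerate elliptic equation \eqref{eqn:weak_MCF}: a local interior minimum of $u$ is incompatible with the flow being non-stationary there, and stationarity would mean $\partial K_{t_0}$ is minimal, contradicting Assumption \ref{assumption1}.

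For (ii), I would combine property (3), $K_{t_2}\subset\interior(K_{t_1})$, with the non-fattening Lemma \ref{lemma:sing_set}(2): $\ol{\Omega_{t_2}}\subset K_{t_2}\subset\interior(K_{t_1})$, and since $K_{t_1}\smallsetminus\Omega_{t_1}=u^{-1}(t_1)$ has measure zero, one upgrades $\interior(K_{t_1})$ to $\Omega_{t_1}$ (a measure-zero set cannot separate an open set, so the open set $\interior(K_{t_1})$ minus a closed measure-zero set that is actually $u^{-1}(t_1)\cap\interior(K_{t_1})$ equals $\Omega_{t_1}$ up to the boundary behavior; more carefully, $\interior(K_{t_1})\smallsetminus u^{-1}(t_1)\subset\Omega_{t_1}$ and the closed set $\ol{\Omega_{t_2}}$ misses $u^{-1}(t_1)$ by property (3) applied with an intermediate time). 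For (iii), absolute continuity of $t\mapsto|\Omega_t|$ follows from the coarea formula: $|\Omega_0|-|\Omega_t|=\int_{\{u<t\}}\,dV$ and $\frac{d}{dt}|\Omega_t|=-\int_{\Sigma_t}|\nabla u|^{-1}$ at regular times, which is locally integrable; alternatively, monotonicity of $t\mapsto|\Omega_t|$ plus the fact that $|\Omega_t|$ has no jumps (a jump would mean a positive-measure level set, contradicting non-fattening) gives continuity, and the $L^1$-continuity $|\Omega_t\triangle\Omega_{t_0}|\to 0$ then follows from monotone nesting (ii) plus continuity of the measure.

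Items (iv) and (v) are the geometric heart, and I expect (iv) to be the main obstacle. The subtlety is that the $U_i$ are components of possibly different time-slices, and one must show their reduced boundaries are essentially disjoint so that perimeter is additive. The key point is that if $U$ is a component of $\Omega_{t_i}$ and $U'$ a component of $\Omega_{t_j}$ with, say, $t_i\le t_j$, then by nesting (ii) either $U'\subset U$ or $U'\cap U=\emptyset$ (two components of nested open sets are either nested or disjoint), and in the nested case $\ol{U'}\subset U$ by (ii)/(i)—crucially, $\partial U'$ does not touch $\partial U$, because $\partial U\subset u^{-1}(t_i)$ and $\partial U'\subset u^{-1}(t_j)$ while $\ol{\Omega_{t_j}}\subset\Omega_{t_i}$ separates these. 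Hence the reduced boundaries $\partial^* U_i$ are pairwise disjoint (those at equal times are disjoint as distinct components; those at different times are separated by the strict nesting), and additivity of perimeter for a disjoint union whose boundaries meet in a null set is standard. Then (v) is a direct consequence: writing $\Omega'_{t_2}=\bigcup U_i$, $\Omega'_{t_1}=\bigcup V_j$ as unions of components, outward-minimizingness of $\Omega_{t_2}$ (property (2)) applied componentwise—or more precisely, the fact that $\Omega'_{t_2}$, being a union of components of the outward-minimizing $\Omega_{t_2}$ contained in the larger set $\Omega'_{t_1}$, cannot have larger perimeter—gives $P(\Omega'_{t_2})\le P(\Omega'_{t_1})$; here one uses (iv) to split both perimeters into component contributions and compares using the level-set flow's outward-minimizing property together with the competitor $\Omega'_{t_1}$. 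The one thing requiring care is that outward-minimizing was stated for $\Omega_t$ itself, not for unions of its components, so I would instead argue that $\Omega'_{t_1}$ is a valid competitor for the minimizing-hull characterization of $\Omega'_{t_2}$ restricted to each component, invoking that the level set flow of each component is contained in the flow of the whole (property (4)) and is itself outward-minimizing.
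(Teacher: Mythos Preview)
Your treatments of (i)--(iii) are workable, and (v) is on the right track, though the paper proceeds somewhat differently throughout: for (i) it uses the comparison principle with a small strictly mean-convex ball via property (4) rather than a maximum-principle argument for the degenerate equation (which would need care since $u$ is only Lipschitz); for (ii) it deduces $\interior(K_{t_1}) \subset \Omega_{t_1}$ directly from (i) rather than via an intermediate time; and for (v) it runs the level set flow from $\Omega'_{t_1}$, observes that by uniqueness the result at time $t_2$ is $\Omega'_{t_1} \cap \Omega_{t_2}$, and applies perimeter monotonicity together with (iv)---cleaner than trying to verify outward-minimizingness for unions of components, which you correctly flag as not immediate.

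The genuine gap is in (iv). You correctly identify that one must show the reduced boundaries $\partial^* U_i$ are pairwise $\mathcal{H}^2$-almost disjoint, and for components at \emph{different} times your strict-nesting argument via (ii) in fact gives more: their closures are disjoint. But for two distinct components of the \emph{same} $\Omega_t$ you offer no argument beyond ``disjoint as distinct components,'' and this is exactly where perimeter additivity can fail: disjoint sets of finite perimeter whose reduced boundaries overlap on a set of positive $\mathcal{H}^{2}$-measure satisfy $P(U_i \cup U_j) < P(U_i) + P(U_j)$, since the opposing measure-theoretic normals cancel. One could rescue your route by observing that two components of $\Omega_t$ can only touch on the singular set $\mathcal{S}$ (near a regular point of $\partial K_t$ the set $\Omega_t$ is locally a half-space, hence locally connected), and $\mathcal{H}^2(\mathcal{S})=0$ by White's dimension bound; you do not make this argument. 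The paper instead sidesteps the issue with a different device: flow each $U_i$ forward by time $\epsilon>0$, so that by (ii) the resulting sets have pairwise disjoint \emph{closures} and perimeter is trivially additive for them; then combine perimeter monotonicity under the flow, $P(\bigcup_i U_i) \geq P(U_\epsilon)$, with lower semicontinuity of perimeter as $\epsilon \to 0$ (using (iii)) to obtain the nontrivial direction $P(\bigcup_i U_i) \geq \sum_i P(U_i)$. This trick also handles the countably infinite case uniformly, which your sketch does not address.
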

\begin{proof}
$(i)$. Let $a= \inf_{x \in O} u(x)$, where $O \subset K_0$ is open in $M$. Suppose $p \in O$ attains the infimum, \textit{i.e.}, $u(p)=a$. Note that $p \notin \partial M$ since $u$ is infinite on $\partial M \cap K_0$. Let $B$ be a small closed ball around $p$, contained in $M \smallsetminus \partial M$, with smooth and strictly mean convex boundary. In particular, the mean curvature flow (and hence the level set flow) beginning at $\partial B$ is smooth for a short time. Moreover, $B \subset O \subset K_a$. However, $p$ belongs to the level set flow of $B$ for all sufficiently small times, but $p$ does not belong to $K_{t}$ for any $t > a$. This violates property (4) above, namely that compact inclusions are preserved under the level set flow for a time $t$.

$(ii)$. The inclusion $\ol \Omega_{t_2} \subset K_{t_2}$ is immediate, since $u$ is continuous. Property (3) above implies $K_{t_2} \subset \interior(K_{t_1})$.  Finally, $\interior(K_{t_1}) \subset \Omega_{t_1}$ by $(i)$, since the infimum of $u$ on $\interior(K_{t_1})$ is $\geq t_1$ and not attained.

$(iii)$. Let $\mathcal{H}^k$ denote Hausdorff $k$-measure with respect to $g$.  Note that by definition of $\sing(\mathcal{K})$, we know that $\nabla u$ exists and is non-zero away from $\mathcal{S}$. So for $t \geq 0$,
\begin{align*}
|\Omega_0|-|\Omega_t| = |\Omega_0 \smallsetminus \Omega_t| 
&= \int_{u^{-1}((0,t])} d\mathcal{H}^3 \\
&= \int_{u^{-1}((0,t])\smallsetminus\mathcal{S} } d\mathcal{H}^3 &&(\text{by  Lemma \ref{lemma:sing_set}})\\
&=  \int_{u^{-1}((0,t])\smallsetminus\mathcal{S}  } |\nabla u|^{-1} \cdot |\nabla u|\, d\mathcal{H}^3 \\
&=\int_0^t   \left[  \int_{u^{-1}(s) \smallsetminus\mathcal{S}  } |\nabla u|^{-1} d\mathcal{H}^2 \right]\, ds \\
&=\int_0^t F(s)\, ds
\end{align*}
where $F(s)$ is defined to be the integrand in brackets. Above, we used the co-area formula, which is valid since $u$ is locally Lipschitz and the integrand is nonnegative. Since $|\Omega_0|-|\Omega_t|$ is finite, $F$ is $L^1$ on any compact interval. Consequently, $|\Omega_0|-|\Omega_t|$
can be expressed as the integral of a locally integrable function of $t$, and both claims readily follow.

$(iv)$. Suppose $U_i = \Omega_{t_i}'$ is a connected component of $\Omega_{t_i}$, and let $U$ be the union of $U_i$ over $i$. For $\epsilon>0$ small, let
$$U_\epsilon = \bigcup_i \left(\Omega_{t_i}' \cap \Omega_{t_i + \epsilon}\right).$$
By uniqueness, the right-hand side is result of running the level set flow of $U$ for time $\epsilon$. By monotonicity of the perimeter (item (5) above) for this flow,
$$P(U) \geq P(U_\epsilon).$$

However, for any $\epsilon>0$, the closures of $\Omega_{t_i}' \cap \Omega_{t_i + \epsilon}$ are pairwise disjoint by $(ii)$, so that
$$P(U_\epsilon) = \sum_i P\left(\Omega_{t_i}' \cap \Omega_{t_i + \epsilon}\right).$$
Thus,
$$\liminf_{\epsilon \to 0} P(U_\epsilon) \geq \sum_i \liminf_{\epsilon \to 0} P\left(\Omega_{t_i}' \cap \Omega_{t_i + \epsilon}\right) \geq \sum_i P(\Omega_{t_i}'),$$
having used $(iii)$ and the lower semicontinuity of perimeter. Putting this all together,
$$P(U) \geq \sum_i P(U_i),$$
and the reverse inequality is immediate from the definition of perimeter.

$(v)$. Consider the level set flow with initial condition $\Omega_{t_1}'$. By uniqueness, for $t>t_1$, the flowed sets are precisely $\Omega_{t_1}' \cap \Omega_t$. By monotonicity of the perimeter under the level set flow,
$$P(\Omega_{t_1}') \geq P( \Omega_{t_1}' \cap \Omega_{t_2}).$$
The right-hand side is the perimeter of the union of \emph{all} components of $\Omega_{t_2}$ that are contained in $\Omega_{t_1}'$ (of which there are countably many, since the sum of their volumes is finite). By $(iv)$, this equals the sum of the perimeters of all such components, which is at least the sum of the perimeters of the components of $\Omega_{t_2}'$, which again by $(iv)$ is $P(\Omega_{t_2}')$.
\end{proof}

\subsection{Construction of the modified flow}
At this point we are ready to define the modified arrival time function. Recall that under Assumption \ref{assumption1}, $m \geq 0$ is the ADM mass of $(M,g)$. Intuitively, the modified flow ``freezes'' a component once its perimeter reaches $36\pi m^2$ or smaller, including situations where such a component instantly appears after a singular time. 

\begin{definition}
\label{def_u_hat}
Define $U_\infty \subset K_0$ to be the set of all $x$ such that there exists $t_0 \geq 0$ such that $x$ belongs to 
a component of $\Omega_{t_0}$ whose perimeter is $<36\pi m^2$. Then for each $x \in K_0 \subset M$, the \emph{modified arrival time} function $\hat u$ is defined to be
\[ 
\hat{u}(x):= \left\{
\begin{array}{ll}
+\infty & \text{for }x\in U_\infty \\
u(x) & \text{for }x\notin U_\infty. 
\end{array}
\right.\]
\end{definition}

Note that if $|\partial K_0| < 36\pi m^2$ (or, more generally, if all  components of $K_0$ have perimeter $<36\pi m^2$), then $\hat u$ is trivially equal to $+\infty$ on $K_0$. Also, if $m=0$, then $\hat u= u$.

It is clear that $U_\infty$ is open in $M$ because it is a union of components of open sets. We proceed to show some basic properties of the set~$U_\infty$.
\begin{prop} 
\label{prop_U_infty}
Let $U_\infty$ be as in Definition \ref{def_u_hat}.
\begin{enumerate}
\item[(A)] Each component $U'$ of $U_\infty$ equals a component of $\Omega_{t_0}$, where $t_0 = \inf_{x \in U'} u(x)$ is called the \emph{freeze time} of $U'$. 
\item[(B)] Every component of $U_\infty$ has  perimeter $\leq 36\pi m^2$.
\item[(C)] The set of all  freeze times is countable.
\item[(D)] $P(U_\infty)$ equals the sum of the perimeters of its components and is finite.
\end{enumerate}
\end{prop}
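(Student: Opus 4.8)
The plan is to prove (A) first; parts (B), (C), (D) then follow quickly. Throughout I use that $U_\infty$ is open in $M\smallsetminus\partial M$ --- being a union of connected components of the open sets $\Omega_t$, all of which are disjoint from $\partial M$ --- so its connected components are open. Two elementary facts do most of the work: the nesting of the $\Omega_t$ (Proposition \ref{prop_u_properties}(ii)) and the tautology $\Omega_{s}=\bigcup_{t>s}\Omega_t$ (immediate from $\Omega_s=\{u>s\}$), used together with the perimeter monotonicity of Proposition \ref{prop_u_properties}(v), the finite additivity of Proposition \ref{prop_u_properties}(iv), and the $L^1$-continuity of $t\mapsto\Omega_t$ from Proposition \ref{prop_u_properties}(iii).

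The key preliminary, valid at any time $s\ge 0$: if $V$ is a component of $\Omega_s$ and $p\in V$, then $V=\bigcup_{t>s}V^p_t$, where $V^p_t$ denotes the component of $\Omega_t$ containing $p$. Indeed the right-hand side $W$ is a union of connected sets increasing as $t\downarrow s$, hence connected, and it is contained in $V$; and $V\smallsetminus W$ is open, since for $q\in V\smallsetminus W$ and any $t\in(s,u(q))$ one checks, using the nesting of the $\Omega_t$ to compare $V^q_t$ with the sets $V^p_{t'}$, that $V^q_t$ is disjoint from $W$, so $V^q_t\subset V\smallsetminus W$. As $V$ is connected and $W\ne\emptyset$, this forces $W=V$.

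For (A), fix a component $U'$ of $U_\infty$ and set $t_0=\inf_{U'}u$. Since $U'$ is open, Proposition \ref{prop_u_properties}(i) says this infimum is not attained, so $u>t_0$ on $U'$, i.e. $U'\subset\Omega_{t_0}$; as $U'$ is connected it lies in a single component $V$ of $\Omega_{t_0}$. It remains to show $V=U'$, and since $V$ is connected, contains $U'$, and $U'$ is a full component of $U_\infty$, it suffices to show $V\subset U_\infty$. Now $U'$ is the union of the family $\mathcal W$ of all components $W$ of the various $\Omega_s$ with $P(W)<36\pi m^2$ and $W\subset U'$ (every point of $U'$ lies in such a $W$ by definition of $U_\infty$, and every such $W$ lies in $U'$ by connectedness of $W$ and of $U'$). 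Any two members of $\mathcal W$ are nested or disjoint, all lie in $V$, and $\inf\{s: W\text{ a component of }\Omega_s,\ W\in\mathcal W\}=t_0$. Combining this with the preliminary --- which prevents $\Omega_t\cap V$ from ``stabilizing'' as $t\downarrow t_0$ to a disconnected set, since any piece of $V$ omitted by $\bigcup\mathcal W$ would be an open set disjoint from an increasing family of connected sets with union $V$ --- together with the monotonicity (v) and continuity (iii), one shows $\bigcup\mathcal W=V$, hence $V=U'\subset U_\infty$ and $U'=V$ is the component of $\Omega_{t_0}$ claimed. \textbf{This last step is the main obstacle}: the careful bookkeeping that the small-perimeter pieces inside $U'$ genuinely exhaust $V$ --- equivalently, that at its freeze time $V$ cannot split off a sub-region that never acquires perimeter $<36\pi m^2$. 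A split into two or more branches at $t_0$ is excluded because the realizations $\bigcup_{t>t_0}$ of the resulting branches are disjoint open sets covering the connected set $V$; the non-splitting case yields $V\subset U_\infty$ directly.

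Part (B): by the construction in (A), $V=U'$ is exhibited as an increasing union of connected sets $W_n\uparrow V$ with $P(W_n)<36\pi m^2$; since $W_n\to V$ in $L^1$, lower semicontinuity of perimeter gives $P(U')=P(V)\le\liminf_n P(W_n)\le 36\pi m^2$. Part (C): $U_\infty$ is an open subset of a second-countable manifold, hence has at most countably many connected components, so the set of freeze times --- a subset of $\{\inf_{U'}u: U'\text{ a component of }U_\infty\}$ --- is countable. Part (D): by (A) every component of $U_\infty$ is a component of some $\Omega_s$, so the countable family of all components of $U_\infty$ is of the form treated in Proposition \ref{prop_u_properties}(iv); applying (iv) gives $P(U_\infty)=\sum_{U'}P(U')$. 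Finiteness follows from a uniform perimeter bound: since $U_\infty\subset\Omega_0$, enlarging $\Omega_t$ to $\Omega_t\cup U_\infty$ and letting $t\to\infty$ --- using monotonicity of perimeter under the (modified) flow and lower semicontinuity --- bounds $P(U_\infty)$ by $P(\Omega_0)$ (up to the finite area $|\partial M|_g$), so $\sum_{U'}P(U')<\infty$.
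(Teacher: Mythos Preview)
Your proof has a genuine gap in part (A), precisely at the place you flag as ``the main obstacle.'' Your preliminary --- that $V=\bigcup_{t>t_0}V^{p}_{t}$ for any fixed $p\in V$ --- is correct, but it does not show that $V\cap\Omega_{t_0+\epsilon}$ is \emph{connected} for small $\epsilon$, and this is what is actually needed. Your small-perimeter witnesses $W\in\mathcal W$ live at times $s_n\downarrow t_0$ but pass through \emph{varying} basepoints $p_n$; without knowing that $V\cap\Omega_{s_n}$ is connected, there is no reason $W_n=V^{p_n}_{s_n}$ coincides with all of $V\cap\Omega_{s_n}$, and hence no reason $\bigcup_n W_n$ exhausts $V$. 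Your heuristic about ``branches at $t_0$'' is exactly the missing statement, not a proof of it: the preliminary only says that any two points of $V$ eventually lie in the same component of $\Omega_t$, but the time at which this happens depends on the pair of points. The paper closes this gap by first establishing the connectedness claim directly (the $\Omega_{t_0+\epsilon}'$ form a strictly nested increasing family with union the connected set $\Omega_{t_0}'$), and then choosing $p_\epsilon\in U'$ with $u(p_\epsilon)\le t_0+\epsilon$: the small-perimeter component $\Omega_{t(\epsilon)}''$ through $p_\epsilon$ is then forced, by connectedness, to equal $\Omega_{t_0}'\cap\Omega_{t(\epsilon)}$, which contains $\Omega_{t_0+\epsilon}'$; letting $\epsilon\to 0$ exhausts $V$.

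There is also a circularity in your finiteness argument for (D). You invoke perimeter monotonicity for the modified flow $\hat\Omega_t=\Omega_t\cup U_\infty$, but that fact is only recorded \emph{after} this proposition, and even making sense of $P(\hat\Omega_t)$ presupposes $P(U_\infty)<\infty$, which is what you are trying to prove. The paper instead bounds $\sum_k P(U'_k)\le P(\Omega_0)$ for any \emph{finite} collection of components of $U_\infty$, by ordering their freeze times $t_1<\cdots<t_N$ and applying Proposition~\ref{prop_u_properties}(v) inductively; finiteness of the full sum then follows by taking the supremum over finite collections. Parts (B), (C), and the first half of (D) are fine as you have them, conditional on (A).
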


An illustration is given in Figure \ref{fig_flow}.
\begin{figure}[ht]
\begin{center}
\includegraphics[scale=0.60]{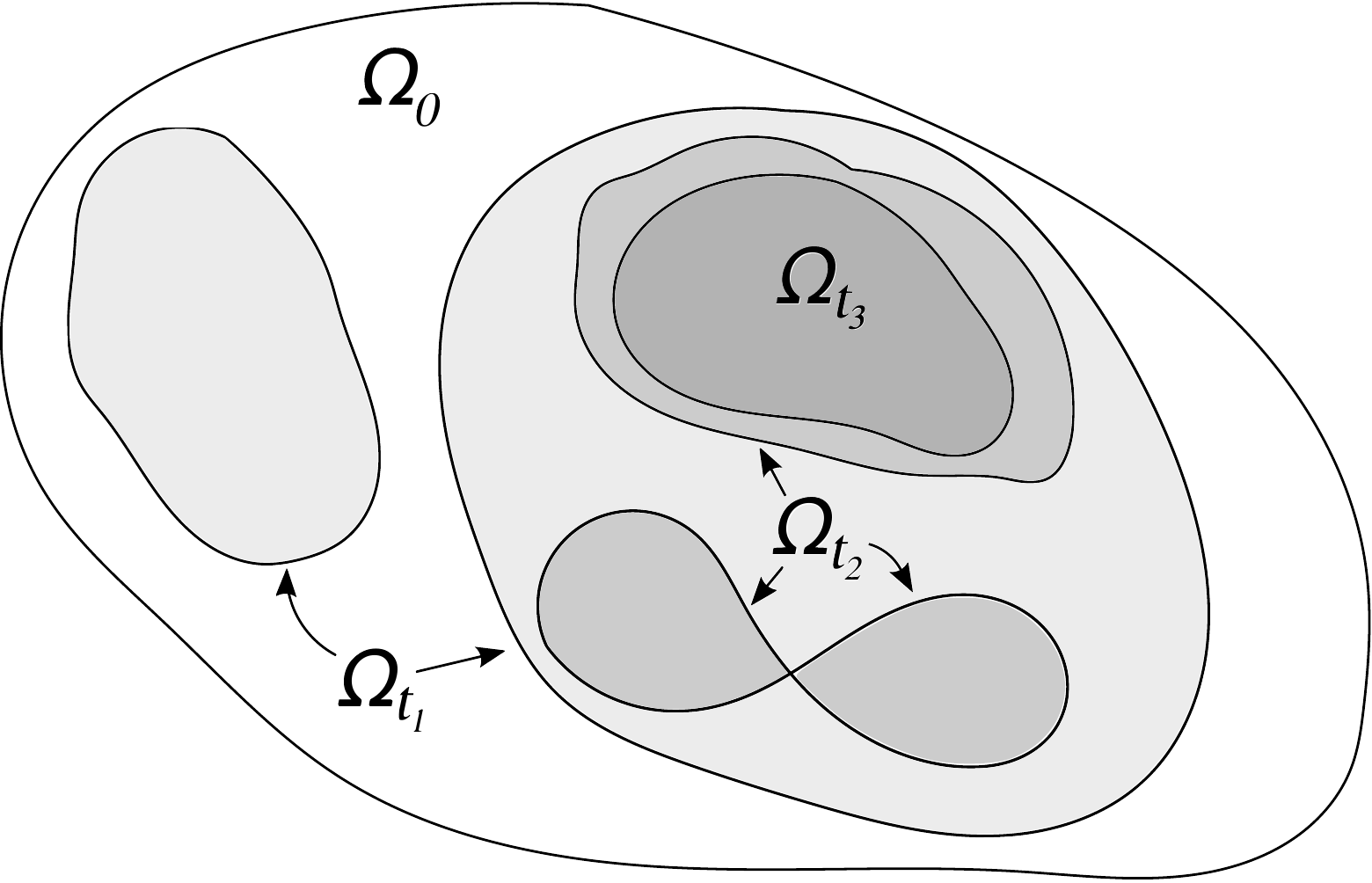}
\caption{\small A schematic of a possible state of the flow at times $0< t_1 < t_2 < t_3$. Time in the flow is correlated with the level of shading. In this example, suppose that the left component of $\Omega_{t_1}$  has perimeter $36\pi m^2$,  the ``dumbbell'' has perimeter $>36\pi m^2$ but the individual halves have perimeter $\le 36\pi m^2$, and that $\Omega_{t_3}$ has 
has perimeter $36\pi m^2$. In this scenario, $t_1$, $t_2$, and $t_3$ are freeze times, and $U_\infty$ is the union of the left component of $\Omega_{t_1}$, the two dumbbell components of $\Omega_{t_2}$, and $\Omega_{t_3}$. \label{fig_flow}}
\end{center}
\end{figure}

\begin{proof}
We begin with some notation that will be used a number of times in the proof. Fix any $t_0 \geq 0$ and a component $\Omega_{t_0}'$ of $\Omega_{t_0}$. For $\epsilon>0$, let $\Omega_{t_0+\epsilon}' = \Omega_{t_0}' \cap \Omega_{t_0+\epsilon}$, which is just the level set flow of $\Omega_{t_0}'$ for time $\epsilon$. Note that
\begin{equation}
\label{eqn:omega_epsilon}
\Omega_{t_0}' = \bigcup_{\epsilon > 0} \Omega_{t_0 + \epsilon}'.
\end{equation}

Claim: For all $\epsilon>0$ sufficiently small, $\Omega_{t_0+\epsilon}'$ is connected. 

Proof of the claim:  By property $(ii)$ of Proposition \ref{prop_u_properties}, (\ref{eqn:omega_epsilon}) is an increasing union as $\epsilon \searrow 0$. Now,
the claim follows from general topology and the fact that $\Omega_{t_0}'$ is connected.

$(A)$. Fix a component $U'$ of $U_\infty$, and let $t_0 = \displaystyle\inf_{x \in U'} u(x)$ be its freeze time. 
So if $x \in U'$, then $u(x) > t_0$ by $(i)$ of Proposition \ref{prop_u_properties} (since $U'$ is open), and thus $x \in \Omega_{t_0}$. Since $U'$ is connected, $U' \subset \Omega_{t_0}$ is contained in a unique component $\Omega_{t_0}'$ of $\Omega_{t_0}$, that is, $U' \subset \Omega_{t_0}'$. We proceed to show that equality holds.

First, if $U' \subset \Omega_{t_0 + \epsilon}'$ for any $\epsilon > 0$, then $\displaystyle\inf_{x \in U'} u(x) \geq t_0+\epsilon> t_0$, a contradiction. Thus, we may choose $\epsilon_0>0$ sufficiently small so that for all $\epsilon \in (0,\epsilon_0]$, $\Omega'_{t_0+\epsilon}$ is connected (by the claim) and does not contain $U'$. Please see Figure \ref{fig_omega_t_0} for an illustration.

\begin{figure}[ht]
\begin{center}
\includegraphics[scale=0.65]{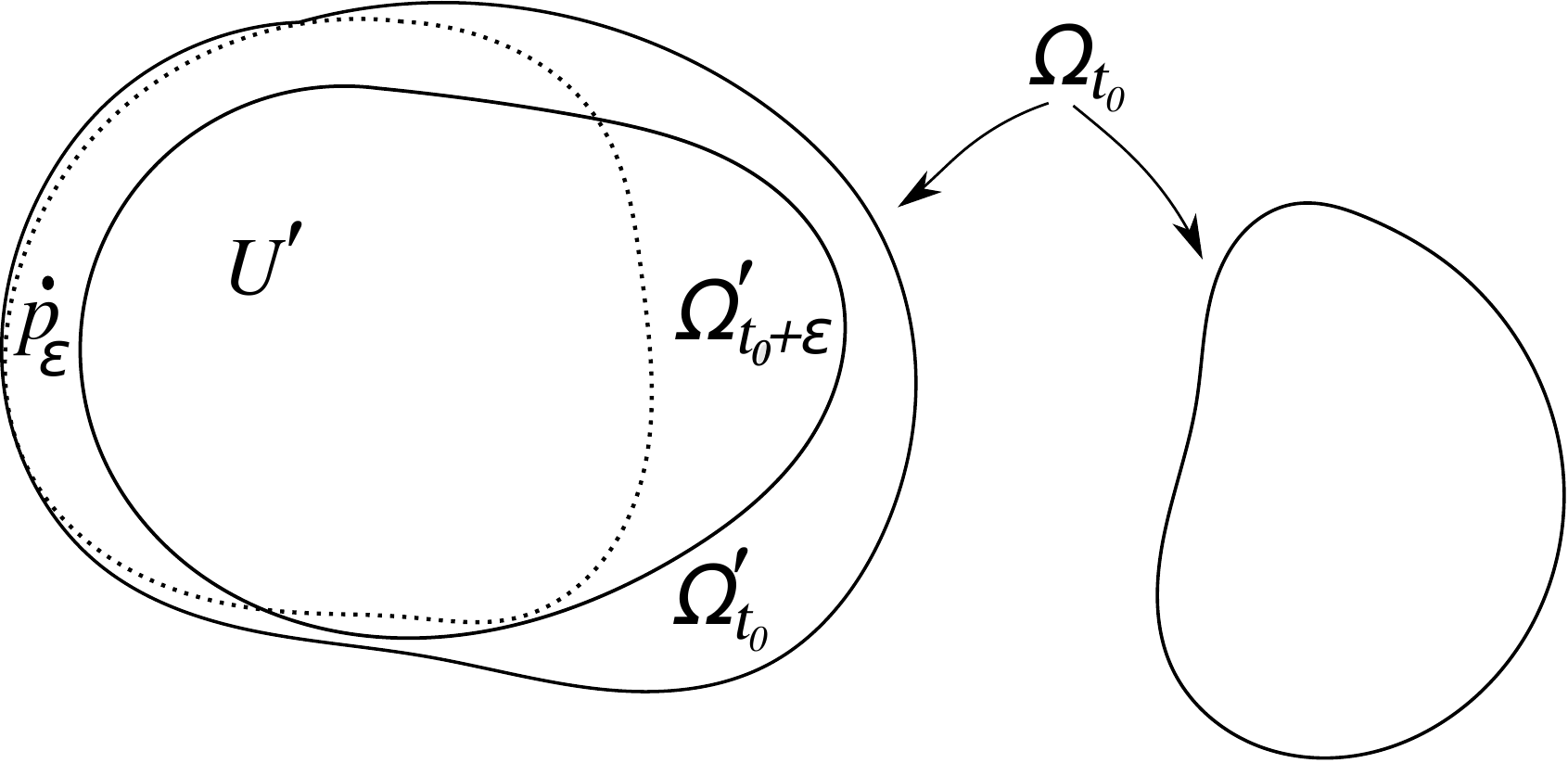}
\caption{\small An illustration of the sets used in the proof of (A) of Proposition \ref{prop_U_infty}. The region with the dotted outline is $U'$. \label{fig_omega_t_0}}
\end{center}
\end{figure}

For each $\epsilon \in (0, \epsilon_0]$, there exists a point $p_\epsilon \in U' \smallsetminus \Omega_{t_0+\epsilon}'$. 
Since $p_\epsilon \in U' \subset U_\infty$, there exists some $t(\epsilon)\geq 0$ and a component $\Omega_{t(\epsilon)}''$ of $\Omega_{t(\epsilon)}$ that contains $p_\epsilon$ and has perimeter $< 36\pi m^2$ (by definition of $U_\infty$).  Note that $t(\epsilon) < u(p_\epsilon) \leq t_0 + \epsilon$.

If $t(\epsilon) < t_0$ for any of the $\epsilon$, then $\Omega_{t(\epsilon)}''$  contains $\Omega_{t_0}'$, since the latter is connected and  both include $p_\epsilon$. By $(v)$ of Proposition \ref{prop_u_properties}, $P( \Omega_{t_0}') \leq P(\Omega_{t(\epsilon)}'') < 36\pi m^2$, which implies $\Omega_{t_0}' \subset U'$, and we are done. 

Thus, we may assume $t(\epsilon) \in [t_0, t_0 + \epsilon_0]$ for every value of $\epsilon \in (0,\epsilon_0]$. Since $p_\epsilon \in \Omega_{t_0}' \cap \Omega_{t(\epsilon)}=\Omega_{t(\epsilon)}'$, we must have $\Omega_{t(\epsilon)}' = \Omega_{t(\epsilon)}''$ by connectedness. It is clear that
$$\Omega_{t_0 + \epsilon}' \subset \Omega_{t(\epsilon)}',$$
since $t_0 + \epsilon \geq t(\epsilon)$.
Also, since $\Omega_{t(\epsilon)}'$ has perimeter $< 36\pi m^2$,
$$\Omega_{t(\epsilon)}' \subset U'.$$
Thus, $\Omega_{t_0 + \epsilon}' \subset U'$ for every $\epsilon \in (0,\epsilon_0]$. By (\ref{eqn:omega_epsilon}), $\Omega_{t_0}'$, is a subset of $U'$.

$(B)$. Let $U'$ be a component of $U_\infty$, so  $U'= \Omega_{t_0}'$ for some $t_0\geq 0$ by $(A)$. The argument in (A) shows that either $P(U') < 36\pi m^2$ or else $U'$ is the limit of sets $\Omega_{t(\epsilon)}'$ (as $\epsilon \searrow 0$) of perimeter $<36\pi m^2$. By lower semicontinuity of perimeter, $P(U') \leq 36\pi m^2$.

$(C)$. From the definition, the cardinality of the set of freeze times is at most the cardinality of the set of components of $U_\infty$, which is at most countably infinite (since the sum of the volumes of the components of $U_\infty$ is finite).

$(D)$. That $P(U_\infty)$ equals the sum of the perimeters of its components follows from $(A),(C)$, and  $(iv)$ of Proposition \ref{prop_u_properties}.
Now we only need to show that $P(U_\infty)$ is finite.

We first prove a weaker statement, restricting to a single freeze time. Let $t_1$ be a freeze time, and let $\Omega_{t_1}^{(1)}, \Omega_{t_1}^{(2)}, \ldots, \Omega_{t_1}^{(n_1)}$ denote a finite collection of distinct components of $U_\infty$ that have freeze time $t_1$. Then, using $(v)$ of Proposition \ref{prop_u_properties} and the first part of $(D)$,

\begin{align*}
P( \Omega_0) & \geq P \left(\Omega_{t_1}^{(1)} \cup \Omega_{t_1}^{(2)} \cup \ldots \cup \Omega_{t_1}^{(n_1)}\right)\\
&=P( \Omega_{t_1}^{(1)}) + P( \Omega_{t_1}^{(2)}) + \ldots + P( \Omega_{t_1}^{(n_1)}).
\end{align*}
Since this holds for all finite collections of components with freeze time $t_1$, the proof of the weaker statement is complete: the sum of the perimeters of all components of $U_\infty$ corresponding to a single freeze time is finite.

In general, let $t_1 < t_2 < \ldots < t_N$ denote any finite collection of freeze times. For each $k=1,2,\ldots,N$, let $\Omega_{t_k}^{(1)}, \Omega_{t_k}^{(2)}, \ldots, \Omega_{t_k}^{(n_k)}$ denote finite collections of distinct components of $U_\infty$ that have freeze time $t_k$. This general case follows from induction, making repeated use of Proposition \ref{prop_u_properties}(v).
\end{proof}

There is one further topological result that we need.
\begin{lemma}
\label{lemma_connected_boundary}
Given Assumption \ref{assumption1}, let $t_0\geq 0$ be a regular time of the level set flow. Then every component of $\Omega_{t_0}$ has connected boundary.
\end{lemma}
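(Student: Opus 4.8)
The plan is to argue by contradiction: suppose some component $\Omega'$ of $\Omega_{t_0}$ has disconnected boundary, and produce a compact minimal surface in the interior of $M$, contradicting Assumption~\ref{assumption1}. First I would record that $\Sigma_{t_0}=\partial\Omega_{t_0}$ is weakly mean-convex at the regular time $t_0$: since $\Sigma_{t_0}\times\{t_0\}$ consists of regular points of $\mathcal K$, White's regularity theorem together with compactness of $\Sigma_{t_0}$ shows the level set flow agrees with a smooth classical mean curvature flow for $t$ in an open interval about $t_0$; mean-convexity ($H\geq 0$ with respect to the outward normal of $\Omega_t$) is preserved by such a flow and, at $t_0=0$, is hypothesized; hence the mean curvature vector of each boundary component of $\Omega'$ points into $\Omega'$. (The strong maximum principle along the smooth flow, together with the absence of interior compact minimal surfaces, would even give $H>0$, but strictness is not needed.) Because $\Sigma_{t_0}=u^{-1}(t_0)$ avoids the fill-in caps and $\partial M$ (where $u=+\infty$), we have $\partial\Omega'\subset\Sigma_{t_0}\subset\interior M$, so $\overline{\Omega'}$ is a compact $3$-manifold with boundary $\partial\Omega'$; deleting from it the open fill-in region $W$ (that is, any fill-in caps it contains) produces a connected compact $3$-manifold $N:=\overline{\Omega'}\smallsetminus W\subset M$ whose boundary is the disjoint union of $\partial\Omega'$ with finitely many minimal components of $\partial M$.

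Next I would identify a nontrivial homology class on $N$. Write $\partial\Omega'=\Sigma_1\sqcup\cdots\sqcup\Sigma_\ell$ with $\ell\geq 2$, and fix $\Sigma_1$. Working with $\mathbb{Z}/2$ coefficients, the segment $H_3(N,\partial N)\xrightarrow{\partial_*}H_2(\partial N)\xrightarrow{i_*}H_2(N)$ of the long exact sequence of the pair shows that $\ker i_*$ is spanned by $[\partial N]$, the sum over all components of $\partial N$. Since $\partial N$ has at least $\ell\geq 2$ components, $[\Sigma_1]$ is not that sum, so $[\Sigma_1]\neq 0$ in $H_2(N;\mathbb{Z}/2)$.

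Then I would minimize area among $\mathbb{Z}/2$-cycles in $N$ in the class $[\Sigma_1]$. A minimizer $S$ exists by Federer--Fleming compactness (with $\Sigma_1$ a finite-area competitor and $N$ compact), is nonzero since its class is, and by interior regularity in dimension three is a smooth closed embedded minimal surface in $N$. Every component of $\partial N$ is a barrier --- the faces $\partial\Omega'$ are mean-convex toward $N$, the exposed components of $\partial M$ are minimal --- so $S$ stays in $N$, and by the maximum principle any component of $S$ touching $\partial N$ coincides with a minimal component of $\partial N$. If some component of $S$ lies in $\interior N\subset\interior M$, it is a compact minimal surface in the interior of $M$: contradiction. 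Otherwise $S$ is a union of minimal boundary components of $N$ distinct from $\Sigma_1$, whence $\Sigma_1$ together with those components bounds a region $R\subset N$ whose entire boundary lies in $\partial N$; this forces $R$ to be a nonempty proper open-and-closed subset of the connected manifold $N$, which is absurd. Hence $\partial\Omega'$ is connected.

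The step I expect to be the main obstacle is the last one: arranging that the area-minimizer is genuinely forced into $\interior M$ and cannot hide inside a fill-in cap or sit on $\partial M$. This is exactly why one works with $N$ rather than with $\overline{\Omega'}$ or $M^{\mathrm{fill}}$, and why one must combine the mean-convex and minimal-boundary barrier principles with the homological bookkeeping above; by comparison, the preservation of mean-convexity and the existence and interior regularity of the minimizer are standard.
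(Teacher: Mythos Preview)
Your proof is correct and follows essentially the same strategy as the paper: remove the fill-in caps from the closure of (a component of) $\Omega_{t_0}$ to obtain a compact manifold-with-boundary, minimize area in the homology class of one outer boundary component, use mean-convexity of $\partial\Omega_{t_0}$ and minimality of $\partial M$ as barriers, and derive a contradiction from the minimizer being forced into $\partial M$. Your treatment of the final homological step (via the long exact sequence of the pair and the connectedness of $N$) is in fact more explicit than the paper's, which simply asserts that $\tilde S\subset\partial M$ being homologous to $S\subsetneq\partial\Omega_{t_0}'$ is already a contradiction.
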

\begin{proof}
Suppose $t_0$ is a regular time. Thus $\partial \Omega_{t_0}$ is smooth, and we take it be non-empty (or else the result is trivial). 
Let $\Omega_{t_0}'$ be a component of $\Omega_{t_0}$ such that $\partial \Omega_{t_0}'$ is disconnected. 
Let $S$ be any component of $\partial\Omega_{t_0}'$. 

Note that $\partial \Omega_{t_0}$ does not touch $\partial M$, because $u$ is finite on the former and infinite on the latter. Thus,
if we define $X$ to be the closure of $\Omega_{t_0}$ minus the fill-in region, then $X$ is a smooth manifold-with-boundary. It is a standard result in geometric measure theory that there exists a surface $\tilde S$ in $X$ that minimizes area in the homology class of $S$ in $X$. Note that $\tilde S$ is non-empty, or else it would not be homologous to $S$. Also note that $\tilde S$ may not intersect the outer boundary $\partial \Omega_{t_0}'$, by the maximum principle, because the latter is mean-convex. If any component of $\tilde S$ does not intersect $\partial M$, then that component would be an interior minimal surface, a contradiction. Thus every component of $\tilde S$ touches $\partial M$; since $\partial M$ is minimal, we have $\tilde S \subset \partial M$. This contradicts the fact that $\tilde S$ is homologous to $S \subsetneq \partial \Omega_{t_0}'$.
\end{proof}

Now we are ready to define the modified level set flow.
\begin{definition}
Given the modified level set function $\hat u$ from Definition \ref{def_u_hat}, define for $t\geq 0$:
\begin{align*}
\hat \Omega_t &= \{x \in K_0 \; | \; \hat u(x) > t\} = \Omega_t \cup U_\infty,
\end{align*}
which is open.
\end{definition}

\section{Properties of the modified flow}
\label{sec:mod_flow_prop}

In this section we continue with Assumption \ref{assumption1}. Most of the regularity properties of the level set flow carry over immediately to the modified flow, because $u = \hat u$ on $K_0 \smallsetminus U_\infty$. For instance, $\hat \Omega_t$ has non-increasing perimeter in $t$. Also:

\begin{lemma}
\label{lemma:AC}
$|\hat \Omega_t|$ is absolutely continuous as a function of $t \geq 0$.
\end{lemma}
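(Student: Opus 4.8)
The plan is to deduce the absolute continuity of $t \mapsto |\hat\Omega_t|$ from the identity $\hat\Omega_t = \Omega_t \cup U_\infty$ and the fact, already established in Proposition \ref{prop_u_properties}(iii), that $t \mapsto |\Omega_t|$ is absolutely continuous. The key observation is that $U_\infty$ is a \emph{fixed} open set, independent of $t$, so the only $t$-dependence in $\hat\Omega_t$ comes from $\Omega_t$.

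First I would write $|\hat\Omega_t| = |\Omega_t \cup U_\infty| = |\Omega_t| + |U_\infty \smallsetminus \Omega_t| = |\Omega_t| + |U_\infty| - |U_\infty \cap \Omega_t|$, at least formally. To make this rigorous I need $|U_\infty| < \infty$, which follows because $U_\infty \subset K_0$ and $K_0$ has compact closure (hence finite volume). Then the cleanest route is to note that $U_\infty$ itself is a union of components of various $\Omega_{t_i}$ with freeze times $t_i$, and by Proposition \ref{prop_U_infty}(A) each component $U'$ of $U_\infty$ coincides with a component of $\Omega_{t_0(U')}$ where $t_0(U')$ is its freeze time. For $t$ larger than the freeze time of $U'$, the level set flow has already ``left'' $U'$ frozen in the modified flow, so $U' \subset \hat\Omega_t$ for all $t$; in the unmodified flow $U' \cap \Omega_t$ shrinks. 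The point is that $U_\infty \cap \Omega_t = \bigcup_{U'} (U' \cap \Omega_t)$, and on each $U'$, since $U'$ is a component of $\Omega_{t_0(U')}$, we have $U' \cap \Omega_t = \Omega_{t_0(U')}' \cap \Omega_t$ which is exactly the level set flow of $U'$ run for time $t - t_0(U')$ (for $t \geq t_0(U')$, and $U' \cap \Omega_t = U'$ for $t < t_0(U')$, though typically freeze times are where components first appear so this case may be vacuous). So $t \mapsto |U' \cap \Omega_t|$ is absolutely continuous by applying Proposition \ref{prop_u_properties}(iii) to the flow restricted to $U'$.

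The remaining issue is the sum over the (countably many) components $U'$ of $U_\infty$: I need $t \mapsto \sum_{U'} |U' \cap \Omega_t|$ to be absolutely continuous, not merely each term. Here I would use the co-area/derivative estimate from the proof of Proposition \ref{prop_u_properties}(iii): on each $U'$, $-\frac{d}{dt}|U' \cap \Omega_t| = \int_{u^{-1}(t) \cap U'} |\nabla u|^{-1}\, d\mathcal{H}^2 =: F_{U'}(t) \geq 0$, and these are disjoint pieces of the global level set $u^{-1}(t)$, so $\sum_{U'} F_{U'}(t) \leq F(t)$, where $F \in L^1_{\mathrm{loc}}$ is the global integrand. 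Since $|U_\infty| < \infty$, dominated convergence lets me write $|U_\infty \cap \Omega_t| = |U_\infty \cap \Omega_0| - \int_0^t \sum_{U'} F_{U'}(s)\, ds$, an integral of an $L^1_{\mathrm{loc}}$ function, hence absolutely continuous. Combining with the absolute continuity of $|\Omega_t|$ and the constant $|U_\infty|$, I conclude $|\hat\Omega_t|$ is absolutely continuous.

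The main obstacle I anticipate is the bookkeeping with countably many frozen components and making the interchange of sum and integral (or sum and derivative) airtight — specifically verifying that the ``local'' co-area computations on each $U'$ patch together correctly and that the dominating function is globally $L^1$. Everything else is a direct consequence of results already proved in Section \ref{sec:weak_flow}. A shortcut worth checking is whether one can simply say: $U_\infty \smallsetminus \hat\Omega_t = \emptyset$ (since $\hat u = +\infty$ on $U_\infty$), so actually $\hat\Omega_t \supset U_\infty$ for \emph{all} $t$, giving $|\hat\Omega_t| = |U_\infty| + |\Omega_t \smallsetminus U_\infty|$, and $|\Omega_t \smallsetminus U_\infty| = |\Omega_t| - |\Omega_t \cap U_\infty|$ — this is the same decomposition but phrased so that the ``$U_\infty$ part'' of $\hat\Omega_t$ is manifestly constant in $t$, which is the conceptually important point and may streamline the write-up considerably.
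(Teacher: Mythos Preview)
Your approach is correct but more elaborate than needed. The paper bypasses the component decomposition entirely: since $\hat u = +\infty$ on $U_\infty$ and $\hat u = u$ elsewhere, one has $\hat\Omega_0 \smallsetminus \hat\Omega_t = \hat u^{-1}((0,t]) = u^{-1}((0,t]) \smallsetminus U_\infty$, and then the co-area argument from Proposition~\ref{prop_u_properties}(iii) applies verbatim to $u$ on $K_0 \smallsetminus U_\infty$, yielding $|\hat\Omega_0| - |\hat\Omega_t| = \int_0^t F(s)\,ds$ for some locally integrable $F$. This is precisely the ``shortcut'' you identify in your final paragraph, and it avoids the component-by-component bookkeeping and the summability concern altogether. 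Your longer route works too --- the domination $\sum_{U'} F_{U'} \le F$ is valid since the $U'$ are disjoint --- but it duplicates effort: you first invoke absolute continuity of $|\Omega_t|$ (co-area on all of $K_0$) and then separately establish absolute continuity of $|\Omega_t \cap U_\infty|$ (co-area on $U_\infty$), when a single application on $K_0 \smallsetminus U_\infty$ suffices.
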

\begin{proof}
Following the proof of Proposition \ref{prop_u_properties}(iii), 
\begin{align*}
|\hat \Omega_0|-|\hat \Omega_t| = |\hat \Omega_0 \smallsetminus \hat \Omega_t| 
&= \int_{\hat u^{-1}((0,t])} d\mathcal{H}^3 \\
&= \int_{u^{-1}((0,t]) \smallsetminus U_\infty} d\mathcal{H}^3,
\end{align*}
and the rest of the proof follows analogously, applying the co-area formula to $u$ on the compact set $K_0 \smallsetminus U_\infty$.
\end{proof}

The following result addresses the long-time behavior of the modified flow.
\begin{lemma}[Flow freezes in finite time]
\label{lemma_area}
Let $(M,g)$ and $K_0$ be as in Assumption \ref{assumption1}, and let $\hat \Omega_t$ be the modified level set flow. There exists a unique, finite $T \geq 0$ such that  $\hat \Omega_t = U_\infty$ for $t\geq T$, while $\hat \Omega_t \supsetneq U_\infty$ for $t<T$.
\end{lemma}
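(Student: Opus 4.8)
The plan is to show that the ordinary (unmodified) flow $\Omega_t$ has finite extinction/stabilization time, and that this bounds $T$. Define $T = \inf\{t \geq 0 : \hat\Omega_t = U_\infty\}$; since $\hat\Omega_t = \Omega_t \cup U_\infty$ is a nested decreasing family in $t$ with $U_\infty \subset \hat\Omega_t$ always, the set $\{t : \hat\Omega_t = U_\infty\}$ is automatically an interval of the form $[T,\infty)$ or $(T,\infty)$ once we know it is nonempty, and uniqueness is immediate. The substance is therefore (a) nonemptiness — that $\hat\Omega_t = U_\infty$ for all large $t$ — and (b) that the infimum is actually attained, i.e. $\hat\Omega_T = U_\infty$, so the interval is closed.

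For (a), recall from property (6) of the long-term behavior of the level set flow that under Assumption \ref{assumption1} the unmodified flow either vanishes in finite time or $\Sigma_t$ converges as $t \to \infty$ to a union of minimal surfaces; since $M$ has no interior compact minimal surfaces and $\Sigma_t$ cannot cross $\partial M$, the set $\bigcap_{t>0}K_t$ is exactly the closure of the fill-in regions contained in $K_0$. Hence for the unmodified flow the ``essential'' part $\Omega_t \smallsetminus W$ has volume tending to $0$. The point is that the modification only freezes components once their perimeter drops to $36\pi m^2$ or below, and by the isoperimetric inequality (Lemma \ref{sop-lower-bound} applied on $M^{\mathrm{fill}}$, or on $\Omega_0$) a component of perimeter $\le 36\pi m^2$ has volume bounded by a fixed constant $(36\pi m^2)^{3/2}/c$. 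More importantly, I would argue that the complement $\hat\Omega_t \smallsetminus U_\infty = \Omega_t \smallsetminus U_\infty$ must become empty in finite time: any component of $\Omega_t$ not contained in $U_\infty$ has perimeter $> 36\pi m^2$ by definition of $U_\infty$; but there are only finitely many such components at any time (their volumes sum to at most $|\Omega_0|$ and each has volume bounded below by $(36\pi m^2)^{3/2}/c$ via the isoperimetric inequality), and each is an allowable-type region evolving by mean curvature flow, so each either vanishes in finite time or shrinks toward a minimal surface — which, by the absence of interior minimal surfaces and the barrier $\partial M$, forces it into the fill-in region and hence (being a component that is ``contained in or disjoint from'' each fill-in piece) to coincide with a fill-in component, contradicting perimeter $> 36\pi m^2 \geq 0$ unless it simply disappears. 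Once every such component has either disappeared or been swallowed into $U_\infty$ (which happens when its perimeter first reaches $36\pi m^2$, a time bounded by the perimeter-decay rate), we have $\Omega_t \subset U_\infty$, i.e. $\hat\Omega_t = U_\infty$. Quantitatively: the number of non-frozen components is finite, each has perimeter decreasing (property (5)), and standard comparison (enclose by a large sphere flowing by MCF) gives a uniform upper bound on the time for a region of bounded initial perimeter to reach perimeter $36\pi m^2$ or vanish; taking the max over the finitely many components yields a finite $T$.

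For (b), attainment at $t = T$: by Proposition \ref{prop_u_properties}(ii)/(iii) and Lemma \ref{lemma:AC}, $t \mapsto \hat\Omega_t$ is continuous in $L^1$ and the family is nested, so $\hat\Omega_T = \bigcap_{t<T}\hat\Omega_t$ up to measure zero; but in fact from $\ol{\Omega_{t_2}} \subset \Omega_{t_1}$ for $t_2 > t_1$ one gets $\Omega_T = \bigcap_{t<T}\Omega_t$ as sets, hence $\hat\Omega_T = \bigcap_{t<T}(\Omega_t \cup U_\infty) = U_\infty$ since each $\Omega_t \subset U_\infty$ fails only for $t < T$... more carefully, if $\hat\Omega_T \supsetneq U_\infty$ there is a point $x$ with $\hat u(x) > T$, i.e. $x \notin U_\infty$ and $u(x) > T$, so $x \in \Omega_t$ for all $t$ slightly above $T$ as well, contradicting that $\hat\Omega_t = U_\infty$ for $t > T$. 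So the infimum is attained and the stated dichotomy $\hat\Omega_t = U_\infty$ for $t \ge T$, $\hat\Omega_t \supsetneq U_\infty$ for $t < T$ holds with $T$ finite.

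\textbf{Main obstacle.} The delicate point is making the finiteness of $T$ genuinely uniform/quantitative: one must rule out an infinite sequence of successively smaller non-frozen components appearing near a cascade of singular times and taking longer and longer to freeze, i.e. $T$ being an infinite supremum of finite freeze times. This is handled by the two facts that (i) the total perimeter is bounded by $P(\Omega_0)$ and is split additively among components (Proposition \ref{prop_u_properties}(iv)), so at any fixed time only finitely many components exceed perimeter $36\pi m^2$, and (ii) mean-curvature-flow comparison with a shrinking round sphere gives a time bound for reaching a prescribed perimeter that depends only on the initial perimeter — combined, these cap the freeze times uniformly. I would present (i)–(ii) as the heart of the argument and treat the nesting/attainment in part (b) as routine.
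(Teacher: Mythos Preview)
Your structural setup is fine: defining $T$ as the infimum, reducing to (a) nonemptiness and (b) attainment, and your argument for (b) is essentially correct (the paper does it in one line via $\hat\Omega_T = \bigcup_{t>T}\hat\Omega_t$, but your contradiction argument works too).

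The genuine gap is in (a). You invoke property~(6) but then abandon it for a sphere-comparison argument, and in doing so you miss the key ingredient: the Riemannian Penrose inequality (Theorem~\ref{thm:RPI}). When $\partial M \cap K_0 \neq \emptyset$, the unmodified flow does \emph{not} vanish in finite time; it converges to $\partial M \cap K_0$ as $t\to\infty$. So the perimeter of $\Omega_t$ does not tend to zero but rather to $|\partial M \cap K_0|$, and there is no a priori reason this should be below $36\pi m^2$ --- that is precisely what Penrose gives ($|\partial M| \le 16\pi m^2 < 36\pi m^2$). Your sentence ``forces it into the fill-in region \ldots\ contradicting perimeter $> 36\pi m^2$'' smuggles in this bound without justification. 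The paper's argument is: by White's convergence result, for large $t$ each component of $\partial K_t$ has area close to that of a component of $\partial M$, hence $<36\pi m^2$ by Penrose; combined with Lemma~\ref{lemma_connected_boundary} (each component of $\Omega_t$ has connected boundary), every component of $\Omega_t$ has perimeter $<36\pi m^2$, so $\Omega_t \subset U_\infty$.

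Your ``main obstacle'' --- a cascade of successively appearing non-frozen components --- is a red herring once you argue as above, since you get freezing of \emph{all} components simultaneously at a single large time. Likewise, the sphere-comparison argument is both unnecessary and problematic: comparison gives containment, not a perimeter bound, and in any case cannot force the perimeter below $|\partial M|$ when the flow has a nontrivial limit. Drop the isoperimetric/comparison machinery and instead use Penrose plus the smooth convergence in property~(6).
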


Note that if at least one component of $K_0$ has perimeter $>36\pi m^2$, then $T>0$.
\begin{proof}
We claim that for $t$ sufficiently large, $\Omega_t$ has perimeter $< 36\pi m^2$. If $\partial M \cap K_0 = \emptyset$, then  $\Omega_t$ is empty for large $t$ (by property (6) of the level set flow). Otherwise, suppose $\partial M \cap K_0 \neq \emptyset$. The Riemannian Penrose inequality (Theorem \ref{thm:RPI}) guarantees that every component of $\partial M$ has area $\leq 16 \pi m^2 < 36\pi m^2$ (since $\partial M$ is an outermost minimal surface and $(M,g)$ has nonnegative scalar curvature).  From property (6) of the level set flow (Theorem 11.1 of \cite{White:2000}), we know that $\partial K_t$ smoothly converges to 
 $\partial M \cap K_0$ as $t\to\infty$. In particular, for sufficiently large $t$, each component of $\partial K_t$ has area $<36\pi m^2$.  By Lemma \ref{lemma_connected_boundary}, every component of $\Omega_t$ has perimeter $<36\pi m^2$, for sufficiently large $t$. 
 
By the claim, for large enough $t$, $\Omega_t\subset U_\infty$ and thus $\hat \Omega_t = U_\infty$. Finally, taking $T$ to be the infimum of all of the $t$ satisfying $\hat \Omega_t = U_\infty$, we see that $\hat\Omega_T$ is the union of those $\hat\Omega_t$, and thus $\hat \Omega_T = U_\infty$ as well. The result follows.
\end{proof}

Define the set of singular times for the modified flow to be the set of singular times for the original flow in $[0,T]$, union the set of freeze times. Note that this set of singular times has measure zero (by property (1) of the level set flow and Proposition \ref{prop_U_infty}(C)). We define the regular times for the modified flow to be the complement in $[0,T]$ of the set of singular times.
$\;$\\

\noindent \textbf{Notation.} From this point on, we drop the hat notation and use $\Omega_t$ to refer to the modified level set flow. 

\begin{prop}[Huisken's relative volume monotonicity for the modified flow]
\label{thm_monotonicity_general}
Let $(M,g)$ and $m$ be as described in Assumption \ref{assumption1}, let  $\Omega_t$ be the modified level set flow, and let $T$ be as in Lemma \ref{lemma_area}.
Then $\phi_m(|\partial^* \Omega_t|) - |\Omega_t|$ is non-increasing over $[0,T]$.
\end{prop}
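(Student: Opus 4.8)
The plan is to reduce the monotonicity of $\phi_m(|\partial^*\Omega_t|) - |\Omega_t|$ along the modified flow to two ingredients: smooth (multi-component) monotonicity on intervals of regular times, and a ``good sign'' jump condition at singular and freeze times. First I would show that $\phi_m(|\partial^*\Omega_t|) - |\Omega_t|$ is continuous in $t$: $|\Omega_t|$ is absolutely continuous by Lemma~\ref{lemma:AC}, and $|\partial^*\Omega_t|$ is upper semicontinuous from the right and lower semicontinuous generally (from monotonicity of perimeter and lower semicontinuity), and I would argue it is in fact continuous except possibly at the countably many singular/freeze times, using Proposition~\ref{prop_u_properties}(iii) and (iv); since $\phi_m$ is continuous and monotone on $[36\pi m^2,\infty)$, this controls the composite. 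Since the set of singular times has measure zero, it then suffices to prove: (a) on each open interval of regular times, $\frac{d}{dt}\big(\phi_m(|\partial^*\Omega_t|) - |\Omega_t|\big)\le 0$, and (b) at each singular or freeze time $t_0$, $\limsup_{t\to t_0^-}\big(\phi_m(|\partial^*\Omega_t|)-|\Omega_t|\big) \ge \phi_m(|\partial^*\Omega_{t_0}|)-|\Omega_{t_0}|$, i.e. the quantity does not jump up.

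For step (a), on a regular interval the flow agrees with smooth MCF and $\Omega_t$ decomposes into finitely many components $\Omega_t^{(k)}$ (finitely many because each has perimeter $> 36\pi m^2$ by construction of the modified flow, and total perimeter is bounded), plus the frozen set $U_\infty$ which contributes a constant to both $|\Omega_t|$ (well, $U_\infty \subset \Omega_t$ always, so it contributes a fixed volume) and whose boundary perimeter is constant. By Lemma~\ref{lemma_connected_boundary} each component $\Omega_t^{(k)}$ has connected boundary $\Sigma_t^{(k)}$, so Theorem~\ref{thm:haw-adm} gives $\mh(\Sigma_t^{(k)})\le \madm(M,g)=m$, and each has area $> 36\pi m^2 > 16\pi m^2$; hence Proposition~\ref{thm:huisken-monotonicity} applies componentwise to give $\frac{d}{dt}\big(\phi_m(|\Sigma_t^{(k)}|) - |\Omega_t^{(k)}|\big)\le 0$. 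Then I would invoke superadditivity: because each $|\Sigma_t^{(k)}|\ge 36\pi m^2$ and $\phi_m$ is convex there (Lemma~\ref{lemma_concave_up}), and writing $|\partial^*\Omega_t| = \sum_k |\Sigma_t^{(k)}| + P(U_\infty)$ via Proposition~\ref{prop_u_properties}(iv), Lemma~\ref{lemma_phi_multi_monotonicity} (with $\gamma = P(U_\infty)$, comparing the state at $t$ to a slightly later time) shows that summing the componentwise monotonicities, together with the good sign coming from convexity when bundling the areas under a single $\phi_m$, yields $\frac{d}{dt}\big(\phi_m(\sum_k|\Sigma_t^{(k)}|+P(U_\infty)) - \sum_k|\Omega_t^{(k)}| - |U_\infty|\big)\le 0$. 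The frozen piece $U_\infty$ has constant volume and constant perimeter contribution, so this is exactly $\frac{d}{dt}\big(\phi_m(|\partial^*\Omega_t|)-|\Omega_t|\big)\le 0$.

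For step (b), consider a singular or freeze time $t_0$ and compare with $t < t_0$, $t\to t_0^-$. Volumes are continuous, so $|\Omega_t|\to |\Omega_{t_0}|$. For perimeters: $\Omega_{t_0}$ (after freezing) is a union of components, and each such component either already appeared before $t_0$ (where it was the flowed image $\Omega_{t_0}^{(k)} = \Omega_t^{(k)}\cap\Omega_{t_0}$ up to the non-fattening null set) or is a newly-appearing component of perimeter $\le 36\pi m^2$ that gets frozen into $U_\infty$. In all cases, by Proposition~\ref{prop_u_properties}(v), the perimeter of the union of components of $\Omega_{t_0}$ that survive into $U_\infty$ or persist is bounded above by the perimeter of the corresponding components at time $t$; meanwhile, by Lemma~\ref{lemma_phi_multi_monotonicity} applied with the split between ``frozen at $t_0$'' and ``still-flowing at $t_0$'' pieces (all of which have area $\ge 36\pi m^2$ at time $t$), one gets $\phi_m(|\partial^*\Omega_{t}|) - |\Omega_t| \ge \phi_m(|\partial^*\Omega_{t_0}|) - |\Omega_{t_0}| - o(1)$. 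Combining continuity on regular intervals, the no-upward-jump at the countably many exceptional times, and absolute continuity of the volume term gives that $\phi_m(|\partial^*\Omega_t|) - |\Omega_t|$ is non-increasing on $[0,T]$.

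The main obstacle I anticipate is step (b), specifically bookkeeping the components at a singular/freeze time $t_0$: one must carefully match each component of $\Omega_{t_0}$ (and of $U_\infty$ frozen exactly at $t_0$) to the component(s) of $\Omega_t$ for $t<t_0$ that flow onto it, verify that the relevant areas are all $\ge 36\pi m^2$ so that convexity of $\phi_m$ and Lemma~\ref{lemma_phi_multi_monotonicity} can be applied, and handle the subtlety that at $t_0$ a large component can split into several pieces, some of which drop below $36\pi m^2$ and get frozen. Getting the inequality to point the right way in that splitting scenario is exactly where the convexity of $\phi_m$ for $A\ge 36\pi m^2$ is doing the essential work, and writing this cleanly — rather than the differential inequality at regular times, which is routine given Proposition~\ref{thm:huisken-monotonicity} — will be the technical heart of the proof.
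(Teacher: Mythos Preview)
Your step~(a) matches the paper's argument on regular intervals: apply Proposition~\ref{thm:huisken-monotonicity} to each unfrozen component (using Lemma~\ref{lemma_connected_boundary} and Theorem~\ref{thm:haw-adm} to secure the Hawking-mass hypothesis), then use Lemma~\ref{lemma_phi_multi_monotonicity} to combine them under a single $\phi_m$. One small correction: the constant $\gamma$ in that lemma should be the perimeter $A_0$ of the components already frozen by the current time, not all of $P(U_\infty)$; components of $U_\infty$ with later freeze times are still evolving and are counted among the $\Sigma_t^{(k)}$.

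Where you diverge is step~(b), and here the paper takes a genuinely simpler route that bypasses the jump analysis entirely. Since $A(t):=|\partial^*\Omega_t|$ is non-increasing and $\phi_m$ is increasing, $t\mapsto\phi_m(A(t))$ is monotone non-increasing; any monotone function satisfies $\phi_m(A(t))-\phi_m(A(0))\le\int_0^t\tfrac{d}{ds}\phi_m(A(s))\,ds$ (jumps and singular part only help). Meanwhile $V(t)$ is absolutely continuous (Lemma~\ref{lemma:AC}), so $V(t)-V(0)=\int_0^t V'(s)\,ds$ exactly. Subtracting gives
\[
\bigl[\phi_m(A(t))-V(t)\bigr]-\bigl[\phi_m(A(0))-V(0)\bigr]\le\int_0^t\tfrac{d}{ds}\bigl[\phi_m(A(s))-V(s)\bigr]\,ds,
\]
and the integrand is $\le 0$ at every regular time by step~(a); since singular times have measure zero, done. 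Your component-matching across singular times would ultimately work, but note a loose end: you refer to the ``countably many'' singular times, yet only the \emph{freeze} times are known to be countable (Proposition~\ref{prop_U_infty}(C)); the MCF singular times merely have measure zero. What actually makes your step~(b) go through is again monotonicity of $A(t)$ (so $\phi_m(A(t))$ can jump only downward, at countably many $t$), which is precisely the observation the paper exploits to avoid step~(b) altogether.
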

Note that $\phi_m(|\partial^* \Omega_t|)$ is defined for $t \in [0,T)$: if $t < T$, the flow is not completely frozen, so that there exists a component of $\Omega_t$ with perimeter $\geq 36\pi m^2 > 16\pi m^2$. If $|\partial^* \Omega_T| < 16\pi m^2$, we define $\phi_m(|\partial^* \Omega_T|)$ to be zero (in which case $t \mapsto \phi_m(|\partial^* \Omega_t|)$ remains monotone on $[0,T]$).

\begin{proof}
We first establish the claim over an interval $[t_1,t_2]$ that contains no singular times of the modified flow. In particular, there are no freeze times in $[t_1, t_2]$, though there may be some components that are already frozen (and thus unchanging in the interval $[t_1, t_2]$), while the remaining components just evolve under smooth, classical mean curvature flow. For $t \in [t_1,t_2]$, let  $\Omega^1_t,\ldots,\Omega^\ell_t$ denote the \emph{unfrozen} components of $\Omega_t$, and let $\Omega^0$ denote the union of the frozen components. By a frozen component, we just mean a component that has perimeter $\le 36\pi m^2$, or equivalently, a component that equals one of the components of $U_\infty$. As mentioned, $\partial \Omega^1_t,\ldots,\partial \Omega^\ell_t$ are smooth and evolve by smooth MCF.
We know that there are only finitely many components, because at a regular time, $\partial \Omega_t$ must be a compact surface.

Define $V(t)=|\Omega_t|$, $V_i(t)=|\Omega^i_t|$ for each $i=1,\ldots,\ell$, and $V_0=|\Omega^0|$. We also define $A(t)=|\partial^* \Omega_t|$, $A_i(t)=|\partial^*\Omega^i_t|$ for each $i=1,\ldots,\ell$, and $A_0=|\partial^*\Omega^0|$.  Note that each $\partial \Omega^i_t$ must be connected (by Lemma \ref{lemma_connected_boundary}) and have area $>36\pi m^2$ (or else it would be frozen). Also, each $\Omega^i_t$ is outward-minimizing, since it is a component of the unmodified flow (which is outward-minimizing by property (2) of the level set flow). Thus, the Hawking mass of $\partial \Omega^i_t$ is bounded above by $m$ by Theorem \ref{thm:haw-adm}. Applying Huisken's relative volume monotonicity (Proposition \ref{thm:huisken-monotonicity}) to each unfrozen component, we have for each $i$, 
\[ \phi_m(A_i(t_1)) - V_i(t_1)\ge  \phi_m(A_i(t_2)) - V_i(t_2).\]
Therefore
\begin{align}
 \sum_{i=1}^\ell  [\phi_m(A_i(t_1))- V_i(t_1)] -V_0 &\ge
 \sum_{i=1}^\ell  [\phi_m(A_i(t_2)) - V_i(t_2) ] - V_0  \nonumber \\
  \sum_{i=1}^\ell  \phi_m(A_i(t_1))-V(t_1) &\ge \sum_{i=1}^\ell  \phi_m(A_i(t_2))   - V(t_2). \label{first-thing}
 \end{align}
On the other hand, using the fact $36\pi m^2 \le A_i(t_2) < A_i(t_1)$, we can apply Lemma \ref{lemma_phi_multi_monotonicity} to see that 
\begin{align}
 \phi_m\left( A_0 + \sum_{i=1}^\ell A_i(t_1)\right) - \sum_{i=1}^\ell  \phi_m(A_i(t_1)) &\ge  \phi_m\left( A_0 + \sum_{i=1}^\ell A_i(t_2)\right) - \sum_{i=1}^\ell  \phi_m(A_i(t_2)) \nonumber\\
  \phi_m(A(t_1)) - \sum_{i=1}^\ell  \phi_m(A_i(t_1)) &\ge  \phi_m(A(t_2)) - \sum_{i=1}^\ell  \phi_m(A_i(t_2)). \label{second-thing}
\end{align}
Adding together \eqref{first-thing} and \eqref{second-thing} yields the desired result, under the smoothness assumption.

In general, we know that $\phi_m(A(t))$ and $V(t)$ possess derivatives almost everywhere on $[0,T]$, by monotonicity. Since $V(t)$ is also absolutely continuous (by Lemma \ref{lemma:AC}), we have for $t \in [0,T]$,
\begin{align*}
V(t) - V(0) &= \int_0^t V'(t) dt\\
\phi_m(A(t)) - \phi_m(A(0)) &\leq \int_0^t \frac{d}{dt} \phi_m(A(t)) dt.
\end{align*}
Thus, 
\[ [\phi_m(A(t)) - V(t)] - [\phi_m(A(0)) - V(0)] \leq \int_0^t \frac{d}{dt} [\phi_m(A(t)) - V(t)] \,dt.\]
Now let $R \subset [0,T]$ be the set of regular times of the modified flow. For each $t_0\in R$, the unfrozen components of $\partial\Omega_t = \partial K_t$ are smooth and have perimeter $>36\pi m^2$. So for some $\epsilon>0$, the modified flow on the interval $[t_0, t_0+\epsilon)$ simply flows those components smoothly without any new freezing occurring. We can apply the smooth case to see that $\frac{d}{dt} \left(\phi_m(A(t)) - V(t)\right)\le 0$ at $t=t_0$, completing the proof, since the measure of $[0,T] \smallsetminus R$ is zero.
\end{proof}

\begin{cor}[Control of the isoperimetric ratio]
\label{cor_isoperimetric} 
Let $(M,g)$ and $m$ be as described in Assumption \ref{assumption1}, let  $\Omega_t$ be the modified level set flow, and let $T$ be as in Lemma \ref{lemma_area}. Define $I(t) := \frac{|\partial^* \Omega_t|^{3/2}}{|\Omega_t|}$, the isoperimetric ratio of $\Omega_t$.
Suppose $\phi_m(|\partial^* \Omega_0|) \leq |\Omega_0|$. Then for $t \in [0, T]$,
$I(t) \leq I(0)$.
\end{cor}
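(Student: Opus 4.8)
The plan is to read the statement off directly from Proposition~\ref{thm_monotonicity_general} (relative volume monotonicity of the modified flow) together with the calculus estimate in Lemma~\ref{lemma_A_monotone}. Write $A(t)=|\partial^*\Omega_t|$ and $V(t)=|\Omega_t|$. The case $T=0$ is vacuous, so assume $T>0$; then for every $t\in[0,T)$ the flow is not completely frozen, so some component of $\Omega_t$ has perimeter $\geq 36\pi m^2$, whence $A(t)\geq 36\pi m^2$ for all $t\in[0,T)$, and in particular $\phi_m(A(0))$ is defined. Set
\[ a := V(0)-\phi_m(A(0)), \]
which is $\geq 0$ precisely by the hypothesis $\phi_m(|\partial^*\Omega_0|)\leq|\Omega_0|$. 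By Proposition~\ref{thm_monotonicity_general}, $t\mapsto\phi_m(A(t))-V(t)$ is non-increasing on $[0,T]$, so
\[ \phi_m(A(t))-V(t)\ \leq\ \phi_m(A(0))-V(0)\ =\ -a,\qquad\text{hence}\qquad V(t)\ \geq\ a+\phi_m(A(t)) \]
for every $t\in[0,T]$.

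Now fix $t\in[0,T)$. Since the modified flow has non-increasing perimeter we have $A(t)\leq A(0)$, and both values lie in $[36\pi m^2,\infty)$, where Lemma~\ref{lemma_A_monotone} (applied with the constant $a\geq 0$ just defined) tells us that $A\mapsto A^{3/2}/(a+\phi_m(A))$ is non-decreasing. Therefore
\[ I(t)\ =\ \frac{A(t)^{3/2}}{V(t)}\ \leq\ \frac{A(t)^{3/2}}{a+\phi_m(A(t))}\ \leq\ \frac{A(0)^{3/2}}{a+\phi_m(A(0))}\ =\ \frac{A(0)^{3/2}}{V(0)}\ =\ I(0), \]
where the first inequality uses $V(t)\geq a+\phi_m(A(t))$ and the final equality uses $a+\phi_m(A(0))=V(0)$. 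This proves the claim on $[0,T)$.

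It remains to handle $t=T$, which I would obtain by letting $t\to T^-$: the volume $V(t)\to V(T)$ by absolute continuity (Lemma~\ref{lemma:AC}), while $A(t)$ decreases to some limit $L\geq A(T)$ by lower semicontinuity of perimeter, since the characteristic functions of $\Omega_t$ converge in $L^1$ to that of $\Omega_T=U_\infty$ (Lemma~\ref{lemma_area}). Assuming $V(T)>0$ (otherwise the flow has vanished/frozen to a null set at time $T$ and there is nothing to prove there), this gives $I(T)=A(T)^{3/2}/V(T)\leq L^{3/2}/V(T)=\lim_{t\to T^-}I(t)\leq I(0)$.

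I do not anticipate a genuine obstacle: the substance is entirely contained in Proposition~\ref{thm_monotonicity_general} and Lemma~\ref{lemma_A_monotone}. The only points requiring care are checking that the hypotheses of Lemma~\ref{lemma_A_monotone} are in force — namely $a\geq 0$ (from the hypothesis of the corollary) and $A(t)\in[36\pi m^2,\infty)$ for $t<T$ (from the fact that the flow is not completely frozen before time $T$) — and the short limiting argument at the endpoint $t=T$.
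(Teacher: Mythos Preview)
Your proof is correct and follows exactly the same route as the paper: set $a=V(0)-\phi_m(A(0))\ge 0$, use Proposition~\ref{thm_monotonicity_general} to get $V(t)\ge a+\phi_m(A(t))$, then apply Lemma~\ref{lemma_A_monotone} together with $A(t)\le A(0)$. Your treatment is slightly more careful than the paper's in that you explicitly verify $A(t)\ge 36\pi m^2$ for $t<T$ before invoking Lemma~\ref{lemma_A_monotone}, and you handle the endpoint $t=T$ by a separate limiting argument; the paper simply writes the chain of inequalities for $t\in[0,T]$ without isolating this endpoint.
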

\begin{proof}With notation as in the proof of Proposition \ref{thm_monotonicity_general},
\begin{align*}
I(t) &= \frac{A(t)^{3/2}}{V(t)}\\
&\leq \frac{A(t)^{3/2}}{V(0)-\phi_m(A(0))+\phi_m(A(t))}\\
&\leq \frac{A(0)^{3/2}}{V(0)-\phi_m(A(0))+\phi_m(A(0))}\\
&=I(0),
\end{align*}
where we have used Proposition \ref{thm_monotonicity_general}, Lemma \ref{lemma_A_monotone} (with the choice of $a =  V(0)- \phi_m(A(0)) \geq 0$), and the fact that $A(t) \leq A(0)$.
\end{proof}
Although Corollary \ref{cor_isoperimetric} only applies if $\phi_m(|\partial^* \Omega_0|) \leq |\Omega_0|$, it turns out that this is the only case in which it is needed.

\section{Proof of Theorem \ref{thm:iso-adm2}}
\label{sec:proof1}

In order to prove Theorem \ref{thm:iso-adm2}, we must connect the area bound on components of the final state $\Omega_T$ of the modified level set flow (see Proposition \ref{prop_U_infty}(B) and Lemma \ref{lemma_area}) to a volume bound. The complication here is that we only have perimeter bounds for the individual components of $\Omega_T$ rather than for the total perimeter. The following lemma takes care of this complication.

\begin{lemma}
\label{lemma_isoperimetric}
Let $(M, g)$ be a Riemannian 3-manifold with positive isoperimetric constant $c=c(M,g)$. 
Fix a constant $\alpha > 0$. Suppose $\Omega \subset M$ is a bounded, open set whose perimeter is finite and equals the sum of the perimeters of its components.
Suppose that every component of $\Omega$ has perimeter at most $\alpha$.
Then
$$|\Omega| \leq  c^{-3} \alpha^{3/2} I^2,$$
where $I = \frac{|\partial^* \Omega|^{3/2} }{ |\Omega|}$ is the isoperimetric ratio of $\Omega$. 
\end{lemma}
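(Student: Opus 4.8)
The plan is to decompose $\Omega$ into its countably many components $\{C_j\}$, apply the isoperimetric inequality for $(M,g)$ to each one separately, and then sum, exploiting the smallness hypothesis $|\partial^* C_j| \le \alpha$ to convert the exponent $3/2$ into a linear bound in a way that plays well with summation. Concretely, for each component $C_j$ the isoperimetric inequality gives $|\partial^* C_j|^{3/2} \ge c\,|C_j|$, i.e. $|C_j| \le c^{-1} |\partial^* C_j|^{3/2}$. The difficulty is that $\sum_j |\partial^* C_j|^{3/2}$ is not controlled by $\left(\sum_j |\partial^* C_j|\right)^{3/2}$ directly; this is where the uniform bound $|\partial^* C_j| \le \alpha$ enters, via $|\partial^* C_j|^{3/2} = |\partial^* C_j|^{1/2}\cdot |\partial^* C_j| \le \alpha^{1/2}\,|\partial^* C_j|$.

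Carrying this out: first I would write, using the hypothesis that the perimeter of $\Omega$ equals the sum of the perimeters of its components,
\[
|\Omega| = \sum_j |C_j| \le c^{-1}\sum_j |\partial^* C_j|^{3/2} \le c^{-1}\alpha^{1/2}\sum_j |\partial^* C_j| = c^{-1}\alpha^{1/2}\,|\partial^* \Omega|.
\]
Next I would eliminate $|\partial^* \Omega|$ in favor of the isoperimetric ratio $I$ of $\Omega$ itself. Since $I = |\partial^* \Omega|^{3/2}/|\Omega|$, we have $|\partial^* \Omega| = \left(I\,|\Omega|\right)^{2/3}$. Substituting this into the previous display yields
\[
|\Omega| \le c^{-1}\alpha^{1/2}\left(I\,|\Omega|\right)^{2/3} = c^{-1}\alpha^{1/2} I^{2/3} |\Omega|^{2/3}.
\]
Dividing through by $|\Omega|^{2/3}$ (assuming $|\Omega| > 0$; the case $|\Omega| = 0$ is trivial) gives $|\Omega|^{1/3} \le c^{-1}\alpha^{1/2} I^{2/3}$, and cubing both sides produces $|\Omega| \le c^{-3}\alpha^{3/2} I^2$, which is exactly the claimed bound.

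I expect the only genuinely delicate point to be bookkeeping with the countable decomposition: one must know that $\Omega$ really is the disjoint union of its components, that $\sum_j |C_j| = |\Omega|$ (true since the components are disjoint open sets exhausting $\Omega$ up to a measure-zero set), and — crucially — that $|\partial^* \Omega| = \sum_j |\partial^* C_j|$, which is precisely the hypothesis supplied in the statement (and which in the intended application comes from Proposition \ref{prop_u_properties}(iv) / Proposition \ref{prop_U_infty}(D)). Given that hypothesis, no measure-theoretic subtlety remains, and the inequality chain above is essentially a two-line computation once the substitution $|\partial^*\Omega| = (I|\Omega|)^{2/3}$ is made.
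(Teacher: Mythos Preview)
Your proof is correct and follows essentially the same approach as the paper: apply the isoperimetric inequality to each component, use the bound $|\partial^* C_j|^{3/2}\le \alpha^{1/2}|\partial^* C_j|$, sum (invoking the hypothesis that the total perimeter is the sum of the component perimeters), and then substitute $|\partial^*\Omega| = (I|\Omega|)^{2/3}$ before cubing. The only cosmetic difference is that the paper writes each component's perimeter as $\alpha r_k$ with $r_k\le 1$ rather than bounding $|\partial^* C_j|$ by $\alpha$ directly.
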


\begin{proof}
Since $\Omega$ is an open set of finite volume, it has at most countably many components. By assumption, 
the perimeters of these components can be written as $\alpha r_1, \alpha r_2, \alpha r_3, \ldots$, where each $r_k\le 1$, and also
\[ |\partial^* \Omega| = \sum_{k=1}^\infty \alpha r_k.\]
Using the isoperimetric constant $c=c(M,g)$, we can bound the volume of each component in terms of its perimeter to obtain
\begin{align*}
|\Omega| & \le \sum_{k=1}^\infty c^{-1} (\alpha r_k)^{3/2} \\
& \le  \sum_{k=1}^\infty c^{-1} \alpha^{3/2}  r_k \\
& = c^{-1} \alpha^{1/2} |\partial^* \Omega| \\
& = c^{-1} \alpha^{1/2} I^{2/3} |\Omega|^{2/3},
\end{align*}
where we used the fact that $r_k\le 1$ and the definition of $I$. The result now follows by cubing and dividing by $|\Omega|^2$.
\end{proof}

\begin{lemma}
\label{thm_volume}
Let $(M,g)$ and $m$ be as described in Assumption \ref{assumption1}, let  $\Omega_t$ be the modified level set flow, and let $T$ be as in Lemma \ref{lemma_area}. If $\phi_m(|\partial\Omega_0|)\le |\Omega_0|$, then the final state $\Omega_T$ of the modified level set flow satisfies
\[ |\Omega_T| \leq c^{-3} (36\pi m^2)^{3/2} I^2,\] where $c$ is the isoperimetric constant of $(\Omega_0, g)$, and $I$  is the isoperimetric ratio of $\Omega_0$.
\end{lemma}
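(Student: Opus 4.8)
The plan is to assemble three ingredients already established in the excerpt — the identification $\Omega_T = U_\infty$ (Lemma~\ref{lemma_area}), the structural properties of $U_\infty$ (Proposition~\ref{prop_U_infty}), and the isoperimetric-ratio control (Corollary~\ref{cor_isoperimetric}) — and feed them into the purely measure-theoretic estimate of Lemma~\ref{lemma_isoperimetric}.

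First I would invoke Lemma~\ref{lemma_area} to write the final state of the modified flow as $\Omega_T = U_\infty$. If $U_\infty$ is empty (for instance when $m=0$) the conclusion is trivial, and likewise if the isoperimetric constant $c=c(\Omega_0,g)$ vanishes the right-hand side is $+\infty$; so we may assume $U_\infty\neq\emptyset$ and $c>0$, which is also what is needed to apply Lemma~\ref{lemma_isoperimetric}. Next I would record the two facts about $U_\infty$ that make that lemma applicable: by Proposition~\ref{prop_U_infty}(B) every component of $U_\infty$ has perimeter $\leq 36\pi m^2$, and by Proposition~\ref{prop_U_infty}(D) the perimeter of $U_\infty$ is finite and equals the sum of the perimeters of its components. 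Since by Proposition~\ref{prop_U_infty}(A) each component of $U_\infty$ is a component of some $\Omega_{t_0}$, hence an allowable region contained in $\Omega_0$, one may apply Lemma~\ref{lemma_isoperimetric} to the open set $U_\infty$ inside the Riemannian manifold $(\Omega_0,g)$, with $\alpha=36\pi m^2$ and isoperimetric constant $c=c(\Omega_0,g)$, to obtain
\[ |\Omega_T| = |U_\infty| \;\leq\; c^{-3}\,(36\pi m^2)^{3/2}\,\widetilde I^{\,2}, \qquad \widetilde I := \frac{|\partial^* U_\infty|^{3/2}}{|U_\infty|}. \]
Here volumes and $c$ are taken with the usual convention that discards the contribution of any fill-in components of $\Omega_0$; this is exactly the convention under which the proof of Lemma~\ref{lemma_isoperimetric} — an iterated use of $|\partial^*\cdot|^{3/2}\geq c\,|\cdot|$ on the individual components, followed by Proposition~\ref{prop_U_infty}(D) — goes through.

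It then remains to replace $\widetilde I$ by the isoperimetric ratio $I$ of $\Omega_0$, and this is precisely where the hypothesis $\phi_m(|\partial\Omega_0|)=\phi_m(|\partial^*\Omega_0|)\leq|\Omega_0|$ enters: it licenses Corollary~\ref{cor_isoperimetric}, which gives $I(t)\leq I(0)=I$ for all $t\in[0,T]$. Taking $t=T$ and using $\Omega_T=U_\infty$ yields $\widetilde I = I(T)\leq I$; substituting into the displayed inequality gives $|\Omega_T|\leq c^{-3}(36\pi m^2)^{3/2}I^2$, as desired.

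I do not expect a genuine obstacle: the argument is an assembly of results already in hand. The one point requiring care is the bookkeeping around the fill-in region — one must verify that $U_\infty$ and each of its components are allowable regions of $\Omega_0$, so that the isoperimetric inequality with constant $c(\Omega_0,g)$ applies to them and so that the volume appearing in $\widetilde I$ is the same fill-in-neglecting volume used in Corollary~\ref{cor_isoperimetric}. This follows from Proposition~\ref{prop_U_infty}(A) together with the elementary fact that every component of an allowable region is itself allowable.
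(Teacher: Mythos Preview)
Your proposal is correct and follows essentially the same approach as the paper: combine Lemma~\ref{lemma_area} and Proposition~\ref{prop_U_infty}(B),(D) to verify the hypotheses of Lemma~\ref{lemma_isoperimetric}, then use Corollary~\ref{cor_isoperimetric} (which requires the hypothesis $\phi_m(|\partial\Omega_0|)\le|\Omega_0|$) to replace $I(T)$ by $I$. Your additional care with the fill-in bookkeeping and the trivial cases is welcome but does not represent a different route.
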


\begin{proof}
By Corollary \ref{cor_isoperimetric}, the isoperimetric ratio $I(T)$ of $\Omega_T$ is bounded above by the isoperimetric ratio $I$ of $\Omega_0$. By Lemma \ref{lemma_area} and Proposition \ref{prop_U_infty}(B),  each component of the final state $\Omega_T$ has perimeter $\leq 36\pi m^2$. By Lemma \ref{lemma_isoperimetric}, the volume $|\Omega_T|$ of the final state of the flow is therefore bounded above by $c^{-3} (36\pi m^2)^{3/2} I^2$, where $c$ is the isoperimetric constant of $(\Omega_0,g)$.  (Here, we used Proposition \ref{prop_U_infty}(D) to be sure that the hypotheses of Lemma \ref{lemma_isoperimetric} are satisfied.)
\end{proof}

Finally, we have all the  ingredients needed to prove Theorem \ref{thm:iso-adm2}.
\begin{proof}[Proof of Theorem \ref{thm:iso-adm2}]
Fix constants $\mu_0>0, c_0>0,I_0>0$. 
Let $(M,g)$ be a smooth asymptotically flat 3-manifold whose boundary is empty or minimal, with  nonnegative scalar curvature and interior compact minimal surfaces in its interior.
 Let $\Omega$ be an outward-minimizing allowable region in $M$ whose boundary does not touch $\partial M$.  At first, we assume that $\partial \Omega$ is smooth and strictly mean-convex. Assume $|\partial \Omega| \geq 36\pi \mu_0^2$, $m:=m_{ADM}(M,g) \leq \mu_0$, $c:=c(M,g) \geq c_0$, and the isoperimetric ratio $I(\Omega)$ is $ \leq I_0$.  Note that Assumption \ref{assumption1} holds for $K_0 = \overline{\Omega}$.

First, by Proposition \ref{thm_monotonicity_general},
\begin{align}
\miso(\Omega) &= \frac{2}{|\partial \Omega|}\left( |\Omega| - \frac{1}{6\sqrt{\pi}}|\partial \Omega|^{3/2}\right)\label{eqn_def_miso_omega}\\
&\leq \frac{2}{|\partial \Omega|}\left( \phi_m(|\partial \Omega|) +|\Omega_T|- \frac{1}{6\sqrt{\pi}}|\partial \Omega|^{3/2}\right),\nonumber
\end{align}
where $\Omega_T$ is the final state of the modified level set flow beginning at $\ol \Omega$.  (Note $\phi_m(|\partial \Omega|)$ is defined, because $|\partial \Omega| \geq 36\pi \mu_0^2 \geq 16\pi m^2.$) Now, if $\phi_m(|\partial \Omega|) \leq |\Omega|$, then by Lemma \ref{thm_volume},
\begin{align}
\miso(\Omega) &\leq \frac{2}{|\partial \Omega|}\left( \phi_m(|\partial \Omega|) +c^{-3} (36\pi m^2)^{3/2} I(\Omega)^2 -\frac{1}{6\sqrt{\pi}}|\partial \Omega|^{3/2}\right)\nonumber\\
&\leq \frac{2}{|\partial \Omega|}\left( \phi_m(|\partial \Omega|) +c_0^{-3} (36\pi \mu_0^2)^{3/2} I_0^2 -\frac{1}{6\sqrt{\pi}}|\partial \Omega|^{3/2}\right).\label{eqn:Omega}
\end{align}
 On the other hand, if $ |\Omega|< \phi_m(|\partial \Omega|)$, then (\ref{eqn:Omega}) follows trivially from the definition of $\miso$. By Lemma \ref{lemma_lim_phi_m}, we now have
\begin{align*}
\miso(\Omega) &\leq m + O(|\partial \Omega|^{-1/2}) +\frac{2c_0^{-3} (36\pi \mu_0^2)^{3/2} I_0^2}{|\partial \Omega|}\\
&\leq m + O(|\partial \Omega|^{-1/2}) +\frac{2c_0^{-3} (36\pi \mu_0^2) I_0^2}{|\partial \Omega|^{1/2}},
\end{align*}
since $|\partial \Omega| \geq 36\pi \mu_0^2$, where ``$O$'' depends only on $\mu_0$. The result now follows, under the assumption that $\partial \Omega$ is smooth and strictly mean-convex. 

Last, if $\partial \Omega$ is merely $C^{1,1}$ and/or not strictly mean convex, we apply a smoothing process. By \cite[Lemma 5.6]{Huisken-Ilmanen:2001} and the fact $\partial \Omega$ can be pushed inward without touching $\partial M$, $\Omega$ may be approximated from the inside by outward-minimizing allowable regions $\Omega^{(\epsilon)}$ with smooth boundary and strictly positive mean curvature, where $\partial \Omega^{(\epsilon)} \to \partial \Omega$ in $C^1$ as $\epsilon \to 0$. In particular, applying the above argument for $K_0 = \overline{\Omega^{(\epsilon)}}$ and letting $\epsilon \to 0$ suffices to establish the same bound\footnote{An alternative to the smoothing argument is to use an extension of the level flow developed by Metzger and Schulze for an initial region whose boundary is merely $C^{1}$ with nonnegative weak mean curvature in $L^2$ \cite{Metzger-Schulze:2008}. Their work assumes a Euclidean ambient space but is expected to generalize to a Riemannian manifold.}.
\end{proof}

\section{Proof of Theorem \ref{thm:main}}
\label{sec:proof2}

Assume the hypotheses of Theorem \ref{thm:main}, and let $m_i:=\madm(M_i,g_i)$.
The claim is trivial if $\liminf_{i \to \infty} m_i= +\infty$. Thus, without loss of generality, we may pass to a subsequence and assume that $\{m_i \}$ is uniformly bounded above, say by a constant $\mu>0$.  Note that each $m_i \geq 0$  by the positive mass theorem.

Let $\epsilon>0$. Choose a constant $R_0$ sufficiently large so that
\begin{align}
\pi R_0^2 & \geq 36\pi \mu^2  \label{eqn_pi_mu}\\
\frac{C}{\sqrt{\pi}R_0} &< \epsilon, \label{eqn_C_R_0}
\end{align}
where $C$ is the constant in Theorem \ref{thm:iso-adm2} corresponding to the upper bound $\mu$ for the ADM mass, the lower bound on the isoperimetric constants given by $\tfrac{1}{2}c(N,h)$, which is positive by Lemma \ref{sop-lower-bound}, and the upper bound of $8\sqrt{\pi}$ for the isoperimetric ratio. Fix a $C^0$ asymptotically flat coordinate chart for $(N,h)$.

For now, assume that $\miso(N,h)<\infty$. Let $B_r$ denote the bounded open set enclosed by the coordinate sphere $\partial B_r$ of radius $r$. 
Choose an allowable region $\Omega \subset N$ so that
\begin{enumerate}[(i)]
\item $\miso(N,h) < \miso(\Omega,h) + \epsilon$.
\item $B_{R_0}\subset \Omega $. In particular, $\Omega$ contains $\partial N$.
\item $\partial\Omega$ is smooth and connected, and the isoperimetric ratio of $\Omega$ with respect to $h$ is at most $7\sqrt{\pi}$. (Here, we used Lemma \ref{lemma:isop_mass}.) In particular, $\Omega$ is connected.
\item The ratio of areas measured by $h$ and $\delta$ on $N \smallsetminus \Omega$ is bounded above by $2$ (which is possible by $C^0$ asymptotic flatness).
\end{enumerate}
Choose $R_1>0$ large enough so that $\Omega\subset B_{R_1}$ and 
\begin{equation}\label{size-R}
|\partial\Omega|_h + 8\pi\mu^2 < \frac{\pi}{32}R_1^2.
\end{equation}
 Now choose $R_2>0$ such that the ball $B_h(q,R_2)$ contains $B_{3R_1}$, and take the embeddings $\Phi_i: U \supset B_h(q,R_2) \longrightarrow M_i$ in accordance with the definition of pointed $C^0$ Cheeger--Gromov convergence (Definition \ref{def:CG}), for $i \geq$ some $i_0$. See Figure \ref{fig_lsc} for a depiction of the setup.

\begin{figure}[ht]
\begin{center}
\includegraphics[scale=0.50]{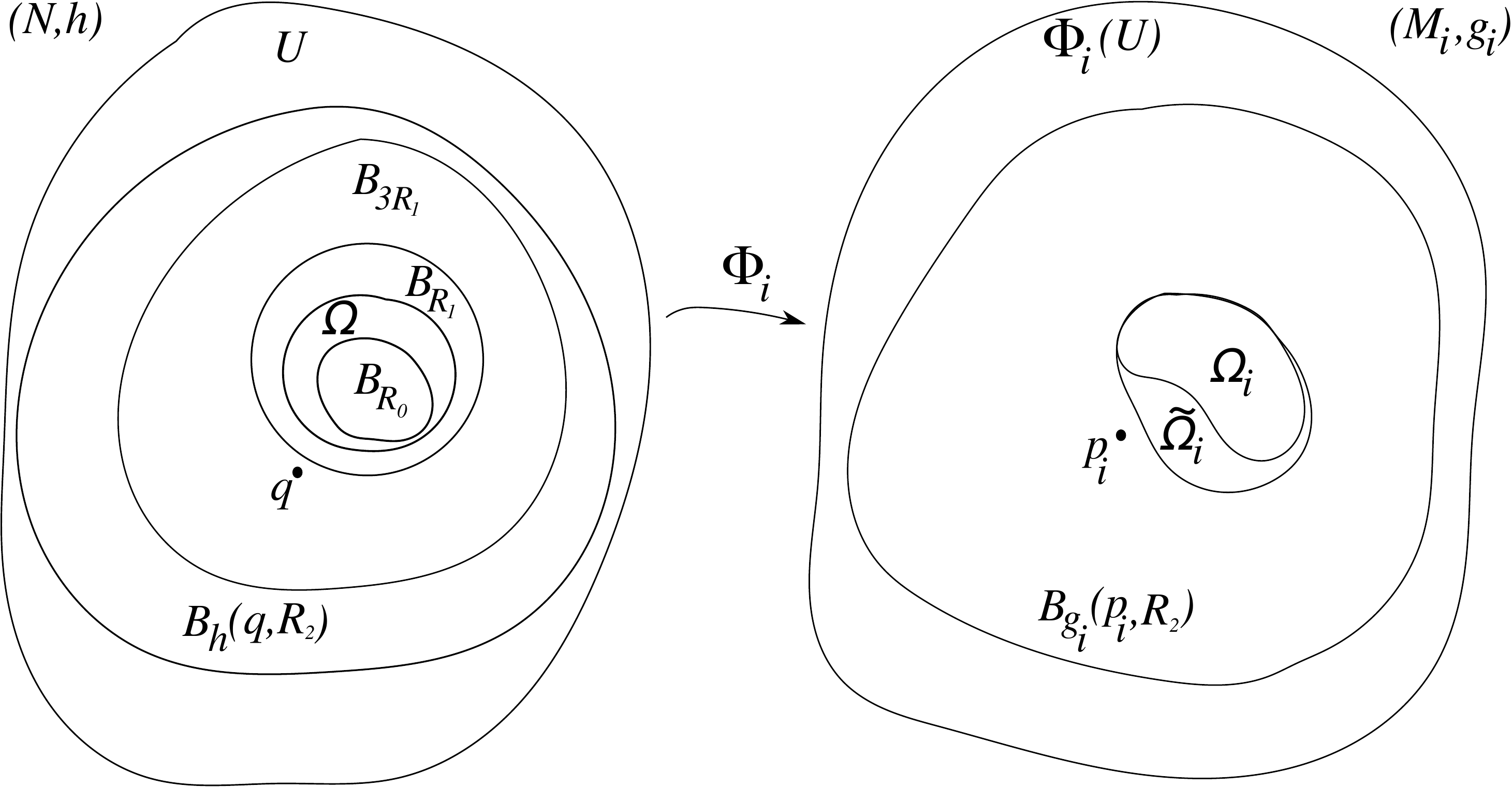}
\caption{\small The sets and maps used in the proof of Theorem \ref{thm:main}. Note $\Omega_i := \Phi_i(\Omega)$, and $\tilde \Omega_i$ is the minimizing hull of $\Omega_i$. The containment $\tilde\Omega_i \subset \Phi_i(U)$ is justified in Lemma \ref{lemma:tentacle}. 
 \label{fig_lsc}} 
 \end{center}
\end{figure}

Let $h_i := \Phi_i^* g_i$ on $U$ for $i \geq i_0$, so that $\Phi_i: (U,h_i) \longrightarrow (M_i,g_i)$ is trivially an isometry onto its image. The Cheeger--Gromov convergence means that $h_i \to h$ uniformly on $U$ (away from any fill-in regions). Thus, for $i \geq$ some $i_1 \geq i_0$, the isoperimetric ratio of $\Omega$ with respect to $h_i$ is at most $8\sqrt{\pi}$ (by (iii) above), the ratio of areas as measured by $h$ and $h_i$ are at most 2 on $U$, and also
\begin{equation}
\label{eqn_miso_K}
|\miso(\Omega,h_i) - \miso(\Omega,h)| < \epsilon.
\end{equation}

Since $\Phi_i$ is a smooth embedding, the set $\Omega_i:=\Phi_i(\Omega)$ is an allowable region in $M_i$ with smooth connected boundary $\partial \Omega_i$ (by (iii)), though it need not be outward-minimizing. Since we would like to apply Theorem \ref{thm:iso-adm2}, we consider the minimizing hull $\tilde\Omega_i$ of $\Omega_i$ in $(M_i^{\textrm{fill}}, g_i)$. 

Since $\Omega_i$ has smooth boundary, $\tilde \Omega_i$ is $C^{1,1}$ by Theorem \ref{regularity-thm}. We already have the upper bound $\mu$ for $\madm(M_i, g_i)$, as well as the upper bound $8\sqrt{\pi}$ for the isoperimetric ratio of $\tilde\Omega_i$ with respect to $g_i$.

The main issue is the lower bound for the isoperimetric constant of $(\tilde\Omega_i, g_i)$. But we control it using the following lemma, whose proof we postpone for the moment.

\begin{lemma} \label{lemma:tentacle}
Given the setup above, the closure of $\tilde \Omega_i$ is contained in $\Phi_i(U)$, for $i \geq i_1$.
\end{lemma}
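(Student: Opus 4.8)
\textbf{Proof proposal for Lemma \ref{lemma:tentacle}.}

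The plan is to show that the minimizing hull $\tilde\Omega_i$ cannot extend very far outside $\Omega_i$, by exploiting the fact that $\partial\tilde\Omega_i \smallsetminus \partial\Omega_i$ is a minimal surface (Theorem \ref{regularity-thm}) sitting in a region where the metric $g_i$ is uniformly close to the flat metric $\delta$. First I would use the perimeter bound: since $\tilde\Omega_i$ is the minimizing hull of $\Omega_i$, we have $|\partial^*\tilde\Omega_i|_{g_i} \leq |\partial\Omega_i|_{g_i}$. Because $\Phi_i$ is an $h_i$-to-$g_i$ isometry and $h_i$ is close to $h$ on $U$ (so areas are comparable up to a factor $2$), this gives a bound of roughly $|\partial^*\tilde\Omega_i|_{g_i} \leq 2|\partial\Omega|_h$, which by \eqref{size-R} is much smaller than $\tfrac{\pi}{16}R_1^2$.

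Next, suppose for contradiction that $\tilde\Omega_i$ (or its closure) reaches outside $\Phi_i(U)$; since $\Phi_i(U)$ contains $\Phi_i(B_{3R_1})$ and $\tilde\Omega_i \supset \Omega_i \ni \Phi_i(\partial N)$, this would force $\partial\tilde\Omega_i$ to cross the region between the coordinate spheres of radius, say, $2R_1$ and $3R_1$ (in the $M_i$ coordinates, which agree with the $N$ coordinates pulled back by $\Phi_i$ in the asymptotically flat chart). Here the metric is close to Euclidean, so using the monotonicity formula for minimal surfaces (or simply the fact that a connected minimal surface passing through a point of a ball of radius $\rho$ on which the metric is nearly flat and whose boundary lies outside that ball must have area bounded below by roughly $\pi\rho^2$), the portion of $\partial\tilde\Omega_i$ inside such a ball of radius comparable to $R_1$ would have area bounded below by something like $\tfrac{\pi}{2}R_1^2$. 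But that contradicts the perimeter upper bound from the previous step, since $2|\partial\Omega|_h + 16\pi\mu^2 < \tfrac{\pi}{16}R_1^2$ by \eqref{size-R} (allowing for the contribution of any portion of $\partial\tilde\Omega_i$ coming from fill-in region boundaries, which are minimal surfaces of area $\leq 16\pi\mu^2$ each by the Riemannian Penrose inequality, Theorem \ref{thm:RPI}). One also needs that $\partial\tilde\Omega_i \smallsetminus \partial\Omega_i$ genuinely consists of minimal surfaces disjoint from $\partial M_i$ — this follows from Theorem \ref{regularity-thm} together with the hypothesis that $M_i$ has no compact interior minimal surfaces, so any such component must actually reach into the asymptotically flat end, which is exactly what lets us apply the monotonicity/area-lower-bound estimate there.

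The main obstacle I anticipate is making the "a minimal surface in a nearly-flat region that crosses an annulus has large area" step quantitatively rigorous with only $C^0$ control on the metric: the monotonicity formula for minimal surfaces ordinarily needs some control on the ambient curvature, so instead I would phrase the argument purely in terms of the Euclidean comparison geometry — comparing $g_i$-area with $\delta$-area up to the uniform factor (from (iv) and $C^0$ convergence), and using the elementary fact that in $(\R^3,\delta)$ any connected surface that meets a concentric smaller sphere and has its topological boundary outside the larger sphere must, when intersected with the annular region, have $\delta$-area at least the cross-sectional area of the annulus (a consequence of, e.g., the coarea formula applied to the distance function, since the surface must project onto a full interval of radii). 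Combined with the perimeter bound and the area-distortion estimate, this yields the contradiction and hence the containment $\overline{\tilde\Omega_i} \subset \Phi_i(U)$.
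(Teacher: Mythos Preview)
Your overall strategy is correct and matches the paper's: bound $|\partial\tilde\Omega_i|_{g_i}$ from above using the minimizing-hull property and \eqref{size-R}, bound it from below using the fact that the boundary must pass through the annular region between $\Phi_i(\partial B_{R_1})$ and $\Phi_i(\partial B_{3R_1})$, and derive a contradiction. The perimeter upper bound and the use of the Riemannian Penrose inequality to control boundary-of-fill-in contributions are both as in the paper.

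The genuine gap is in the area \emph{lower} bound. Your approach (b) via the coarea formula does not work: a connected surface crossing the annulus can have arbitrarily small $\delta$-area (picture a thin tube of radius $\epsilon$ running from $\partial B_{R_1}$ to $\partial B_{3R_1}$; coarea only gives $|\Sigma| \geq \int \mathrm{length}(\Sigma\cap\{r=s\})\,ds$, and those cross-sectional lengths can be of order $\epsilon$). So ``projects onto a full interval of radii'' yields no useful bound. Your approach (a) via minimal-surface monotonicity is the right instinct, and you correctly flag that the standard monotonicity formula wants curvature control you do not have. But you then leave this unresolved and fall back on (b).

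The paper's fix is exactly what you are missing: since the relevant piece of boundary is \emph{area-minimizing} with respect to $g_i$ (not merely minimal---this comes from the least-perimeter characterization in Theorem~\ref{regularity-thm}), and since $g_i$-areas and $\delta$-areas differ by a bounded factor on the annulus, that piece is $\gamma$-\emph{almost} area-minimizing with respect to $\delta$ for some explicit $\gamma$. For such currents there is a purely $C^0$ density lower bound (the paper's Lemma~\ref{gamma}): a ball of radius $R_1$ about a point of the surface contains at least $\gamma^{-1}\cdot\pi R_1^2$ worth of area. This is the substitute for monotonicity under $C^0$ control, and it is the missing key lemma in your proposal.

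A secondary point: the paper does not work with $\tilde\Omega_i$ directly but with the component $\Omega_i^\circ$ of $\widetilde{\Omega_i\cup W_i}$ containing $\Omega_i$, precisely to guarantee that the boundary is \emph{connected} and hence genuinely crosses the annulus. Your argument that ``$\partial\tilde\Omega_i$ must cross the region'' needs this kind of connectivity input, and your handling of possible fill-in components outside $\Phi_i(U)$ is loose; passing to $\Omega_i^\circ$ (and using the no-interior-minimal-surface hypothesis plus the maximum principle to rule out stray boundary components) is how the paper makes this step rigorous.
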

It follows from the lemma that $c( \tilde\Omega_i, g_i)\ge c(\Phi_i(U),g_i) =c(U, h_i)$ for $i\geq i_1$. Since $h_i \to h$ uniformly on $U$, we know that $c(U, h_i)\ge \tfrac{1}{2} c(U,h)$ for $i \geq$ some $i_2 \geq i_1$. Finally, since $U\subset N$, we have $c(U,h)\ge c(N,h)$, giving us the desired uniform lower bound $c( \tilde\Omega_i, g_i)\ge  \tfrac{1}{2} c(N,h)$, for $i \geq i_2$.

In order to use Theorem \ref{thm:iso-adm2}, we also need to verify that the perimeter of $\tilde \Omega_i$ is sufficiently large. 
For $i\geq i_2$, (using Lemma \ref{lemma:tentacle} to guarantee $\partial \tilde \Omega_i$ is contained in the image of $\Phi_i$):
\begin{align}
|\partial\tilde\Omega_i|_{g_i} &= 
|\Phi_i^{-1}(\partial\tilde\Omega_i)|_{h_i}\nonumber\\
&\ge \frac{1}{2}|\Phi_i^{-1}(\partial\tilde\Omega_i)|_{h}\nonumber\\
&\ge \frac{1}{4}|\Phi_i^{-1}(\partial\tilde\Omega_i)|_{\delta}\nonumber\\
&\ge \frac{1}{4}|\partial B_{R_0}|_{\delta}\nonumber\\
&= \pi R_0^2. \label{eqn_area_lower_bound}
\end{align}
The first two inequalities follow from the bounds on the area ratios among $h_i$, $h$, and $\delta$. The last inequality uses the outward-minimizing property of spheres in Euclidean space, together with (ii), which says that $B_{R_0}\subset \Omega_i\subset \tilde{\Omega}_i$. In particular, by \eqref{eqn_area_lower_bound} and (\ref{eqn_pi_mu}), we see $|\partial\tilde\Omega_i|_{g_i} \geq 36\pi \mu^2$. 

Finally, note that the boundary of $\tilde \Omega_i$ does not touch the portion of $\partial M_i$ inside of $\Phi_i(U)$, since $\Omega \supset \partial N$ by (ii). Moreover, by Lemma \ref{lemma:tentacle}, $\tilde\Omega_i$ is allowable, and its boundary does not touch the portion of $\partial M_i$ outside $\Phi_i(U)$ either.

At last, we can apply Theorem \ref{thm:iso-adm2} to $\tilde \Omega_i$, noting that all hypotheses hold, for $i \geq i_2$:
\begin{equation}\label{important-inequality}
\miso( \tilde\Omega_i,g_i) < m_i + \frac{C}{\sqrt{|\partial  \tilde\Omega_i|_{g_i}}}.
\end{equation}

Feeding \eqref{eqn_area_lower_bound} into \eqref{important-inequality} and using \eqref{eqn_C_R_0}, we obtain
\begin{equation}
\label{eqn_m_i}
\miso( \tilde\Omega_i,g_i)< m_i + \frac{C}{\sqrt{\pi}R_0} < m_i +\epsilon.
\end{equation}
So for $i \geq i_2$, 
\begin{align*}
\miso(N,h) &< \miso(\Omega,h) + \epsilon &&\text{(by (i))}\\
&<\miso(\Omega,h_i) + 2\epsilon&&\text{(by (\ref{eqn_miso_K}))}\\
& =  \miso(\Omega_i,g_i) +2\epsilon&&\text{($\Phi_i$ is an isometry)}\\
&\le \miso(\tilde\Omega_i,g_i) +2\epsilon&&\text{(see below)}\\
&<m_i + 3\epsilon, &&\text{(by (\ref{eqn_m_i}))}
\end{align*}
where we used the fact that $\tilde\Omega_i$ has greater volume and less perimeter than $\Omega_i$ to compare their quasilocal isoperimetric masses with respect to $g_i$. (This comparison is only valid if $\miso(\Omega_i,g_i)\geq 0$. But if $\miso(\Omega_i,g_i)< 0$, then the last inequality above follows trivially, since $m_i \geq 0$.)  Since $\epsilon$ was arbitrary, the proof of Theorem \ref{thm:main} is complete (except for the proof of Lemma \ref{lemma:tentacle}), in the case that $\miso(N,h)<\infty$. If $\miso(N,h) = +\infty$, instead choose $\Omega$ in (i) so that $\miso(\Omega,h)>\epsilon^{-1}$. The rest of the argument is then identical, but it will conclude that $\liminf_{i \to \infty} m_i =+\infty$, a contradiction.

In order to prove Lemma \ref{lemma:tentacle}, we will use the following technical tool, stated in the language of integral currents.
\begin{definition} 
For any $\gamma\geq 1$, an integral current $S$ in $\R^n$ is \emph{$\gamma$-almost area-minimizing} if, for any ball $B$ with $B\cap\spt \partial S=\emptyset$ and any integral current $T$ with $\partial T=\partial(S\llcorner B)$, $|S\llcorner B|\leq\gamma |T|$.\footnote{The notation $S\llcorner B$ denotes the restriction of $S$ to $B$, which is just $S\cap B$ when $S$ is a submanifold.}
\end{definition}
The following result can be found, for example, in \cite[Lemma 5.1]{Bray-Lee:2009}.
\begin{lemma}\label{gamma}
Let $\gamma\geq 1$, and let $S$ be an $m$-dimensional $\gamma$-almost area-minimizing integral current in 
$\R^n$.  Let $x\in \spt S$, and let 
 $0<r<d(x,\spt\partial S)$.  Then
\[|S\llcorner B_r(x)|\geq \gamma^{1-m}\alpha_m r^m\]
where $\alpha_m$ is the area of the unit $m$-sphere.
\end{lemma}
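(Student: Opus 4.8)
The plan is to prove Lemma~\ref{gamma} by a monotonicity-type argument: derive a pointwise differential inequality for the mass-ratio function and integrate it starting from scale $0$, exactly as in the classical density lower bound for area-minimizing integral currents. The only new point is to carry the multiplicative constant $\gamma$ through the estimate, and this is precisely what produces the factor $\gamma^{1-m}$.

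After translating so that $x=0$, set $u(y)=|y|$, fix $r_0 := d(x,\spt\partial S) > r$, and define $f(\rho) := \|S\|(B_\rho(x))$ for $\rho\in(0,r_0)$. Three elementary observations: $f(\rho)>0$ for every $\rho>0$ because $x\in\spt S$; $f(0^+)=0$ because an integral current charges no point; and $f$ is nondecreasing, hence differentiable a.e. For a.e.\ $\rho\in(0,r_0)$, standard slicing theory for integral currents gives that $S\llcorner B_\rho(x)$ is an integral current whose boundary is the slice $\langle S,u,\rho\rangle$ --- there is no contribution from $\partial S$ since $B_\rho(x)\cap\spt\partial S=\emptyset$ --- that this slice is an integral $(m-1)$-cycle, and that $M(\langle S,u,\rho\rangle)\le f'(\rho)$ by the coarea inequality.

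The heart of the proof is the next step. I would apply the isoperimetric inequality for integral currents to the cycle $\langle S,u,\rho\rangle$ to produce an integral $m$-current $R_\rho$ with $\partial R_\rho=\langle S,u,\rho\rangle$ and $M(R_\rho)\le \beta_m\, M(\langle S,u,\rho\rangle)^{m/(m-1)}$, where $\beta_m$ is the (sharp) isoperimetric constant. Since $\partial R_\rho=\partial(S\llcorner B_\rho(x))$ and $B_\rho(x)\cap\spt\partial S=\emptyset$, the $\gamma$-almost-area-minimizing hypothesis, applied with $B=B_\rho(x)$ and competitor $T=R_\rho$, gives $f(\rho)=M(S\llcorner B_\rho(x))\le\gamma M(R_\rho)\le\gamma\beta_m f'(\rho)^{m/(m-1)}$ for a.e.\ $\rho\in(0,r)$. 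Rearranging, $\frac{d}{d\rho}\big(f(\rho)^{1/m}\big)=\frac{1}{m}f(\rho)^{(1-m)/m}f'(\rho)\ge \frac{1}{m}(\gamma\beta_m)^{-(m-1)/m}$ for a.e.\ $\rho\in(0,r)$; since $\rho\mapsto f(\rho)^{1/m}$ is nondecreasing we may integrate to get $f(r)^{1/m}-f(\epsilon)^{1/m}\ge\frac{1}{m}(\gamma\beta_m)^{-(m-1)/m}(r-\epsilon)$, and letting $\epsilon\to0^+$ yields $f(r)\ge\gamma^{1-m}(m^m\beta_m^{m-1})^{-1}r^m$. With the optimal isoperimetric constant $\beta_m=(m^m\alpha_m)^{-1/(m-1)}$ (realized by a round ball bounding a round sphere, and optimal for integral currents in every $\R^n$ by Almgren's theorem) one has $(m^m\beta_m^{m-1})^{-1}=\alpha_m$, which is the stated bound.

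The step that really requires this care --- and the main obstacle if one tries something cheaper --- is using the full isoperimetric inequality rather than the naive cone over the slice. The cone has mass only $\tfrac{\rho}{m}M(\langle S,u,\rho\rangle)$, linear in the slice mass, and feeding that into the same scheme gives merely $f(\rho)\le\tfrac{\gamma\rho}{m}f'(\rho)$, which integrates to the much weaker $f(r)\ge f(\rho)(r/\rho)^{m/\gamma}$ and, after $\rho\to0$, yields nothing useful. It is precisely the super-linear exponent $m/(m-1)$ from the isoperimetric inequality that converts the scale-invariant estimate into a \emph{constant} lower bound for $(f^{1/m})'$, so that the integration can start cleanly at $\rho=0$. (If only a positive dimensional constant is wanted in place of $\alpha_m$, the non-sharp Federer--Fleming isoperimetric inequality suffices and Almgren's theorem can be avoided.)
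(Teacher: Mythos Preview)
The paper does not prove this lemma; it is quoted without proof from \cite[Lemma~5.1]{Bray-Lee:2009}. So there is no ``paper's proof'' to compare against, and your task is really to give a self-contained argument. Your isoperimetric approach is correct and clean: slice, fill the slice by the isoperimetric inequality, compare via the $\gamma$-almost-minimizing hypothesis, and integrate the resulting differential inequality for $f^{1/m}$. The observation that the cone competitor only yields $f(\rho)\le \tfrac{\gamma\rho}{m}f'(\rho)$, hence monotonicity of $\rho^{-m/\gamma}f(\rho)$ and nothing more when $\gamma>1$, is exactly right and explains why the superlinear exponent from the isoperimetric inequality is essential.

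One correction on the constant. The sharp filling constant is $\beta_m=\omega_m\,\sigma_{m-1}^{-m/(m-1)}$, where $\omega_m$ is the volume of the unit $m$-ball and $\sigma_{m-1}=m\omega_m$ is the $(m{-}1)$-area of its boundary sphere; this gives $(m^m\beta_m^{m-1})^{-1}=\omega_m$, not $\alpha_m$. (Check $m=2$: $\beta_2=1/(4\pi)$, so your formula would give $1/(16\pi)$.) In other words, your argument actually proves $|S\llcorner B_r(x)|\ge \gamma^{1-m}\omega_m r^m$, the natural density bound that for $\gamma=1$ is attained by an $m$-plane. The paper's phrase ``area of the unit $m$-sphere'' for $\alpha_m$ appears to be a slip: with $\alpha_2=4\pi$ and $\gamma=1$ the stated inequality would assert $|S\llcorner B_r|\ge 4\pi r^2$, which already fails for a flat plane. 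For the paper's application this constant discrepancy is harmless (only a positive dimensional constant is needed, and $R_1$ is chosen with room to spare), but you should state the constant as $\omega_m$.
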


\begin{proof}[Proof of Lemma \ref{lemma:tentacle}]
Suppose that the closure of $\tilde\Omega_i$ is not contained in $\Phi_i(U)$. Let $W_i$ be the fill-in region of $M_i$, and let 
 $\widetilde{\Omega_i\cup W_i}$ be the minimizing hull of $\Omega_i\cup W_i$.
Let $\Omega_i^\circ$ be the component of $\widetilde{\Omega_i\cup W_i}$ containing $\Omega_i$, and note that $\Omega_i^\circ$ is allowable. We claim that $\partial\Omega_i^\circ$ must be connected, and that it must touch $\partial\Omega_i$. To see the claim, suppose that $\partial\Omega_i^\circ$ has a component $\Sigma$ disjoint from $\partial\Omega_i$. 
By Theorem \ref{regularity-thm}, $\Sigma$ must be a smooth minimal surface except where it touches $\partial W_i=\partial M_i$. 
By the maximum principle and the fact that there are no minimal surfaces in the interior of $M_i$, it follows that $\Sigma$ must coincide with a component of $\partial W_i$. But this contradicts the connectedness of $\Omega_i^\circ$, proving the claim.

Thus $\partial \Omega_i^\circ$ is connected and touches $\partial \Omega_i$. Since $\Omega_i^\circ \supset \tilde \Omega_i$ (since the former is an outward-minimizing region that contains $\Omega_i$), we see that $\partial \Omega_i^\circ$ is not contained in $\Phi_i(U)$. See Figure~\ref {fig:tentacle} for an illustration of the the setup of this proof. In particular, $\partial \Omega_i^\circ$ must intersect $\Phi_i(\partial B_{R_1})$, $\Phi_i(\partial B_{2R_1})$, and $\Phi_i(\partial B_{3R_1})$. Pulling back to $N$, we see that
\[T_i := \Phi_i^{-1} (\partial \Omega_i^\circ \cap \Phi_i(\text{interior}(B_{3R_1} \smallsetminus B_{R_1})) )\]
 is an area-minimizing surface with respect to $h_i$, without boundary, in $\text{interior}(B_{3R_1} \smallsetminus B_{R_1})$. Moreover, there exists a point $x_i \in T_i \cap \partial B_{2R_1}$. 

\begin{figure}[ht]
\begin{center}
\includegraphics[scale=0.60]{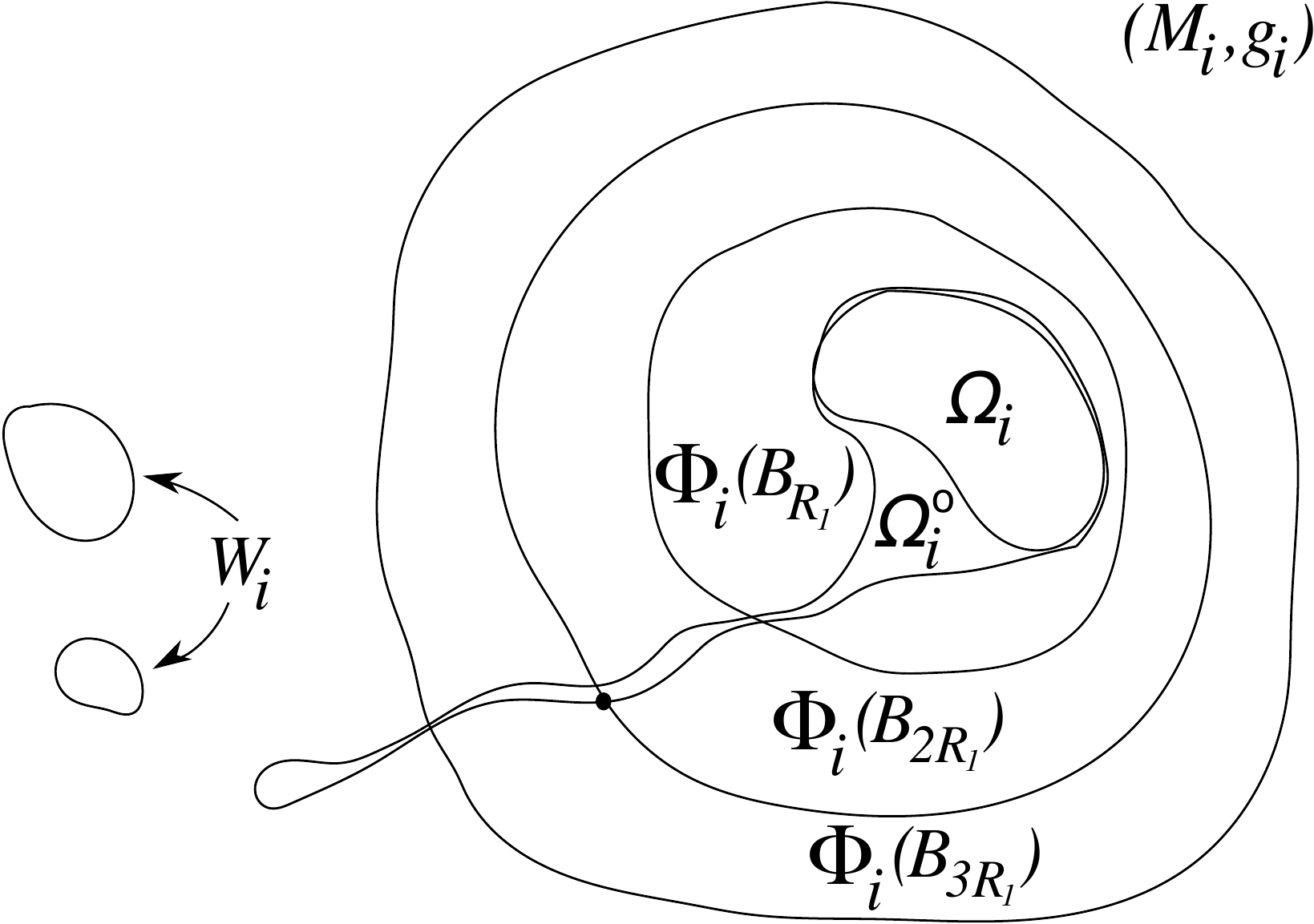}
\caption{\small The setup of the proof of Lemma \ref{lemma:tentacle}. This lemma shows that $\Omega_i^\circ$ cannot actually escape far out into the asymptotically flat end as shown.  \label{fig:tentacle}}
\end{center}
\end{figure}

Since $i \geq i_1$, we know that the ratios of areas measured by $h_i$ and $h$ on $U$ are at most $2$. Then by item (iv), the ratio of areas measured by $h_i$ and $\delta$ is at most $4$, and therefore $T_i$ is $16$-almost area-minimizing in Euclidean $\R^3$. 
Therefore  
\begin{align*}
|\partial \Omega|_h + 8\pi\mu^2
& \ge \frac{1}{2}|\partial\Omega|_{h_i} + 8\pi m_i^2
 &&\text{(comparing area ratio between $h$ and $h_i$)}\\
& \ge \frac{1}{2}|\partial\Omega|_{h_i} + \frac{1}{2}|\partial M_i|_{g_i} 
 &&\text{(Bray's version of the Penrose inequality, Theorem \ref{thm:RPI})}\\
& \ge \frac{1}{2}\left|\partial{(\Omega_i\cup W_i)} \right|_{g_i} &&\text{(since $\Phi_i$ is isometry, and by definition of $W_i$)}\\
& \ge \frac{1}{2}\left|\partial\widetilde {(\Omega_i\cup W_i)} \right|_{g_i} &&\text{(by definition of minimizing hull)}\\
& \ge \frac{1}{2}|\partial\Omega_i^\circ|_{g_i} &&\text{(by definition of $\Omega_i^\circ$)}\\
&\ge \frac{1}{2}|T_i \cap B(x_i,R_1)  |_{h_i} &&\text{($T_i \cap B(x_i,R_1)$ is isometric to  a subset of $\partial \Omega_i^\circ$)}\\
&\ge \frac{1}{8}|T_i \cap B(x_i,R_1)  |_{\delta} &&\text{(comparing area ratio between $h_i$ and $\delta$)}\\
&\ge \frac{1}{8} \cdot\frac{1}{16} 4\pi R_1^2,
\end{align*}
where we applied Lemma \ref{gamma} on the last line. But this contradicts inequality \eqref{size-R}.
\end{proof}

A similar technique was used in \cite{Jauregui} to rule out ``tentacles'' of a minimizing hull extending far out into an asymptotically flat end.

\section*{Appendix: Equivalence of definitions of isoperimetric mass}

Recall Definition \ref{def:isop_mass}, in which $\miso(M,g)$ is defined. The following result is never used in the paper, but it is an interesting fact about isoperimetric mass.

\begin{lemma}
Let $(M,g)$ be a $C^0$ asymptotically flat 3-manifold. We define an alternate version of isoperimetric mass as follows:
\[\widetilde\miso(M,g) = \sup_{\{\Omega_i\}_{i=1}^\infty} \left(\limsup_{i \to \infty} \miso(\Omega_i,g)\right),\]
where the supremum is taken over all sequences $\{ \Omega_i \}_{i=1}^\infty$ of allowable regions such that 
$|\partial^*\Omega_i| \to\infty$ as $i\to\infty$. If $\widetilde\miso(M,g) > 0$, then 
\[ \widetilde\miso(M,g) =\miso(M,g). \]
In other words, defining the isoperimetric mass using exhaustions is equivalent to using sequences whose perimeters become arbitrarily large, when the latter is positive.
 \end{lemma}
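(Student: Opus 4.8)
The inequality $\widetilde{\miso}(M,g) \ge \miso(M,g)$ is immediate, since every exhaustion is in particular a sequence of allowable regions with $|\partial^*\Omega_i| \to \infty$ (the latter follows from the isoperimetric inequality, Lemma \ref{sop-lower-bound}, once volumes go to infinity). So the real content is the reverse inequality $\widetilde{\miso}(M,g) \le \miso(M,g)$, under the hypothesis $\widetilde{\miso}(M,g) > 0$. The plan is to start with a sequence $\{\Omega_i\}$ witnessing $\widetilde{\miso}$ (so $|\partial^*\Omega_i|\to\infty$ and $\limsup_i \miso(\Omega_i) = \widetilde{\miso}(M,g) =: \lambda > 0$, possibly after passing to a subsequence so the $\limsup$ is an honest limit) and manufacture from it an \emph{exhaustion} $\{\Omega_i'\}$ with $\limsup_i \miso(\Omega_i') \ge \lambda$.

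The key observation is that the $\Omega_i$ need not exhaust $M$ only because they might ``miss'' a bounded piece of $M$; but since $\miso$ is positive, we should be able to enlarge each $\Omega_i$ to swallow a large coordinate ball $B_{R}$ (with $R = R_i \to \infty$ slowly) at negligible cost to the quasilocal isoperimetric mass. Concretely, I would replace $\Omega_i$ by $\Omega_i' := \widetilde{\Omega_i \cup B_{R_i}}$, the minimizing hull in $M^{\mathrm{fill}}$ (so it is allowable), for a suitable sequence $R_i \to \infty$. Taking the minimizing hull only decreases perimeter and only increases volume, hence only increases $\miso$ (as used repeatedly in the paper, e.g.\ in the proof of Theorem \ref{thm:main}), so $\miso(\Omega_i') \ge \miso(\Omega_i \cup B_{R_i})$. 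Then I need two estimates: (a) $\miso(\Omega_i \cup B_{R_i}) \ge \miso(\Omega_i) - o(1)$, and (b) $\{\Omega_i'\}$ actually exhausts $M$. For (b), since $\Omega_i' \supset B_{R_i}$ and $R_i \to \infty$, the sets $B_{R_i}$ exhaust $M$, so any compact set is eventually contained in $\Omega_i'$; one should also note $\Omega_i'$ is bounded (a minimizing hull of a bounded set in an asymptotically flat manifold is bounded — cf.\ the ``tentacle'' discussion and Lemma \ref{lemma:tentacle}). For (a), I would use that adding a set of controlled perimeter and volume to $\Omega_i$ perturbs $|\Omega|$ and $|\partial^*\Omega|$ by amounts that are $O(R_i^2)$, while $|\partial^*\Omega_i| \to \infty$ and $|\Omega_i|$ grows at least like $|\partial^*\Omega_i|^{3/2}$ (from $\miso(\Omega_i) \to \lambda > 0$, which forces $|\Omega_i| \ge \tfrac{1}{6\sqrt\pi}|\partial^*\Omega_i|^{3/2} + \tfrac{\lambda}{2}|\partial^*\Omega_i|$ eventually); so choosing $R_i \to \infty$ slowly enough that $R_i^2 / |\partial^*\Omega_i| \to 0$ makes the perturbation negligible relative to the scale set by $|\partial^*\Omega_i|$, and a short Taylor-expansion computation of $\miso$ as a function of $(|\Omega|, |\partial^*\Omega|)$ gives $\miso(\Omega_i \cup B_{R_i}) = \miso(\Omega_i) + o(1)$.

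Assembling: $\limsup_i \miso(\Omega_i') \ge \lim_i \miso(\Omega_i) - o(1) = \lambda$, and since $\{\Omega_i'\}$ is an exhaustion this gives $\miso(M,g) \ge \lambda = \widetilde{\miso}(M,g)$, completing the proof. The main obstacle, I expect, is making estimate (a) fully rigorous — in particular controlling $|\partial^*(\Omega_i \cup B_{R_i})|$ from above (it is at most $|\partial^*\Omega_i| + |\partial B_{R_i}| = |\partial^*\Omega_i| + O(R_i^2)$ by the subadditivity of perimeter under unions) and verifying that the positivity hypothesis $\widetilde{\miso} > 0$ is exactly what is needed to guarantee $|\Omega_i|$ grows fast enough for the perturbation to be lower-order; one must also handle the bookkeeping of fill-in regions so that volumes and perimeters are measured consistently (neglecting fill-in contributions as in Definition \ref{def:isop_mass}), but this is routine given the machinery already set up.
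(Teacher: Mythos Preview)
Your approach shares the paper's core idea: enlarge each $\Omega_i$ by a large compact set so that the resulting sequence exhausts $M$, and use the positivity $\miso(\Omega_i)>0$ together with $|\partial^*\Omega_i|\to\infty$ to show the enlargement perturbs $\miso$ only by $o(1)$. The paper does this in two stages---first union with a \emph{fixed} allowable $W\supset\partial M$ and estimate, then diagonalize over an exhaustion $W_1\subset W_2\subset\cdots$ of $M$---whereas you fold both stages into one by letting $R_i\to\infty$ slowly enough that $R_i^2/|\partial^*\Omega_i|\to 0$. Your perturbation estimate (a) is the same computation as the paper's; note that the paper splits into the two cases $|\partial^*\Omega_i'|\le|\partial^*\Omega_i|$ and $|\partial^*\Omega_i'|>|\partial^*\Omega_i|$, the first being immediate from positivity and the second handled by the Taylor-type expansion you sketch, and you should record the first case separately as well.

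There is, however, one genuine gap. You pass to the minimizing hull $\widetilde{\Omega_i\cup B_{R_i}}$, but the minimizing-hull machinery in this paper (Theorem~\ref{regularity-thm}, and the boundedness argument of Lemma~\ref{lemma:tentacle} you cite) is established only for \emph{smooth} metrics; here $(M,g)$ is merely $C^0$ asymptotically flat, and the paper explicitly notes that it only takes minimizing hulls with respect to smooth metrics. Furthermore, your parenthetical ``so it is allowable'' is not correct in general---minimizing hulls of allowable regions can cut through fill-in components (Figure~\ref{fig:allowable}). The good news is that this step is entirely unnecessary: simply set $\Omega_i':=\Omega_i\cup B_{R_i}$ with no hull. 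Once $R_i$ is large enough that $B_{R_i}$ contains the (compact) fill-in region, $\Omega_i'$ is automatically allowable and bounded, and since $B_{R_i}\subset\Omega_i'$ with $R_i\to\infty$ the sequence exhausts $M$. Your estimate (a) then finishes the argument directly. This is essentially what the paper does, avoiding hulls altogether.
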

\begin{proof}
We only need to prove that $\widetilde\miso(M,g) \le\miso(M,g)$ since the other inequality is immediate.
 Let $W$ be any allowable region that contains $\partial M$. 
Let $\{ \Omega_i \}_{i=1}^\infty$ be a sequence of allowable regions  such that $|\partial^*\Omega_i| \to\infty$ as $i\to\infty$, and such that $\miso(\Omega_i)>0$ for all $i$ (which can be found because $\widetilde\miso(M,g) > 0$). Define ${\Omega}_i':=  W \cup \Omega_i$, which are allowable regions. We will prove that 
\begin{equation}\label{goal}
 \limsup_{i\to\infty} \miso(\Omega_i') \ge \limsup_{i\to\infty} \miso(\Omega_i).
 \end{equation}

If it happens that $|\partial^*\Omega_i'| \le |\partial^*\Omega_i|$, then we can see that
\begin{align*}
\miso(\Omega_i') &= \frac{2}{|\partial^*\Omega_i'|} \left(|\Omega_i' | - \frac{1}{6\sqrt{\pi}} |\partial^*\Omega_i' |^{3/2}\right)\\
&\ge \frac{2}{|\partial^*\Omega_i' |} \left(|\Omega_i | - \frac{1}{6\sqrt{\pi}} |\partial^*\Omega_i |^{3/2}\right)\\
&\ge \frac{2}{|\partial^*\Omega_i|} \left(|\Omega_i | - \frac{1}{6\sqrt{\pi}} |\partial^*\Omega_i |^{3/2}\right)\\
&=\miso(\Omega_i)
\end{align*}
where we used positivity of $\miso(\Omega_i)$ in the second inequality. In the case when $|\partial^*\Omega_i'| > |\partial^*\Omega_i|$, we estimate
\begin{align*}
\miso(\Omega_i') &= \frac{2}{|\partial^*\Omega_i'|} \left(|\Omega_i' | - \frac{1}{6\sqrt{\pi}} |\partial^*\Omega_i' |^{3/2}\right)\\
&\ge \frac{2}{|\partial^*\Omega_i' |}\left(|\Omega_i | - \frac{1}{6\sqrt{\pi}}( |\partial^*\Omega_i| + |\partial^*W|)^{3/2}\right)\\
&\ge \frac{2}{|\partial^*\Omega_i' |}\left(|\Omega_i | - \frac{1}{6\sqrt{\pi}} |\partial^*\Omega_i|^{3/2} - O(|\partial^*\Omega_i|^{1/2})\right)\\
&= \frac{2}{|\partial^*\Omega_i' |}\left(|\Omega_i | - \frac{1}{6\sqrt{\pi}} |\partial^*\Omega_i|^{3/2}\right) - \frac{1}{|\partial^*\Omega_i' |} O(|\partial^*\Omega_i|^{1/2})\\
&\ge \frac{2}{|\partial^*\Omega_i | +|\partial^*W|}\left(|\Omega_i | - \frac{1}{6\sqrt{\pi}} |\partial^*\Omega_i|^{3/2}\right) -  O(|\partial^*\Omega_i|^{-1/2})\\
\end{align*}
where we used positivity of $\miso(\Omega_i)$ and the bounds $|\partial^*\Omega_i'| \le |\partial^*\Omega_i| +|\partial^* W|$ and $|\partial^*\Omega_i'| > |\partial^*\Omega_i|$ in the last line. Here, ``big $O$'' depends on $W$ but not on $i$.
Since ${(|\partial^*\Omega_i | +|\partial^*W|)^{-1}} = |\partial^*\Omega_i |^{-1} +  O(|\partial^*\Omega_i |^{-2})$, we obtain
\begin{align*}
\miso(\Omega_i') &\ge \miso(\Omega_i) -  O(|\partial^*\Omega_i|^{-1/2}) - O(|\Omega_i|\cdot |\partial^*\Omega_i|^{-2}) \\
&= \miso(\Omega_i) -  O(|\partial^*\Omega_i|^{-1/2})
\end{align*}
where the last line follows from the isoperimetric inequality for $(M,g)$. Inequality \eqref{goal} now follows. From this inequality, we conclude that as long as $\widetilde\miso(M,g)>0$, it can be computed using only sequences of regions that each contain $W$. The result now follows from a straightforward diagonalization argument, considering a sequence of sets $W$ exhausting $M$.
\end{proof}

\begin{bibdiv}
 \begin{biblist}

\bib{Ambrosio-et-al:2000}{book}{
   author={Ambrosio, Luigi},
   author={Fusco, Nicola},
   author={Pallara, Diego},
   title={Functions of bounded variation and free discontinuity problems},
   series={Oxford Mathematical Monographs},
   publisher={The Clarendon Press, Oxford University Press, New York},
   date={2000}
}

\bib{Bamler:2015}{article}{
   author={Bamler, Richard H.},
   title={A Ricci flow proof of a result by Gromov on lower bounds for scalar curvature},
   date={2015},
   eprint={http://arxiv.org/abs/1505.00088}
}

\bib{Bartnik:1997}{article}{
   author={Bartnik, Robert},
   title={Energy in general relativity},
   conference={
      title={Tsing Hua lectures on geometry \& analysis},
      address={Hsinchu},
      date={1990--1991},
   },
   book={
      publisher={Int. Press, Cambridge, MA},
   },
   date={1997},
   pages={5--27}
}

\bib{Bartnik:2002}{article}{
   author={Bartnik, Robert},
   title={Mass and 3-metrics of non-negative scalar curvature},
   conference={
      title={Proceedings of the International Congress of Mathematicians,
      Vol. II },
      address={Beijing},
      date={2002},
   },
   book={
      publisher={Higher Ed. Press, Beijing},
   },
   date={2002},
   pages={231--240}
}

\bib{Bray:1997}{article}{
   author={Bray, Hubert L.},
   title={The Penrose inequality in general relativity and volume comparison theorems involving scalar curvature},
   journal={Thesis, Stanford University},
   date={1997}
}

\bib{Bray:2001}{article}{
   author={Bray, Hubert L.},
   title={Proof of the Riemannian Penrose inequality using the positive mass
   theorem},
   journal={J. Differential Geom.},
   volume={59},
   date={2001},
   number={2},
   pages={177--267}
}

\bib{Bray-Lee:2009}{article}{
   author={Bray, Hubert L.},
   author={Lee, Dan A.},
   title={On the Riemannian Penrose inequality in dimensions less than
   eight},
   journal={Duke Math. J.},
   volume={148},
   date={2009},
   number={1},
   pages={81--106}
}

\bib{Chen-Giga-Goto:1991}{article}{
   author={Chen, Yun Gang},
   author={Giga, Yoshikazu},
   author={Goto, Shun'ichi},
   title={Uniqueness and existence of viscosity solutions of generalized
   mean curvature flow equations},
   journal={J. Differential Geom.},
   volume={33},
   date={1991},
   number={3},
   pages={749--786}
}

\bib{Colding-Minicozzi:2015}{article}{
   author={Colding, Tobias Holck},
   author = {Minicozzi II, William P.},
   title={Differentiability of the arrival time},
   date={2015},
   eprint={http://arxiv.org/abs/1501.07899}
}

\bib{Croke:1980}{article}{
   author={Croke, Christopher B.},
   title={Some isoperimetric inequalities and eigenvalue estimates},
   journal={Ann. Sci. \'Ecole Norm. Sup. (4)},
   volume={13},
   date={1980},
   number={4},
   pages={419--435}
}

\bib{Evans-Spruck:1991}{article}{
   author={Evans, L. C.},
   author={Spruck, J.},
   title={Motion of level sets by mean curvature. I},
   journal={J. Differential Geom.},
   volume={33},
   date={1991},
   number={3},
   pages={635--681}
}

\bib{Fan-Shi-Tam:2009}{article}{
   author={Fan, Xu-Qian},
   author={Shi, Yuguang},
   author={Tam, Luen-Fai},
   title={Large-sphere and small-sphere limits of the Brown-York mass},
   journal={Comm. Anal. Geom.},
   volume={17},
   date={2009},
   number={1},
   pages={37--72},
}

\bib{Gromov:2014}{article}{
   author={Gromov, Misha},
   title={Dirac and Plateau billiards in domains with corners},
   journal={Cent. Eur. J. Math.},
   volume={12},
   date={2014},
   number={8},
   pages={1109--1156}
}

\bib{Huisken:private}{article}{
     author={Huisken, Gerhard},
     journal={Private communication},
     date={2015}
}

\bib{Huisken:2006}{article}{
     author={Huisken, Gerhard},
     title={An isoperimetric concept for mass and quasilocal mass},
     journal={Oberwolfach Reports, European Mathematical Society (EMS), Z\"urich},
   date={2006},
   volume={3},
   number={1},
   pages={87--88}
}

\bib{Huisken:Morse}{article}{
     author={Huisken, Gerhard},
     title={An isoperimetric concept for the mass in general relativity},
 	 eprint={https://video.ias.edu/node/234},
     date={March 2009},
     journal={Accessed, 2015-08-31}
}

\bib{Huisken-Ilmanen:2001}{article}{
   author={Huisken, Gerhard},
   author={Ilmanen, Tom},
   title={The inverse mean curvature flow and the Riemannian Penrose
   inequality},
   journal={J. Differential Geom.},
   volume={59},
   date={2001},
   number={3},
   pages={353--437},
}

\bib{Ilmanen:1992}{article}{
   author={Ilmanen, Tom},
   title={Generalized flow of sets by mean curvature on a manifold},
   journal={Indiana Univ. Math. J.},
   volume={41},
   date={1992},
   number={3},
   pages={671--705}
}

\bib{Ilmanen:1994}{article}{
   author={Ilmanen, Tom},
   title={Elliptic regularization and partial regularity for motion by
  mean curvature},
   journal={Mem. Amer. Math. Soc.},
   volume={108},
   date={1994},
   number={520}
}

\bib{Jauregui}{article}{
   author={Jauregui, Jeffrey L.},
   title={On the lower semicontinuity of the ADM mass},
   date={2014},
   eprint={http://arxiv.org/abs/1411.3699}
}

\bib{Metzger-Schulze:2008}{article}{
   author={Metzger, Jan},
   author={Schulze, Felix},
   title={No mass drop for mean curvature flow of mean convex hypersurfaces},
   journal={Duke Math. J.},
   volume={142},
   date={2008},
   number={2},
   pages={283--312}
}

\bib{Schoen-Yau:1979}{article}{
    author={Schoen, Richard},
    author={Yau, Shing-Tung},
	title={On the proof of the positive mass conjecture in general relativity},
	journal={Comm. Math. Phys.},
	volume={65},
	year={1979},
	pages={45--76},
}

\bib{Schulze:2008}{article}{
    AUTHOR = {Schulze, Felix},
     TITLE = {Nonlinear evolution by mean curvature and isoperimetric
              inequalities},
   JOURNAL = {J. Differential Geom.},
    VOLUME = {79},
      YEAR = {2008},
    NUMBER = {2},
     PAGES = {197--241},
}

\bib{White:2000}{article}{
   author={White, Brian},
   title={The size of the singular set in mean curvature flow of mean-convex
   sets},
   journal={J. Amer. Math. Soc.},
   volume={13},
   date={2000},
   number={3},
   pages={665--695 (electronic)}
}

\bib{Witten:1981}{article}{
	author={Witten, Edward},
	title={A new proof of the positive energy theorem},
	journal={Comm. Math. Phys.},
	volume={80},
	year={1981},
	pages={381--402},
}

\end{biblist}
\end{bibdiv}

\end{document}